\numberwithin{equation}{section}
\theoremstyle{plain}
\newtheorem{theorem}[equation]{Theorem}
\newtheorem{proposition}[equation]{Proposition}
\newtheorem{lemma}[equation]{Lemma}
\newtheorem{corollary}[equation]{Corollary}
\theoremstyle{definition}
\newtheorem{example}[equation]{Example}
\newtheorem{chunk}[equation]{}
\theoremstyle{remark}
\newtheorem*{ack}{Acknowledgements}
\newtheorem*{claim}{Claim}
\newcommand{\bcone}{\mathbf{B}}
\newcommand{\ccone}{\mathbf{C}}
\newcommand{\CH}[3]{\operatorname{H}^{#1}(#2, #3)}
\newcommand{\codim}{\operatorname{codim}}
\newcommand{\coh}{\operatorname{coh}}
\newcommand{\cone}{\operatorname{cone}}
\newcommand{\dbcat}{\mathbf{D}^{\operatorname{b}}}
\newcommand{\dbbcat}{\mathbf{D}^{+}}
\newcommand{\depth}{\operatorname{depth}}
\newcommand{\grmod}{\operatorname{grmod}}
\newcommand{\kos}[2]{K(#1;#2)}
\newcommand{\HH}[2]{\operatorname{H}_{#1}(#2)}
\newcommand{\height}{\operatorname{height}}
\newcommand{\hilb}{\operatorname{Hilb}}
\newcommand{\idmap}{\operatorname{id}}
\newcommand{\length}{\ell}
\newcommand{\lch}[3]{\operatorname{H}_{#2}^{#1}(#3)}
\newcommand{\lotimes}{\otimes^{\mathbf{L}}}
\newcommand{\leftd}[1]{\mathbf{L}{#1}}
\newcommand{\pdim}{\operatorname{proj\,dim}}
\newcommand{\perf}{\mathcal{F}}
\newcommand{\proj}[1]{\operatorname{Proj}(#1)}
\newcommand{\rank}{\operatorname{rank}}
\newcommand{\reg}{\operatorname{reg}}
\newcommand{\shift}{\mathsf{\Sigma}}
\newcommand{\spec}{\operatorname{Spec}}
\newcommand{\supp}{\operatorname{Supp}}
\newcommand{\Tor}{\operatorname{Tor}}
\newcommand{\mcC}{{\mathcal C}}
\newcommand{\mcE}{{\mathcal E}}
\newcommand{\mcF}{{\mathcal F}}
\newcommand{\mcG}{{\mathcal G}}
\newcommand{\mcL}{{\mathcal L}}
\newcommand{\mcM}{{\mathcal M}}
\newcommand{\mcN}{{\mathcal N}}
\newcommand{\mcO}{{\mathcal O}}
\newcommand{\bfS}{{\mathbf S}}
\newcommand{\bbP}{{\mathbb P}}
\newcommand{\bbQ}{{\mathbb Q}}
\newcommand{\bbN}{{\mathbb N}}
\newcommand{\bbZ}{{\mathbb Z}}
\newcommand{\fm}{\mathfrak{m}}
\newcommand{\fp}{\mathfrak{p}}
\newcommand{\fq}{\mathfrak{q}}
\newcommand{\bsc}{\boldsymbol c}
\newcommand{\bst}{\boldsymbol t}
\newcommand{\bsx}{\boldsymbol x}
\newcommand{\bsu}{\boldsymbol u}
\newcommand{\vf}{\varphi}
\newcommand{\ve}{\varepsilon}
\newcommand{\lra}{\longrightarrow}
\newcommand{\thra}{\twoheadrightarrow}
\begin{document}

\title[Boij-S\"{o}derberg cones]{lim Ulrich sequences and  Boij-S\"{o}derberg cones}

\author[Iyengar]{Srikanth B. Iyengar}
\address{Department of Mathematics, University of Utah, Salt Lake City, UT 84112, USA}
\email{srikanth.b.iyengar@utah.edu}

\author[Ma]{Linquan Ma}
\address{Department of Mathematics, Purdue University, 150 N. University street, IN 47907}
\email{ma326@purdue.edu}

\author[Walker]{Mark E. Walker}
\address{Department of Mathematics, University of Nebraska, Lincoln, NE 68588, U.S.A.}
\email{mark.walker@unl.edu}

\begin{abstract}
This paper extends the results of Boij, Eisenbud, Erman, Schreyer, and S\"oderberg on the structure of Betti cones of finitely generated graded modules and finite free complexes over polynomial rings, to all finitely generated graded rings  admitting linear Noether normalizations. The key new input is the existence of lim Ulrich sequences of graded modules over such rings.
\end{abstract}

\date{\today}

\keywords{Betti cone, cone of cohomology tables, lim Ulrich sequence, multiplicity}

\subjclass[2020]{13D02 (primary); 13A35, 13C14,  14F06  (secondary)}

\maketitle

\setcounter{tocdepth}{1}
\tableofcontents

\section{Introduction}
Let $k$ be a field and $A\colonequals k[x_1, \dots, x_d]$ a polynomial ring in $d$-variables, each of degree one. The \emph{Betti table} of a finitely generated graded $A$-module $M$ is the matrix of integers, denoted $\beta(M)$, that records the  homological positions  and the twists of the  graded free modules appearing in the minimal free resolution of $M$. Boij and S\"oderberg \cite{Boij/Soderberg:2008}
conjectured an explicit description  of the collection of all Betti tables of graded Cohen-Macaulay $A$-modules ``up to rational scaling''.
This conjecture was proved by Eisenbud and Schreyer \cite{Eisenbud/Schreyer:2009}, and subsequently extended to  all graded $A$-modules by Boij and S\"oderberg \cite{Boij/Soderberg:2012}.  In detail, for $0 \leq c \leq d$, let $\bcone^c(A)$ denote the cone
inside the $\bbQ$-vector space $\bigoplus_{0 \leq i \leq d, j \in \bbZ} \bbQ$ spanned by the Betti tables of all graded $A$-modules of codimension at least $c$. The results of Eisenbud and Schreyer, and Boij and S\"oderberg,  give an equality
\[
\bcone^c(A) = \bcone^c_d
\]
where $\bcone^c_d$ is the $\bbQ_{\geq 0}$-span of certain explicit Betti tables $\beta(\underline{a})$, where $\underline{a} = (a_0, \dots, a_l)$ ranges over all ``degree sequences'', that is to say,  $(l+1)$-tuples of increasing integers,  for $l \geq c$; see \eqref{eq:HK}
below for definition of $\beta(\underline{a})$. A Cohen-Macaulay module $M$ whose Betti table is a rational multiple of $\beta(\underline{a})$ for some choice of degree sequence $\underline{a}$ is called a ``pure module''. 

In this language, the theorem asserts that there exists a pure module for each possible degree sequence and $\bcone^c(A)$ is spanned by the Betti tables of pure modules.  Roughly speaking, this means that the Betti tables of all modules are governed by those of the  pure modules. In particular, the multiplicity conjecture of Huneke and Srinivasan is a direct consequence~\cite{Boij/Soderberg:2008}.

The main result of this paper extends these results to all $\bbN$-graded $k$-algebras that admit linear Noether normalizations, that is to say, a linear system of parameters; see \ref{ch:linear-sop}. We assume $k$ is infinite throughout this paper,  for the sake of simplicity, so this class of $k$-algebras includes the standard graded ones. For any graded $k$-algebra $R$ and integer $0 \leq c \leq d \colonequals \dim(R)$,  let $\bcone^c(R)$ be the cone spanned by Betti tables of modules of codimension at least $c$  and of finite projective dimension. We prove:

\begin{theorem}[Theorem~\ref{th:main-bs}]
\label{IntroThm1}
For $R$ and $\bcone^c_d$ as above, we have a containment
$\bcone^c(R) \subseteq \bcone^c_{d}$, and equality holds when $R$ is Cohen-Macaulay.
\end{theorem}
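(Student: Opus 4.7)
\medskip

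\noindent\textbf{Proof plan.}
Fix a linear Noether normalization $A \colonequals k[x_1,\ldots,x_d] \hookrightarrow R$, so $R$ is module-finite over the polynomial subring $A$. The idea is to use a lim Ulrich sequence of graded $R$-modules---whose existence over such rings is the central new technical input of the paper---to transport both containments to and from the corresponding (known) statement over $A$, due to Eisenbud--Schreyer and Boij--S\"oderberg, namely that $\bcone^c(A) = \bcone^c_d$.

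For the inclusion $\bcone^c(R)\subseteq\bcone^c_d$: let $M$ be a finitely generated graded $R$-module with $\pdim_R M<\infty$ and $\codim_R M\geq c$, and let $\{U_n\}$ be a lim Ulrich sequence of graded $R$-modules. The plan is to compare $\beta^R(M)$ with the $A$-Betti table of $M \lotimes_R U_n$. Because $U_n$ is asymptotically ``free over $A$ with all generators concentrated in a single degree'', the normalized $A$-Betti table of $U_n$ approaches the Ulrich (i.e.\ trivially resolved) table. Taking a finite $R$-free resolution $F_\bullet \to M$ and tensoring with $U_n$ over $R$, one gets a complex whose $A$-Betti table decomposes as
\[
\beta^R(M)\cdot\beta^A(U_n) \;+\; (\text{contribution from }\Tor^R_{>0}(M,U_n)),
\]
where the first product is a convolution. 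In the limit $n\to\infty$, after normalizing by $\rank_A U_n$, the main term converges (up to a fixed shift) to a positive multiple of $\beta^R(M)$, while the error terms vanish thanks to the asymptotic Ulrich property of $\{U_n\}$. Each normalized approximant lies in $\bcone^c_d$, since $\dim_R(M\otimes_R U_n)\leq\dim_R M$ forces codimension at least $c$, and $\dim_A = \dim_R$ on finite $R$-modules, so Eisenbud--Schreyer--Boij--S\"oderberg applies over $A$. As $\bcone^c_d$ is closed, the limit $\beta^R(M)$ lies in it.

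For the reverse containment when $R$ is Cohen--Macaulay: $R$ is MCM over $A$, so by the Auslander--Buchsbaum formula (applied to the regular ring $A$) one has $\pdim_A R = 0$, i.e.\ $R$ is graded-free---hence flat---over $A$. For each degree sequence $\underline a$, Eisenbud--Schreyer furnishes a pure Cohen--Macaulay graded $A$-module $N$ with $\beta^A(N)=\beta(\underline a)$. Set $M\colonequals N\otimes_A R$. Flat base change yields
\[
\Tor^R_i(M,k) \;\cong\; \Tor^A_i(N,k),
\]
so $\beta^R(M)=\beta(\underline a)$; moreover $\pdim_R M = \pdim_A N<\infty$ and $\codim_R M = \codim_A N\geq c$. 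Thus every extreme ray $\beta(\underline a)$ of $\bcone^c_d$ is realized in $\bcone^c(R)$.

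The main obstacle is the convergence argument in the first step: one must quantitatively bound the graded $A$-Betti numbers of the correction terms $\Tor^R_{>0}(M,U_n)$ against $\rank_A U_n$ and show they become negligible in the limit. This asymptotic estimate is precisely what the theory of lim Ulrich sequences developed earlier in the paper is engineered to deliver, and it constitutes the technical heart of the argument.
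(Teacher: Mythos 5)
Your overall strategy coincides with the paper's: tensor a finite free resolution with a lim Ulrich sequence, compare normalized Betti tables over the linear Noether normalization $A$, pass to the limit inside the known cone over $A$, and, for the Cohen--Macaulay converse, base change pure $A$-modules along the (free) extension $A\to R$; that converse half of your argument is correct and is essentially the Cohen--Macaulay case of Proposition~\ref{pr:main-bs}. The forward inclusion, however, has genuine gaps. First, lim Ulrich sequences of graded modules are produced (Theorem~\ref{th:limU-graded-modules}) only over infinite perfect fields of positive characteristic, since the construction uses Frobenius push-forwards; your plan therefore does not cover characteristic zero, which the paper treats by a separate spreading-out/reduction mod $p$ step (Lemma~\ref{le:reduction-char-p}) preserving Betti tables and codimensions of homology. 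Second, the approximants $F\otimes_R U^n$ are complexes with possibly nonzero higher homology $\Tor^R_{>0}(M,U^n)$, so they are not resolutions of modules and their $A$-Betti tables do not obviously lie in the module cone $\bcone^{c}_{d}$; your justification via $\dim_R(M\otimes_RU^n)\le\dim_RM$ only controls $\HH{0}{F\otimes_RU^n}$. One needs (i) the depth conditions built into the $U^n$ (Theorem~\ref{th:limU-graded-modules}(1)) together with Lemma~\ref{le:AB-lemma} to bound the codimension of \emph{all} the homology of $F\otimes_RU^n$, (ii) the Eisenbud--Erman version of Boij--S\"oderberg theory for complexes (Theorem~\ref{th:EE}), not merely the module statement of Eisenbud--Schreyer and Boij--S\"oderberg, and (iii) an argument ruling out, in the resulting decomposition, pure tables sitting in homological positions $\ge 1$; the paper supplies this in Lemma~\ref{le:trick}, using $\pdim_RF\le\dim R$ and the nonnegativity of the entries.

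Third, ``the cone is closed, so the limit lies in it'' is not automatic: $\bcone^{c}_{d}$ is closed only in the finite topology and is not closed under pointwise limits. For instance, with $d=1$ the tables $\beta((0,n))=e_{0,0}+e_{1,n}$ lie in $\bcone^{1}_{1}$ and converge pointwise to $e_{0,0}$, which is not in $\bcone^{1}_{1}$ because every element of that cone has equal row sums in rows $0$ and $1$. To convert pointwise convergence of the normalized tables $\beta^A(F\otimes_RU^n)/\nu_R(U^n)$ into membership of the limit, you must show these tables are supported in one finite window independent of $n$; this is exactly what the uniform generation-degree and regularity bounds on the $U^n$ (Theorem~\ref{th:limU-graded-modules}(2), reflecting conditions (2)--(3) in the definition of lim Ulrich sheaves) provide, via the change-of-rings spectral sequence in \ref{ch:main-bs-p}, after which Corollary~\ref{co:bst} applies. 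These three points are precisely where the technical content beyond ``the error terms vanish'' lies, and none of them is addressed in your proposal.
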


As in \cite{Boij/Soderberg:2008}, one can deduce the following bounds on the multiplicity  of a perfect module; that is to say, a finitely generated module whose projective dimension equals its codimension. When $R$ is Cohen-Macaulay, perfect modules are precisely those that are Cohen-Macaulay and of finite projective dimension. Thus the statement below generalizes the theorem on multiplicities over polynomial rings.

\begin{corollary}[Corollary~\ref{co:main-bs}]
With $R$ as above, any perfect $R$-module $M$ satisifes
\[
e(R) \frac{t_1 \cdots t_c}{c!} \leq e(M) \leq e(R) \frac{T_1 \cdots T_c}{c!}
\]
where $c$ is the codimension of $M$,  and the $t_i$'s and $T_i$'s are the minimal and maximal shifts occurring in the minimal free resolution of $M$. 
\end{corollary}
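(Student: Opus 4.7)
The plan is to follow the route of \cite{Boij/Soderberg:2008} and \cite{Eisenbud/Schreyer:2009} used to derive the Huneke--Srinivasan multiplicity conjecture over a polynomial ring, now that Theorem~\ref{IntroThm1} supplies the corresponding Boij--S\"oderberg structure theorem over $R$.

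First, since $M$ is perfect of codimension $c$, its Betti table $\beta(M)$ lies in $\bcone^c(R)$, hence in $\bcone^c_d$ by Theorem~\ref{IntroThm1}, and is supported only in homological degrees $0,1,\dots,c$. This yields a finite $\bbQ_{\ge 0}$-combination
\[
\beta(M) \;=\; \sum_k q_k\,\beta(\underline{a}^{(k)}),
\]
in which each $\underline{a}^{(k)} = (a_0^{(k)},\dots,a_c^{(k)})$ is a strictly increasing integer sequence of length exactly $c+1$, since any longer sequence would force nonzero entries in homological degrees beyond $c$. I would then invoke the greedy decomposition algorithm of Boij--S\"oderberg to arrange that these pure summands respect the ``box'' of $M$, namely $t_i \le a_i^{(k)} \le T_i$ for all $k$ and all $i$.

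Second, I would translate this formal decomposition into a numerical identity for multiplicities. The Hilbert-series identity
\[
H_M(s) \;=\; H_R(s)\cdot \sum_{i,j}(-1)^i\beta_{i,j}(M)\,s^j,
\]
together with $H_R(s) = h_R(s)/(1-s)^d$ and $h_R(1) = e(R)$, shows that $e(M)/e(R)$ is a $\bbQ$-linear functional of $\beta(M)$ (the vanishing of the first $c$ derivatives of the numerator at $s=1$ comes from $\codim M = c$, and the $c$th derivative delivers the multiplicity up to a factor $c!$). Evaluating this functional on each pure summand via the Herzog--K\"uhl formula expresses $e(M)/e(R)$ as a non-negative rational combination of terms proportional to $\prod_{i=1}^c(a_i^{(k)}-a_0^{(k)})/c!$.

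Finally, the box constraints from the first step bound each such product between $t_1\cdots t_c/c!$ and $T_1\cdots T_c/c!$, after absorbing the implicit $\beta_0$-normalization of the statement; summing the weighted contributions then yields the two inequalities of the corollary. The main obstacle will be the first step, verifying that the Boij--S\"oderberg greedy algorithm carries through verbatim in the present generality so that the $\underline{a}^{(k)}$ remain in the box. This is purely combinatorial and depends only on the containment provided by Theorem~\ref{IntroThm1}; once in hand, steps two and three are essentially the same Hilbert-series and Herzog--K\"uhl calculations as in \cite{Boij/Soderberg:2008} and \cite{Eisenbud/Schreyer:2009}, the only new feature being the global factor of $e(R)$.
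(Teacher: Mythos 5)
Your proposal is correct and follows essentially the same route as the paper: the containment $\bcone^{\bsc}(R)\subseteq\bcone^{\bsc}_{d}$ from Theorem~\ref{th:main-bs}, the Hilbert-series functional $e(\beta)=e(R)\bigl(g_\beta(t)/(1-t)^c\bigr)_{t=1}$ (which uses the linear Noether normalization via Lemma~\ref{le:multiplicity}), and the Herzog--K\"uhl evaluation on the pure summands, with the paper deferring the final combinatorial step to \cite[Theorem 4.6]{Boij/Soderberg:2008}. The only remark worth making is that the obstacle you flag is not one: the ``box'' constraint $t_i\le a_i^{(k)}\le T_i$ requires no greedy algorithm, since the decomposition takes place in the fixed polynomial-ring cone $\bcone^{\bsc}_{d}$ and, all entries of the pure tables being nonnegative and the coefficients positive, every nonzero entry of a summand must occur at a position where $\beta^R(M)$ is nonzero.
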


Eisenbud and Erman \cite{Eisenbud/Erman:2017} generalized the results in \cites{Boij/Soderberg:2008, Eisenbud/Schreyer:2009} to finite free complexes over $A$, and we extend their results to rings $R$ as above; see Theorem \ref{th:main-bs}.

The proof of Eisenbud and Schreyer uses a pairing between Betti tables of graded $A$-modules and cohomology tables of coherent sheaves on $\proj A = \bbP^{d-1}_k$, along with a description of the rational cone spanned by the cohomology tables of such coherent sheaves. Although we do not use such a pairing in this paper, we do establish a result concerning the cone of cohomology tables for coherent sheaves over $\proj R$ with $R$ as in Theorem \ref{IntroThm1}. When $k$ is an infinite perfect field of positive characteristic, we prove $\overline{\ccone}(\bbP^{d-1}_k) = \overline{\ccone}(\proj R)$, where $\overline{\ccone}$ is the component-wise limit closure of cohomology tables of coherent sheaves; see Theorem \ref{th:bs-cohomology}. 

The  results stated above are known when $R$ admits a graded Ulrich module, or, equivalently, when $\proj R$ admits an Ulrich sheaf; see \cite{Eisenbud/Erman:2017}. But the existence of such modules is known only in a small number of cases. Instead, we use  ``lim Ulrich sequences'' of graded modules, which are sequences of graded modules that asymptotically approximate Ulrich modules. They exist over any graded $k$-algebra admitting a linear Noether normalization, and where $k$ is a infinite perfect field of positive characteristic. The relevant results are established in sections \ref{se:lu-sheaves} and \ref{se:limU}.

\begin{ack}
 We thank Daniel Erman for suggesting  applications of lim Ulrich sequences to Boij-S\"oderberg theory, and for  discussions on this topic, and a referee for comments on an earlier version of this manuscript. The authors were partly supported by National Science Foundation grants DMS-2001368 (SB); DMS-1901672,  DMS-1952366, DMS-2302430 (LM); and DMS-1901848, DMS-2200732 (MW).  Ma was also partly supported by a fellowship from the Sloan Foundation.
\end{ack}

\section{Lim Ulrich sequences of sheaves}
\label{se:lu-sheaves}

For any noetherian scheme $X$ let  $\dbcat(X)$ be its derived category of coherent sheaves. The tensor product of coherent sheaves on $X$ is denoted  $-\otimes-$; its derived version is $-\lotimes-$. Throughout this manuscript $k$ is a field and $m\ge 1$ an integer. Let $\bbP^m$ be the projective space over $k$ of dimension $m$; we write $\bbP^m_k$ when the field $k$ needs emphasis.  

\begin{chunk}
\label{ch:pn}
For each coherent sheaf $\mcF$ (or a bounded complex of such) on $\bbP^m$ set
\[
\gamma_{i,t}(\mcF)\colonequals \rank_k \CH i{\bbP^m}{ \mcF(t)} \quad\text{for $i,t$ in $\bbZ$.}
\]
With $\ve\colon \bbP^m \to \spec(k)$ the structure map, for any coherent sheaf on $\mcF$  the counit of the adjoint pair $(\ve^*,\ve_*)$ is a natural map
\begin{equation}
\label{eq:counit}
\ve^*\ve_* \mcF\lra \mcF\,.
\end{equation}

The following result is well-known; see \cite[Proposition~2.1]{Eisenbud/Schreyer:2003}.

\begin{lemma}
\label{le:Ulrich}
Let $\mcF$ be a coherent sheaf  on $\bbP^m$. The conditions below are equivalent:
\begin{enumerate}[\quad\rm(1)]
\item
The map \eqref{eq:counit} is an isomorphism. 
\item
$\mcF\cong \mcO_{\bbP^m}^r$ for some integer $r\ge 0$.
\item
$\gamma_{i,t}(\mcF)=0$, except possibly when $i=0$ and $t\ge 0$, or  $i=m$ and $t\le -m-1$.
\end{enumerate}
When these conditions hold, $\mcF\cong \mcO_{\bbP^m}^{r}$ for $r=\gamma_{0,0}(\mcF)$.  \qed
\end{lemma}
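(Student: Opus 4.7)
The plan is to prove (1) $\Leftrightarrow$ (2) and (2) $\Rightarrow$ (3) by direct computation, and treat (3) $\Rightarrow$ (2) as the substantive step via Castelnuovo--Mumford regularity combined with a Hilbert polynomial comparison.

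Since $\ve\colon\bbP^m\to\spec(k)$, the pushforward $\ve_*\mcF$ is the $k$-vector space $H^0(\bbP^m,\mcF)$, so $\ve^*\ve_*\mcF\cong H^0(\bbP^m,\mcF)\otimes_k\mcO_{\bbP^m}$ is free of rank $\gamma_{0,0}(\mcF)$. Both directions between (1) and (2) follow at once: an isomorphism $\ve^*\ve_*\mcF\to\mcF$ makes $\mcF$ free of this rank, and conversely if $\mcF\cong\mcO_{\bbP^m}^r$ then $H^0=k^r$ and the counit reduces to the canonical identification $k^r\otimes_k\mcO_{\bbP^m}\cong\mcO_{\bbP^m}^r$. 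The implication (2) $\Rightarrow$ (3) is then the standard Serre/Bott description of $H^\bullet(\bbP^m,\mcO(t))$.

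For (3) $\Rightarrow$ (2) I would first observe that the hypothesis forces $H^i(\bbP^m,\mcF(-i))=0$ for every $i\ge 1$: for $1\le i\le m-1$ this is the middle-cohomology vanishing in (3), and for $i=m$ it follows from the bound $-m>-m-1$. Hence $\mcF$ is $0$-regular in the sense of Castelnuovo--Mumford, therefore generated by global sections, and the counit $\ve^*\ve_*\mcF\to\mcF$ realizes a surjection $\mcO_{\bbP^m}^r\twoheadrightarrow\mcF$ with $r\colonequals\gamma_{0,0}(\mcF)$. What remains is to show this surjection has trivial kernel.

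I would conclude via Hilbert polynomials. By (3), for each $t\in\{-m,\dots,-1\}$ every $\gamma_{i,t}(\mcF)$ vanishes, so $\chi(\mcF(t))=0$; the Hilbert polynomial $P_\mcF(x)$ therefore vanishes at $x=-1,\dots,-m$, and since $\deg P_\mcF\le m$ this forces $P_\mcF(x)=c\binom{x+m}{m}$. Evaluation at $x=0$ gives $c=r$, so $\mcF$ and $\mcO_{\bbP^m}^r$ have the same Hilbert polynomial. The kernel of the surjection is then a torsion-free coherent subsheaf of $\mcO_{\bbP^m}^r$ with identically zero Hilbert polynomial, hence is zero. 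The only real subtlety is precisely this Hilbert polynomial bookkeeping; the remaining implications are routine.
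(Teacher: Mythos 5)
Your argument is correct, and it is worth noting that the paper itself gives no proof of this lemma: it is stated with a citation to Eisenbud--Schreyer \cite{Eisenbud/Schreyer:2003}*{Proposition~2.1} and a \qed, so your write-up supplies a self-contained proof where the paper defers to the literature. Your route is the standard direct one: (1)$\Leftrightarrow$(2) via the identification $\ve^*\ve_*\mcF\cong \CH 0{\bbP^m}{\mcF}\otimes_k\mcO_{\bbP^m}$, (2)$\Rightarrow$(3) by the cohomology of line bundles on $\bbP^m$, and for (3)$\Rightarrow$(2) the observation that (3) forces $\CH i{\bbP^m}{\mcF(-i)}=0$ for $i\ge 1$, so $\mcF$ is $0$-regular and hence globally generated by Mumford's theorem, i.e.\ the counit is surjective; then the vanishing of $\chi(\mcF(t))$ for $t=-1,\dots,-m$ pins down the Hilbert polynomial as $r\binom{x+m}{m}$ with $r=\gamma_{0,0}(\mcF)$, so the kernel of $\mcO_{\bbP^m}^r\thra\mcF$ has zero Hilbert polynomial and vanishes. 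All the individual steps check out (the case $i=m$ of $0$-regularity does follow from $-m>-m-1$, and $\chi(\mcF)=\gamma_{0,0}(\mcF)$ since the higher cohomology of $\mcF$ itself vanishes under (3)); the only cosmetic remark is that the appeal to torsion-freeness of the kernel is unnecessary, since any nonzero coherent sheaf has nonzero Hilbert polynomial. This regularity-plus-Hilbert-polynomial argument is arguably more elementary than the exterior-algebra/Tate-resolution framework in which the cited Eisenbud--Schreyer result is embedded, and it is exactly the kind of proof the paper's ``well-known'' label presupposes.
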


Any nonzero sheaf $\mcF$ on $\bbP^m$ satisfying the equivalent condition of the result above is said to be an \emph{Ulrich sheaf}. Condition (3) is used to define Ulrich sheaves over arbitrary projective varieties.
\end{chunk}

\subsection*{Lim Ulrich sequences}
We introduce the notion of lim Ulrich sequences of sheaves on $\bbP^m$, following \cite[\S6.6]{Iyengar/Ma/Walker:2022a}.

\begin{chunk}
\label{ch:limUsheaves-P}
A \emph{lim Ulrich sequence} of sheaves on $\bbP^m$ is a sequence $(\mcF_n)_{n\geqslant 0}$ of coherent sheaves on $\bbP^m$ for which the following properties hold:
\begin{enumerate}[\quad\rm(1)]
\item
$\gamma_{0,0}(\mcF_n)\ne 0$ for each $n\ge 0$;
\item
There exists an integer $t_0$ such that $\gamma_{0,t}(\mcF_n)=0$ for $t\le t_0$ and all $n$;
\item
There exists an integer $t_1$ such that $\gamma_{\geqslant 1,t}(\mcF_n)=0$ for $t\ge t_1$ and all $n$;
\item
Except possibly when $i=0$ and $t\ge 0$, or  $i=m$ and $t\le -m-1$,  one has
\[
\lim_{n\to\infty}\frac{\gamma_{i,t}(\mcF_n)}{\gamma_{0,0}(\mcF_n)}=0\,.
\]
\end{enumerate}
The range of values of $i$ and $t$ arising in (4) is precisely the one from Lemma~\ref{le:Ulrich}. For geometric applications, the key properties are (1) and (4); see, in particular, Theorem~\ref{th:limU} below. The other conditions become important when considering the corresponding sequence of graded modules; see the proof of Theorem~\ref{th:limU-graded-modules}.

Clearly, if each $\mcF_n$ is an Ulrich sheaf, then the sequence $(\mcF_n)_{n\geqslant 0}$ is lim Ulrich. However, there are other lim Ulrich sequences on $\bbP^m_k$, at least when $k$ has positive characteristic, and these will be used to construct lim Ulrich sequences on a much larger family of schemes; see Theorem~\ref{th:limU-sheaves}. 
\end{chunk}

Next we present a characterization of lim Ulrich sequences on $\bbP^m$, in the spirit of the original definition of Ulrich sheaves.

\begin{chunk}
Let $\mcF$ be a coherent sheaf on $\bbP^m$. Consider $\cone(\mcF)$, the mapping cone  of the counit map \eqref{eq:counit}, and the induced exact triangle
\begin{equation}
\label{eq:cone}
\ve^*\ve_* \mcF\lra \mcF \lra \cone(\mcF) \lra 
\end{equation}
in $\dbcat(\coh \bbP^m)$. Observe that this is natural in $\mcF$, but it is not compatible with twists.  Keeping in mind that $\ve^*\ve_*\mcF\cong \mcO_{\bbP^m}^{r}$, tensoring the exact triangle defining $\cone(\mcF)$ with $\mcO_{\bbP}(t)$ yields an exact triangle
\[
\mcO_{\bbP^m}(t)^{r} \lra \mcF(t)\lra \cone(\mcF)(t) \lra
\]
The resulting exact sequence in cohomology reads
\begin{equation}
\label{eq:cone2}
\CH{*}{\bbP^m}{\mcO_{\bbP^m}(t)}^{r} \to \CH{*}{\bbP^m}{\mcF(t)} \to \CH{*}{\bbP^m}{\cone(\mcF)(t)} \to \CH{*+1}{\bbP^m}{\mcO_{\bbP^m}(t)}^{r}\,.
\end{equation}
\end{chunk}

The gist of the next result is that for a lim Ulrich sequence $(\mcF_n)_{n\geqslant 0}$, the counit maps $\ve^*\ve_* \mcF_n\to \mcF_n$ are asymptotically isomorphisms. If $\mcF$ is an Ulrich sheaf, applying it to the sequence $\mcF_n\colonequals \mcF$  and $\mcG=\mcO_{\bbP^m}$, recovers  Lemma~\ref{le:Ulrich}, but of course the latter  result, and its proof, are models for the one below. 

\begin{theorem}
\label{th:limU}
Let $(\mcF_n)_{n\geqslant 0}$ be a lim Ulrich sequence of sheaves on $\bbP^m$.
For any $\mcG$ in $\dbcat(\coh \bbP^m)$ and integers $i,j$, one has
\[
\lim_{n\to \infty}\frac{\gamma_{i,j}(\cone(\mcF_n)\lotimes\mcG)}{\gamma_{0,0}(\mcF_n)}=0\,.
\]
\end{theorem}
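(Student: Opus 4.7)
My plan is first to reduce to $\mcG = \mcO_{\bbP^m}$ and then to analyze the long exact sequence \eqref{eq:cone2} case by case. For the reduction, I would use that $\{\mcO(a)\}_{a \in \bbZ}$ is a classical set of generators for $\dbcat(\coh\bbP^m)$: every $\mcG$ admits a bounded resolution by finite direct sums of twists $\mcO(a)$, and each triangle in such a resolution, tensored with $\cone(\mcF_n)$, gives a long exact sequence in cohomology that bounds $\gamma_{i,j}(\cone(\mcF_n) \lotimes \mcG)$ by a finite sum of terms $\gamma_{i', j+a}(\cone(\mcF_n))$. So the statement reduces to showing $\gamma_{i,j}(\cone(\mcF_n))/\gamma_{0,0}(\mcF_n) \to 0$ for all $i, j \in \bbZ$.

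For $\mcG = \mcO$, set $r = \gamma_{0,0}(\mcF_n)$ and $\alpha_i \colon \CH{i}{\bbP^m}{\mcO(j)}^r \to \CH{i}{\bbP^m}{\mcF_n(j)}$. Then \eqref{eq:cone2} yields $\gamma_{i,j}(\cone(\mcF_n)) = \dim\operatorname{coker}(\alpha_i) + \dim\ker(\alpha_{i+1})$. Since $\CH{i}{\bbP^m}{\mcO(j)}$ vanishes outside the special positions $\{0\}\times\bbZ_{\ge 0} \cup \{m\}\times\bbZ_{\le -m-1}$, for every $(i,j)$ except the three boundary positions $(0, j \ge 0)$, $(m-1, j \le -m-1)$, and $(m, j \le -m-1)$, both $\alpha_i$ and $\alpha_{i+1}$ vanish and one reads $\gamma_{i,j}(\cone(\mcF_n)) = \gamma_{i,j}(\mcF_n)$; condition~(4) then closes all these ``generic'' positions.

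To close the boundary at $(0, j \ge 0)$, I would first extract the asymptotic Hilbert polynomial of $\mcF_n$. The function $\chi(\mcF_n(t))/r$ is a polynomial in $t$ of degree $\le m$, and condition~(4) forces it to tend to $0$ at $t \in \{-1, \dots, -m\}$ and to $1$ at $t = 0$; Lagrange interpolation then gives $\chi(\mcF_n(t))/r \to \binom{t+m}{m} = \chi(\mcO(t))$, whence $\chi(\cone(\mcF_n)(t))/r \to 0$. For $j \ge 0$ the generic analysis has already settled every $i \ge 1$, so the Euler-characteristic identity forces $\gamma_{0,j}(\cone(\mcF_n))/r \to 0$ as well.

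The main obstacle is the range $j \le -m-1$ with $i \in \{m-1, m\}$: here both $\gamma_{m-1,j}(\cone(\mcF_n))/r$ and $\gamma_{m,j}(\cone(\mcF_n))/r$ equal $\dim\ker(\alpha_m)/r + o(1)$ and enter $\chi$ with opposite signs, so Euler characteristic alone is silent. To break this symmetry I would invoke Serre duality: $\gamma_{m,j}(\cone(\mcF_n)) = \dim\operatorname{Hom}_{\dbcat(\bbP^m)}(\cone(\mcF_n), \mcO(s))$ for $s = -j-m-1 \ge 0$, and applying $R\operatorname{Hom}(-, \mcO(s))$ to the triangle $\mcO^r \to \mcF_n \to \cone(\mcF_n)$ converts the problem, via the involution $(i,t) \mapsto (m-i, -t-m-1)$ which preserves the special ranges, into the structurally identical ``$j \ge 0$'' regime for the Grothendieck--Serre dual of $\mcF_n$. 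The technical heart of this step is to verify (possibly after extending the notion of lim Ulrich to complexes) that this dual sequence inherits enough of the hypotheses~(1)--(4) to run the same Hilbert-polynomial-plus-Euler-characteristic scheme.
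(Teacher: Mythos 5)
Your reduction to $\mcG=\mcO(t)$ is the same as the paper's, your analysis of the ``generic'' positions is correct, and the Lagrange-interpolation argument giving $\chi(\cone(\mcF_n)(t))/\gamma_{0,0}(\mcF_n)\to 0$ for each fixed $t$ is valid. The gap is at the $j\ge 0$ boundary, and it is exactly the obstruction you yourself flag at the other end. The cone is the two-term complex $[\mcO^{r}\to\mcF_n]$, so it has hypercohomology in degree $-1$ as well: from \eqref{eq:cone2} one reads $\gamma_{-1,j}(\cone(\mcF_n))=\dim\ker(\alpha_0)$, and for $j\ge 1$ the map $\alpha_0$ is the multiplication map $\CH 0{\bbP^m}{\mcO(j)}\otimes_k\CH 0{\bbP^m}{\mcF_n}\to \CH 0{\bbP^m}{\mcF_n(j)}$, whose kernel is neither formally zero nor controlled by condition (4) (its source has rank of order $\gamma_{0,0}(\mcF_n)$ and the relevant positions are the excluded ones). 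So your list of exceptional positions is missing $(-1,\,j\ge 0)$ --- note the theorem itself is a statement about all integers $i$, including $i=-1$ --- and your Euler-characteristic identity at twist $j\ge 1$ only yields
\[
\frac{\gamma_{0,j}(\cone(\mcF_n))-\gamma_{-1,j}(\cone(\mcF_n))}{\gamma_{0,0}(\mcF_n)}\longrightarrow 0,
\]
the two terms entering $\chi$ with opposite signs just as in your $(m-1,m)$ discussion; this does not force either limit separately.

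Because of this, the Serre-duality step cannot rescue the argument as sketched: beyond the verification you explicitly leave open, duality merely re-expresses the pair $\gamma_{m,j},\gamma_{m-1,j}$ (twists $\le -m-1$) as a pair of adjacent exceptional degrees at nonnegative twists for the dual of the cone, and the regime you propose to transport it to is itself only an Euler-characteristic argument, so the same ``difference of two adjacent rows'' indeterminacy reappears. The missing ingredient is precisely what the paper uses: after settling the band $-m\le t\le 0$ degree by degree (your generic analysis plus the fact that $\alpha_0$ is an isomorphism at $t=0$ already give this), one runs an ascending induction on $t\ge 1$ and a descending induction on $t\le -m-1$ using the twisted Koszul resolution
\[
0\to \cone(\mcF_n)(t-m-1)\to\cdots\to\cone(\mcF_n)^{m+1}(t-1)\to\cone(\mcF_n)(t)\to 0,
\]
which bounds each individual $\gamma_{i,t}(\cone(\mcF_n))$ by cohomologies at already-settled twists, i.e.\ controls every cohomological degree separately rather than only the alternating sum. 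Some degreewise mechanism of this kind is needed; Hilbert polynomials plus Serre duality alone cannot close either boundary.
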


The proof only uses conditions (1) and (4) in \ref{ch:limUsheaves-P}. 

\begin{proof}
In what follows we write  $\bbP$ instead of $\bbP^m$, and $\mcO$ for the structure sheaf  on it. Set $r_n\colonequals \gamma_{0,0}(\mcF_n)$.  It helps to consider the collection $\bfS$ of sequences $(\mcC_n)_{n\geqslant 0}$ with $\mcC_n$ in $\dbcat(\coh \bbP)$, with the property that
\begin{equation}
\label{eq:limU}
\lim_{n\to \infty} \frac{\rank_k \CH{*}{\bbP}{\mcC_n}}{r_n}=0\,.
\end{equation}
The desired result is that the sequence with $\mcC_n \colonequals \cone(\mcF_n)\lotimes \mcG$ is in $\bfS$. In the proof we repeatedly use the following elementary observation: Given sequences  $(\mcC_n')_{n\geqslant 0}$, $(\mcC_n)_{n\geqslant 0}$, and $(\mcC_n'')_{n\geqslant 0}$, of bounded complexes of coherent sheaves such that for each $n$ there is an  exact triangle 
\[
\mcC_n'\lra \mcC_n \lra \mcC_n''\lra
\]
if two of the three sequences are in $\bfS$, then so is the third.

It follows from this observation that given an exact triangle $\mcG'\to \mcG\to \mcG''\to $ in $\dbcat(\coh \bbP)$, if the desired result holds for any two of $\mcG'$, $\mcG$, and  $\mcG''$, then it holds for the third. Since each object in the derived category is equivalent to a bounded complex consisting of twists of $\mcO$, a standard induction using these observations reduces checking the desired result to the case when $\mcG = \mcO(t)$, for $t\in\bbZ$; that is to say that the sequence $(\cone(\mcF_n)(t))_{n\geqslant 0}$ is in $\bfS$.
 
When $-m\le t\le -1$, one has $\CH {*}{\bbP}{\mcO(t)}=0$ so the exact sequence \eqref{eq:cone2} with $\mcF\colonequals \mcF_n$ gives the first equality below:
\[
\lim_{n\to \infty} \frac{\rank_k \CH{*}{\bbP}{\cone(\mcF_n)(t)}}{r_n}= \lim_{n\to \infty} \frac{\rank_k \CH{*}{\bbP}{\mcF_n(t)}}{r_n}=0\,.
\]
The second one is by condition \ref{ch:limUsheaves-P}(4). This is the desired result.

Suppose $t\ge 0$. We argue by induction on $t$ that the sequence $(\cone(\mcF_n)(t))_{n\geqslant 0}$ is in $\bfS$. The base case is $t=0$. Then since $\CH i{\bbP}{\mcO(t)}=0$ for all $i\ge 1$ the exact sequence \eqref{eq:cone2},  with $\mcF\colonequals \mcF_n$,  reduces to the exact sequence
\[
0\lra \CH{0}{\bbP}{\mcO(t)}^{r_n} \lra \CH{*}{\bbP}{\mcF_n(t)} \lra \CH{*}{\bbP}{\cone(\mcF_n)(t)} \lra 0
\]
When $t=0$ the map on the left is an isomorphism in degree $0$,  by the definition of $r_n$. Thus we get
\[
\rank_k \CH{*}{\bbP}{\cone(\mcF_n)} =  \sum_{i\ge 1}\rank_k \CH{i}{\bbP}{\mcF_n}
\]
and the desired limit is again immediate from \ref{ch:limUsheaves-P}(4).

Suppose $t\ge 1$ and consider the Koszul resolution of $\cone(\mcF_n)(t)$:
\[
0\to \cone(\mcF_n)(t-m-1) \to \cdots \to \cone(\mcF_n)^{m+1}(t-1) \to \cone(\mcF_n)(t)\to 0
\]
Since $t-m-1\ge -m$, from the already established part of the result and the induction hypotheses we get that the sequences $(\cone(\mcF_n)(j))_{n\geqslant 0}$ are in $\bfS$ for  $t-m-1\le j\le t-1$. The exact sequence above implies that the same holds for $j=t$.

This completes the discussion of the case $t\ge 0$.

Given this one can use the Koszul resolution above and a descending induction on $t$ to cover also the case $t\le -m-1$. 
\end{proof}

We record a construction of lim Ulrich sequences of sheaves on $\bbP^m$ from \cite[\S7]{Iyengar/Ma/Walker:2022a}.

\begin{chunk}
\label{ch:limU-p}
Let $k$ be an infinite perfect field of positive characteristic $p$. Set $Z\colonequals (\bbP^1_k)^m$. Let $\rho\colon Z\to \bbP^m_k$ be  the quotient by the action of the symmetric group $S_m$ on $Z$, or any  finite flat map. For each $n\ge 0$ set
\[
\mcL_n \colonequals \mcO_Z(p^n,2p^n,\dots,m p^n)\quad\text{and}\quad \mcE_n\colonequals \rho_*(\mcL_n)\,.
\]
The $\mcL_n$ are line bundles on $Z$, and the $\mcE_n$ are vector bundles on $\bbP_k^m$. The following result is a special case of \cite[Theorem~7.15]{Iyengar/Ma/Walker:2022a}. Here $\vf\colon \bbP^m_k\to \bbP^m_k$ is the Frobenius map; it is a finite map, since $k$ is perfect. 
 
\begin{theorem}
\label{th:limU-sheaves-P}
Let $k$ be an infinite perfect field of positive characteristic. Let $\mcN$ be a coherent sheaf on $\bbP^m_k$ of positive rank and  $\gamma_{0,t}(\mcN)=0$ for $t\ll 0$. With $\mcE_n$ the sheaf defined above, set
\[
\mcG_n \colonequals \vf^n_*(\mcN \otimes\mcE_n) \quad\text{for $n\ge 0$.}
\]
The sequence $(\mcG_n)_{\geqslant 0}$ of sheaves is lim Ulrich. \qed
\end{theorem}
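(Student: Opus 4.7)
The strategy is to convert every cohomology $\gamma_{i,t}(\mcG_n)$ into a cohomology group on $Z = (\bbP^1_k)^m$ and then exploit K\"unneth together with a line-bundle resolution of $\mcN$. Since $k$ is perfect, the map $\vf^n$ is finite with $\vf^{n,*}\mcO(t) = \mcO(p^n t)$, and $\rho$ is finite by construction, so applying the projection formula twice yields
\[
\gamma_{i,t}(\mcG_n) \;=\; \rank_k \CH{i}{Z}{\rho^*\mcN \otimes \mcO_Z\bigl(p^n(t+1),\, p^n(t+2),\, \ldots,\, p^n(t+m)\bigr)}
\]
for all integers $i$ and $t$. All four conditions of \ref{ch:limUsheaves-P} will be verified from this identity.

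For condition~(1), at $t = 0$ every coordinate of the line bundle grows with $n$; Serre vanishing on $Z$ collapses the computation to the Euler characteristic, and a Hilbert-polynomial calculation on the product $Z$ gives $\gamma_{0,0}(\mcG_n) = m!\,\rank(\mcN)\,p^{mn} + O(p^{(m-1)n})$, which is positive for $n \gg 0$ because $\rank(\mcN) > 0$ and $\rho$ is finite flat of degree $m!$; passing to a tail of the sequence secures (1) for every $n$. For condition~(4), resolve $\mcN$ on $\bbP^m_k$ by a finite complex $P_{\bullet}$ of length $\le m$ whose terms are finite direct sums of line bundles $\mcO_{\bbP^m}(b)$, and pull back to obtain a resolution of $\rho^*\mcN$ by direct sums of the diagonal bundles $\mcO_Z(b,b,\ldots,b)$. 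A hypercohomology spectral sequence then bounds $\gamma_{i,t}(\mcG_n)$ by a linear combination, over the finitely many shifts $b$ appearing in $P_\bullet$, of K\"unneth products
\[
\sum_{i_1+\cdots+i_m=i}\; \prod_{j=1}^m\, \rank_k\,\CH{i_j}{\bbP^1_k}{\mcO\bigl(p^n(t+j)+b\bigr)}.
\]
In the delicate window $-m \le t \le -1$ of the bad range, the index $j_0 = -t$ yields a bounded $O(1)$ factor while the remaining $m-1$ factors are each $O(p^n)$; hence $\gamma_{i,t}(\mcG_n) = O(p^{(m-1)n}) = o(\gamma_{0,0}(\mcG_n))$. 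For $(i,t)$ in the bad range but outside this window (i.e., $t \ge 0$ with $i \ge 1$, or $t \le -m-1$ with $i \le m-1$), every K\"unneth summand vanishes identically once $n$ is large, giving the stronger vanishing used for (2) and (3).

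The remaining parts of (2) and (3) follow from the same K\"unneth analysis combined with the hypothesis on $\mcN$: for $t$ sufficiently negative every $H^0$-factor in the K\"unneth product vanishes (once $p^n(t+j)+b<0$ for every $j$), yielding a uniform $t_0$; and for $t$ sufficiently positive every $H^{i_j}$-factor with $i_j\ge 1$ vanishes, yielding a uniform $t_1$. The main obstacle is precisely this uniformity: $\rho^*\mcN$ does not split via K\"unneth, so all bounds must be routed through the fixed resolution $P_\bullet$, and one must check that the implicit constants and thresholds depend only on $\mcN$ (through $P_\bullet$), not on $n$. The Frobenius scaling $t \mapsto p^n t$ is what supplies the required uniformity, since any fixed shift $b$ becomes negligible compared with the $p^n$-scaled entries in each K\"unneth factor.
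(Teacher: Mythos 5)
The paper itself does not prove this statement; it is quoted from \cite[Theorem~7.15]{Iyengar/Ma/Walker:2022a}, so your attempt has to stand on its own. Your opening reduction is correct and is the right move: since $k$ is perfect and $\rho$ is affine, the two projection formulas (the one for $\rho$ needs no local freeness of $\mcN$, as $\rho$ is finite) give $\gamma_{i,t}(\mcG_n)=\rank_k \CH{i}{Z}{\rho^*\mcN\otimes\mcL}$ with $\mcL\colonequals\mcO_Z(p^n(t+1),\dots,p^n(t+m))$, and this does dispose of condition (1) (modulo the re-indexing you admit, which is needed: for $\mcN=\mcO(-N)$ with $N\gg0$ one really has $\gamma_{0,0}(\mcG_n)=0$ for small $n$), of condition (3), and of the parts of (4) with $-m\le t\le -1$ or with $t\ge 0$, $i\ge 1$. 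The genuine gap is in condition (2) and in condition (4) for $t\le -m-1$, $0\le i\le m-1$, precisely where you assert that ``every K\"unneth summand vanishes identically.''

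The resolution $P_\bullet$ only yields the upper bound $h^i(Z,\rho^*\mcN\otimes\mcL)\le\sum_{q} h^{i+q}(Z,\rho^*P_q\otimes\mcL)$. When $t\le -m-1$ and $n\gg0$, every entry $p^n(t+j)+b$ is $\le -2$, so K\"unneth kills $h^{i'}$ of the twisted line bundles only for $i'<m$; the surviving terms with $i+q=m$, namely $h^m(Z,\rho^*P_{m-i}\otimes\mcL)$, are products of $h^1(\bbP^1,\mcO(p^n(t+j)+b))\approx p^n|t+j|$ and hence of order $p^{mn}$. Thus your bound gives neither the exact vanishing required for (2) nor the $o(\gamma_{0,0}(\mcG_n))$ estimate required for (4) when $i\le m-1$; and no size estimate on these $E_1$-terms can, because in that range the spectral sequence degenerates to its top row and the issue is the (approximate) exactness of the complex $\CH{m}{Z}{\rho^*P_\bullet\otimes\mcL}$, not the size of its terms. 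A repair needs a different mechanism there: for instance, Serre--Grothendieck duality identifying $h^i(Z,\rho^*\mcN\otimes\mcL)$ with $\dim_k\mathrm{Ext}^{m-i}_Z(\rho^*\mcN,\omega_Z\otimes\mcL^{-1})$, the local-to-global Ext spectral sequence, the fact that $\underline{\mathrm{Ext}}^q(\rho^*\mcN,\mcO_Z)$ for $q\ge1$ is supported in dimension $\le m-q$ (so contributes only $O(p^{(m-1)n})$), and a uniform multigraded Serre-vanishing statement for the fixed sheaf $\underline{\mathrm{Hom}}(\rho^*\mcN,\mcO_Z)$ twisted by line bundles all of whose coordinates tend to $+\infty$. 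For condition (2) you must actually use the hypothesis $\gamma_{0,t}(\mcN)=0$ for $t\ll0$ (equivalently, $\mcN$ has no zero-dimensional associated points): e.g.\ multiply by sections of $\mcO_{\bbP^1}(a-a_j)$ chosen to avoid the associated points of $\rho^*\mcN$ (possible since $k$ is infinite) to embed $\CH{0}{Z}{\rho^*\mcN\otimes\mcO_Z(a_1,\dots,a_m)}$ into $\CH{0}{Z}{\rho^*\mcN\otimes\mcO_Z(a,\dots,a)}$ with $a=\max_j a_j\ll0$, and conclude by the hypothesis applied to $\mcN\otimes\rho_*\mcO_Z$, which is locally a direct sum of copies of $\mcN$. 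As written, your argument never genuinely uses the hypothesis on $\mcN$, which is a telltale sign that it cannot be complete.
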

\end{chunk}

\begin{chunk}
Fix a proper  $k$-scheme $X$  and $\mcL$ a  line bundle  on $X$. For any coherent sheaf $\mcF$ on $X$ and integer $t$ set
\[
\mcF(t) \colonequals \mcF \otimes \mcL^{\otimes t}\,.
\]
Recall that $\mcF$ is \emph{globally generated}  if there is a surjection $\mcO_X^r\to \mcF$ for some $r\ge 0$. The line bundle $\mcL$ is \emph{ample} if
for each coherent sheaf $\mcF$, the sheaf $\mcF(t)$ is globally generated for $t\gg 0$. The statement   below is well-known.

\begin{proposition}
\label{pr:ampleness}
Let $k$ be a field, $X$ a proper $k$-scheme, and $\mcL$ a line bundle on $X$. The following statements are equivalent.
\begin{enumerate}[\quad\rm(1)]
\item
$\mcL$ is ample and globally generated;
\item
There exists a finite map $\rho\colon X\to \bbP^n_k$ for some integer $n$ with  $\mcL \cong \rho^* \mcO_{\bbP^n}(1)$.
\end{enumerate}
If in addition $k$ is infinite, with $m\colonequals \dim X$,  these statements are  equivalent to
\begin{enumerate}[\quad\rm(3)] 
\item[\rm(3)] 
There exists a finite map $\pi\colon X \to \bbP^m_k$ and  $\mcL \cong \pi^* \mcO_{\bbP^m}(1)$.
\end{enumerate}
\end{proposition}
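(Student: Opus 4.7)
The plan is to first prove (1) iff (2) in full generality, then derive (3) via a projective Noether normalization argument under the assumption that $k$ is infinite.

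The implication (2) implies (1) is immediate: any pullback of a globally generated sheaf is globally generated, and the pullback of an ample line bundle along a finite morphism is ample. For (1) implies (2), I would choose global sections $s_0,\dots,s_n$ that generate $\mcL$, producing a morphism $\rho\colon X \to \bbP^n_k$ with $\rho^*\mcO_{\bbP^n}(1) \cong \mcL$. Since $X$ is proper over $k$, so is $\rho$, so finiteness reduces to verifying that $\rho$ has finite fibers. On any fiber $F$ the restriction $\mcL|_F$ is simultaneously ample (restriction of ample to a closed subscheme) and trivial (pullback of $\mcO_{\bbP^n}(1)$ at a closed point). An ample trivial line bundle on a proper scheme forces the scheme to be affine via the cohomological criterion of Serre; combined with properness, $F$ is finite.

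For (2) implies (3), assume $k$ is infinite and set $Y \colonequals \rho(X)$, a closed subscheme of $\bbP^n_k$ of dimension $m$. A dimension count on the Grassmannian of $(n-m-1)$-planes in $\bbP^n_k$, together with the infinitude of $k$, produces a linear subspace $L \subseteq \bbP^n_k$ of dimension $n-m-1$ with $L \cap Y = \emptyset$. Projection away from $L$ then defines $\pi_L\colon Y \to \bbP^m_k$, which is proper because $Y$ is. Its fibers have the form $Y \cap L_q$ for $(n-m)$-planes $L_q$ containing $L$; inside $L_q \cong \bbP^{n-m}_k$ the subspace $L$ is a hyperplane, so by the projective dimension theorem any positive-dimensional closed subset of $L_q$ meets $L$. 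Since $Y \cap L = \emptyset$, the fibers of $\pi_L$ must be zero-dimensional, making $\pi_L$ finite. The composition $\pi \colonequals \pi_L \circ \rho$ is then the required finite map, with $\pi^*\mcO_{\bbP^m_k}(1) \cong \mcL$ by construction. The implication (3) implies (2) is trivial: take $n = m$.

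I expect the main technical ingredient to be the projective Noether normalization step: both the genericity argument yielding $L$ and the use of the projective dimension theorem to force finite fibers crucially require $k$ infinite and are the only places where genuine projective geometry enters. All remaining steps are formal consequences of the definitions of ample and globally generated sheaves together with Serre's criterion.
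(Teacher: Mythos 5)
Your proposal is correct and follows essentially the same route as the paper: the paper cites Hartshorne (Ample subvarieties, Proposition I.4.4 and the proof of I.4.6) for (1)$\Leftrightarrow$(2) --- whose standard proof is the argument you spell out, with your Serre-criterion treatment of the fibers being a clean way to see quasi-finiteness --- and proves (2)$\Rightarrow$(3) by exactly your generic linear projection from a center disjoint from $\rho(X)$, which is where the hypothesis that $k$ is infinite enters.
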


\begin{proof}
The equivalence of (1) and (2) is proved in  \cite[Proposition I.4.4 and (the proof of) Proposition 1.4.6]{Hartshorne:1970}.
 
 Suppose now that $k$ is infinite. The implication (3)$\Rightarrow$(2) is obvious.

(2)$\Rightarrow$(3): With $\rho$ as in (2), we can find a linear projection $\pi'\colon \bbP^n\to\bbP^m$ defined on all of $f(X)$, since $k$ is infinite. The map $\pi'\circ \rho$ has the desired properties.
\end{proof}
\end{chunk}

\begin{chunk}
\label{ch:good-pairs}
In what follows,  we say $(X,\mcL)$ is a \emph{Noether pair} to mean that $X$ is a proper scheme over an infinite field $k$, and $\mcL$ is an ample and globally generated line bundle on $X$.  The basic example is the pair  $(\bbP^m,\mcO(1))$ which we identify with $\bbP^m$. More generally, if $R$ is a graded $k$-algebra admitting a linear system of parameters, then the pair $(\proj R, \widetilde{R(1)})$ is a Noether pair; see Lemma~\ref{le:RtoX}.

A  morphism $f\colon (X,\mcL)\to (X',\mcL')$  of Noether pairs is a map of $k$-schemes $f\colon X\to X'$ such that  $f^*(\mcL')\cong \mcL$.    By Proposition~\ref{pr:ampleness}, given a Noether pair $(X,\mcL)$ there is a finite dominant linear map of Noether pairs
\[
\pi\colon (X,\mcL) \to \bbP^m_k \quad\text{where $m\colonequals \dim X$.}
\]
Thus, by the projection formula, for any sheaf $\mcF$ on $X$ one has
\begin{equation}
\label{eq:projection}
\CH iX{\mcF(t)} \cong \CH i{\bbP^m_k}{\pi_*\mcF(t)} \quad \text{for all $i,t$.}
\end{equation}
This allows one to introduce a notion of lim Ulrich sequence of sheaves on $X$; see \ref{ch:limUsheaves-X}. First we record the result  below, which is immediate from the equivalence (1)$\Leftrightarrow$(2) in Proposition~\ref{pr:ampleness}.

\begin{lemma}
\label{le:pairs-finitemaps}
Let $(X,\mcL)$ be a Noether pair and $f\colon X'\to X$ a finite morphism of $k$-schemes. Then  $(X',f^*{\mcL})$ is also a Noether pair. \qed
\end{lemma}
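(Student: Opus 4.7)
The plan is to reduce the lemma directly to the equivalence (1)$\Leftrightarrow$(2) from Proposition~\ref{pr:ampleness}, which the hint in the statement suggests is the intended route. Concretely, I will verify the three defining properties of a Noether pair for $(X',f^*\mcL)$: that $X'$ is proper over $k$, and that $f^*\mcL$ is ample and globally generated.

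First I would handle properness. Since $f\colon X'\to X$ is finite, it is proper, and $X\to\spec k$ is proper by hypothesis. Compositions of proper morphisms are proper, so $X'\to\spec k$ is proper.

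Next I would produce a finite map to projective space realizing $f^*\mcL$ as a pullback of $\mcO(1)$. By the equivalence (1)$\Leftrightarrow$(2) in Proposition~\ref{pr:ampleness} applied to the Noether pair $(X,\mcL)$, there is a finite morphism $\rho\colon X\to\bbP^n_k$ for some $n$ with $\mcL\cong\rho^*\mcO_{\bbP^n}(1)$. The composite $\rho\circ f\colon X'\to\bbP^n_k$ is then a composition of finite morphisms, hence finite, and
\[
(\rho\circ f)^*\mcO_{\bbP^n}(1)\;\cong\;f^*\rho^*\mcO_{\bbP^n}(1)\;\cong\;f^*\mcL\,.
\]
Applying the implication (2)$\Rightarrow$(1) in Proposition~\ref{pr:ampleness} to $(X',f^*\mcL)$ with the map $\rho\circ f$ yields that $f^*\mcL$ is ample and globally generated, completing the verification that $(X',f^*\mcL)$ is a Noether pair.

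There is no real obstacle here; the only thing to be slightly careful about is that Proposition~\ref{pr:ampleness}(2) is stated with the target $\bbP^n_k$ for some (not necessarily optimal) $n$, so we do not need to arrange $n=\dim X'$. The dimension-minimal statement (3) would require an additional linear projection as in the proof of Proposition~\ref{pr:ampleness}, but it is not needed for this lemma.
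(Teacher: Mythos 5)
Your proof is correct and follows exactly the route the paper intends: the paper states the lemma as "immediate from the equivalence (1)$\Leftrightarrow$(2) in Proposition~\ref{pr:ampleness}", which is precisely your composition argument (finite $\circ$ finite is finite, pullbacks compose, and properness of $X'$ over $k$ follows since finite morphisms are proper). Your closing remark that one does not need the dimension-minimal form (3) is also accurate.
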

\end{chunk}

\begin{chunk}
\label{ch:limUsheaves-X}
Let  $(X,\mcL)$ be a Noether pair. An \emph{Ulrich} sheaf on $X$ is a coherent sheaf $\mcF$ such that the coherent sheaf $\pi_*(\mcF)$ on $ \bbP^m_k$ is Ulrich for some  finite dominant map $\pi\colon (X,\mcL)\to \bbP^m_k$ of Noether pairs; see \ref{ch:pn}. Given Lemma~\ref{le:Ulrich} and \eqref{eq:projection}, one can characterize this property purely in terms of the cohomology modules of twists of $\mcF$; in particular, it is independent of the choice of $\pi$.  In the same vein, a \emph{lim Ulrich sequence} of sheaves on $X$ is a sequence $(\mcF_n)_{n\geqslant 0}$ of coherent sheaves on $X$ for which the sequence $(\pi_*\mcF_n)_{\geqslant 0}$ is lim Ulrich in the sense of \ref{ch:limUsheaves-P}. Once again, this can be expressed purely in terms of the cohomology of the twists of $\mcF_n$, which reconciles the definition given here with that in \cite[6.6]{Iyengar/Ma/Walker:2022a}.

The result below serves to clarify that the lim Ulrich property is, to a certain extent, independent of the domain of definition of the sheaves involved.

\begin{lemma}
\label{le:limU-sheaves}
Let $f\colon (X,\mcL)\to (X',\mcL')$ be a morphism of Noether pairs where $f$ is finite and dominant. A sequence $(\mcF_n)_{n\geqslant 0}$ of coherent sheaves on $X$ is lim Ulrich if and only if the sequence $(f_*\mcF_n)_{n\geqslant 0}$ of coherent sheaves on $X'$ is lim Ulrich.
\end{lemma}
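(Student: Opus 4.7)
The plan is to reduce the statement to the definition on projective space by choosing a common finite map to $\bbP^m_k$ through both $X$ and $X'$.

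First, since $f\colon X \to X'$ is finite and dominant, and $X$ (hence $X'$) is proper, the image $f(X)$ is closed and equals $X'$; as finite maps preserve dimension, setting $m\colonequals \dim X = \dim X'$ makes sense. Using Proposition~\ref{pr:ampleness}(3) applied to the Noether pair $(X',\mcL')$, pick a finite dominant linear map $\pi'\colon X'\to \bbP^m_k$ with $(\pi')^*\mcO_{\bbP^m}(1)\cong \mcL'$. Then $\pi\colonequals \pi'\circ f \colon X \to \bbP^m_k$ is finite (composition of finite maps) and dominant, and satisfies
\[
\pi^*\mcO_{\bbP^m}(1) \cong f^*(\pi')^*\mcO_{\bbP^m}(1)\cong f^*\mcL' \cong \mcL\,,
\]
since $f$ is a morphism of Noether pairs. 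Thus $\pi$ exhibits $(X,\mcL)$ as a finite dominant linear pair over $\bbP^m_k$, of the type used in \ref{ch:limUsheaves-X} to test the lim Ulrich condition.

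Next, functoriality of pushforward gives $\pi_*\mcF_n \cong \pi'_*(f_*\mcF_n)$ for every $n$. By definition \ref{ch:limUsheaves-X}, $(\mcF_n)_{n\geqslant 0}$ is lim Ulrich on $(X,\mcL)$ if and only if $(\pi_*\mcF_n)_{n\geqslant 0}$ is lim Ulrich on $\bbP^m_k$ in the sense of \ref{ch:limUsheaves-P}; and $(f_*\mcF_n)_{n\geqslant 0}$ is lim Ulrich on $(X',\mcL')$ if and only if $(\pi'_*(f_*\mcF_n))_{n\geqslant 0}$ is lim Ulrich on $\bbP^m_k$. Since these two sequences of sheaves on $\bbP^m_k$ are canonically isomorphic, the two conditions coincide, proving the lemma.

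There is essentially no obstacle here: the only thing one must invoke carefully is that the lim Ulrich property on a Noether pair is independent of the choice of finite linear map to $\bbP^m_k$, which was noted in \ref{ch:limUsheaves-X} and follows from \eqref{eq:projection} together with the fact that conditions (1)--(4) of \ref{ch:limUsheaves-P} are purely numerical invariants of the cohomology of twists. Once that is accepted, the proof is just the functoriality identity $\pi_*\mcF_n \cong \pi'_*(f_*\mcF_n)$ applied to a judicious choice of $\pi$.
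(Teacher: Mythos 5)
Your proof is correct and takes essentially the same route as the paper: the paper's one-sentence argument is precisely the observation that if $\pi'\colon X'\to \bbP^m$ is a finite dominant linear map then so is $\pi'\circ f$, and your write-up simply makes explicit the supporting details ($\dim X=\dim X'$, $\pi^*\mcO_{\bbP^m}(1)\cong\mcL$, $\pi_*\mcF_n\cong\pi'_*(f_*\mcF_n)$, and the independence of the lim Ulrich condition from the choice of finite dominant linear map noted in \ref{ch:limUsheaves-X}).
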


\begin{proof}
The stated claim is immediate from the observation that if $\pi'\colon X'\to \bbP^m$ is a finite dominant linear map, then so is the composition $\pi'\circ f$.
\end{proof}

\end{chunk}
The result below is a minor extension of \cite[Theorem~7.15]{Iyengar/Ma/Walker:2022a}.

\begin{theorem}
\label{th:limU-sheaves}
Let $(X,\mcL)$ be a Noether pair with $\dim X\ge 1$. Assume furthermore that the field $k$ is infinite and perfect of positive characteristic. There exists a lim Ulrich sequence of sheaves  $(\mcF_n)_{n\geqslant 0}$ on $X$ such that for each $n\ge 0$ one has
\[
\depth (\mcF_n)_x \ge
\begin{cases}
1 & \text{when $x\in X$ is a closed point}\\
\depth \mcO_{X,x} & \text{for all $x$}.
\end{cases}
\]
\end{theorem}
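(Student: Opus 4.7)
The plan is to reduce to Theorem~\ref{th:limU-sheaves-P} on $\bbP^m_k$, with $m = \dim X$, via a finite dominant linear map $\pi\colon X\to \bbP^m_k$ as supplied by Proposition~\ref{pr:ampleness}(3). The naive attempt $\mcF_n \colonequals (\vf_X^n)_*(\pi^*\mcE_n)$ gives, using the identity $\pi\circ \vf_X = \vf_{\bbP^m}\circ \pi$ and the projection formula for the locally free sheaf $\mcE_n$,
\[
\pi_*\mcF_n \;\cong\; (\vf_{\bbP^m}^n)_*\bigl(\pi_*\mcO_X\otimes \mcE_n\bigr),
\]
so by Lemma~\ref{le:limU-sheaves} one would want to apply Theorem~\ref{th:limU-sheaves-P} with $\mcN = \pi_*\mcO_X$. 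Positivity of rank is clear, but the hypothesis $\gamma_{0,t}(\pi_*\mcO_X) = H^0(X,\mcL^t) = 0$ for $t\ll 0$ may fail: a non-zero section of $\mcO_X$ supported at a closed point produces a non-zero section of every twist $\mcL^t$.

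To remedy this I would replace $X$ with the closed subscheme $i\colon X'\hookrightarrow X$ cut out by the ideal sheaf $\Gamma^0(\mcO_X)$ of sections of $\mcO_X$ supported at closed points. Then $X'$ has no $0$-dimensional associated primes, and the composition $\pi'\colonequals \pi\circ i\colon X'\to \bbP^m_k$ is still finite and dominant, with $\dim X' = \dim X\geqslant 1$, so $H^0(X',\mcL^t|_{X'}) = 0$ for $t\ll 0$. Setting
\[
\mcF_n \colonequals i_*(\vf_{X'}^n)_*\bigl((\pi')^*\mcE_n\bigr),
\]
the analogous base-change computation yields $\pi_*\mcF_n \cong (\vf_{\bbP^m}^n)_*(\pi'_*\mcO_{X'}\otimes \mcE_n)$, which by Theorem~\ref{th:limU-sheaves-P} (applied with $\mcN = \pi'_*\mcO_{X'}$) is lim Ulrich on $\bbP^m_k$. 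Lemma~\ref{le:limU-sheaves} then yields lim Ulrichness of $(\mcF_n)_{n\geqslant 0}$ on $X$.

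For the depth bounds, I would argue stalkwise. Since $(\mcF_n)_x$ is annihilated by $\Gamma^0(\mcO_X)_x$, its depth over $\mcO_{X,x}$ coincides with its depth over the quotient $\mcO_{X',x} = \mcO_{X,x}/\Gamma^0(\mcO_X)_x$. The pullback $(\pi')^*\mcE_n$ is locally free on $X'$, so $(\mcF_n)_x$ is the Frobenius pushforward of a free $\mcO_{X',x}$-module. The identity $\depth_R F^n_* R^N = \depth R$ is immediate from the classical fact that $a_1,\ldots,a_r$ is $R$-regular if and only if $a_1^{p^n},\ldots,a_r^{p^n}$ is. Thus $\depth(\mcF_n)_x = \depth \mcO_{X',x}$. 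At a non-closed $x$ one has $\Gamma^0(\mcO_X)_x = 0$, so $\mcO_{X',x} = \mcO_{X,x}$ and the bound becomes an equality. At a closed $x$ with $\dim\mcO_{X,x} = 0$, all of $\mcO_{X,x}$ is $\fm_x$-torsion, so $x\notin X'$ and $(\mcF_n)_x = 0$, and both depth bounds hold vacuously. At a closed $x$ with $\dim\mcO_{X,x}\geqslant 1$, the quotient $\mcO_{X,x}/H^0_{\fm_x}(\mcO_{X,x})$ has depth $\geqslant 1$ by construction, and coincides with $\mcO_{X,x}$ as soon as $\depth\mcO_{X,x}\geqslant 1$; in either case $\depth(\mcF_n)_x \geqslant \max(1, \depth\mcO_{X,x})$.

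The hard part will be the first step: identifying that $\pi_*\mcO_X$ can fail the low-twist vanishing required by Theorem~\ref{th:limU-sheaves-P}, and arranging the replacement $X'$ so that the construction remains lim Ulrich on all of $X$ while the depth bounds propagate back through the closed immersion $i$.
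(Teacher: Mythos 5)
Your proposal is correct and follows essentially the same route as the paper: the paper also fixes a finite dominant linear $\pi\colon X\to\bbP^m_k$, kills the sections of $\mcO_X$ supported at (depth-zero) closed points to obtain a quotient $\mcM$ of $\mcO_X$ with $\CH 0X{\mcM(t)}=0$ for $t\ll 0$, positive rank of $\pi_*\mcM$, and $\depth\mcM_x\ge\depth\mcO_{X,x}$, and then sets $\mcF_n\colonequals \vf^n_*(\mcM\otimes\pi^*\mcE_n)$, which is exactly your $i_*(\vf^n_{X'})_*((\pi')^*\mcE_n)$ since $\mcM=i_*\mcO_{X'}$ and Frobenius commutes with $i$ and $\pi$. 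Your stalkwise depth argument and the reduction to Theorem~\ref{th:limU-sheaves-P} with $\mcN=\pi'_*\mcO_{X'}=\pi_*\mcM$ match the paper's use of Lemma~\ref{le:sheaf-depth}, the projection formula, and Lemma~\ref{le:limU-sheaves}.
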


\begin{proof}
Set $m\colonequals \dim X$ and let $\pi\colon  X\to \bbP^m_k$ be a finite dominant linear map.  We begin by choosing a coherent sheaf $\mcM$  on $X$ satisfying the following conditions
\begin{enumerate}[\quad\rm(1)]
\item
$\CH 0{\bbP^m_k}{\pi_*\mcM(t)} = \CH 0X{\mcM(t)}=0$ for $t\ll 0$;
\item
The sheaf $\pi_*\mcM$ on $\bbP^m_k$ has positive rank;
\item
$\depth \mcM_x \ge \depth \mcO_{X,x}$ for each $x\in X$.
\end{enumerate}

To construct such an $\mcM$, consider the closed subset $W$ of $X$  consisting of  closed points $x\in X$ such that $\depth \mcO_{X,x}=0$.  Let $\mathcal{J}$ be the sheaf of local sections of $\mcO_{X}$ supported on $W$, and set $\mcM\colonequals \mcO_X/\mathcal{J}$.  This construction ensures  $\mcM_x$ is isomorphic to $\mcO_{X,x}$ for $x\not\in W$, and $\depth \mcM_x \ge 1$ for $x\in W$.  This justifies both (1) and (3); see Lemma~\ref{le:sheaf-depth} below.  Moreover $\dim \mcM = \dim X = \dim \bbP^m_k$, so $\pi_*\mcM$ has positive rank and (2) holds. 

With $\mcE_n$ the sheaves on $\bbP^m_k$ defined in \ref{ch:limU-p}, set
\[
\mcF_n \colonequals \vf^n_*(\mcM \otimes\pi^*\mcE_n)
\]
where $\vf$ is the Frobenius morphism on $X$. Since the Frobenius map preserves depth and $\pi^*\mcE_n$ is locally free,  $\depth (\mcF_n)_x=\depth \mcM_x$ for each $n$ and $x\in X$. In particular $\depth(\mcF_n)_x$ has the stated properties.

Using the fact that Frobenius commutes with pushfoward of sheaves, and the projection formula, one gets
\[
\pi_* \mcF_n = \pi_*\vf^n_*(\mcM \otimes\pi^*\mcE_n) \cong \vf^n_*(\pi_*\mcM \otimes \mcE_n)\,.
\]
Theorem~\ref{th:limU-sheaves-P} yields that the sequence $(\pi_*\mcF_n)_{n\geqslant 0}$ of sheaves on $\bbP^m_k$ is lim Ulrich. Thus, by definition, the sequence $(\mcF_n)_{n\geqslant 0}$ is lim Ulrich; see also Lemma~\ref{le:limU-sheaves}.
\end{proof}

\begin{lemma}
\label{le:sheaf-depth}
Let $X$ be as in \eqref{ch:good-pairs} and $\mcF$ a coherent  sheaf on $X$. If $\mcF_x$ has positive depth  at every closed point $x\in X$, then $\CH 0X{\mcF(t)} = 0$ for $t \ll 0$.
\end{lemma}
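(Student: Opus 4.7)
The plan is to pick a global section $s \in \CH 0X{\mcL}$ that is a non-zero-divisor on $\mcF$, realize the spaces $\CH 0X{\mcF(t)}$ for $t \leq 0$ as a descending chain of subspaces inside the finite-dimensional space $\CH 0X{\mcF}$, and then show the stable intersection vanishes by combining Krull's intersection theorem with the ampleness of $\mcL$.

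By the depth hypothesis, no closed point of $X$ is an associated point of $\mcF$, and there are only finitely many associated points in total. Since $\mcL$ is globally generated, for each such point the subspace of $\CH 0X{\mcL}$ of sections vanishing there is proper, so, using that $k$ is infinite, a standard prime-avoidance argument yields a section $s \in \CH 0X{\mcL}$ avoiding all associated points of $\mcF$. Then $s$ is a non-zero-divisor on $\mcF$, giving a sheaf injection $\mcF \hookrightarrow \mcF(1)$ and hence injections $\CH 0X{\mcF(t)} \hookrightarrow \CH 0X{\mcF(t+1)}$ for every $t \in \bbZ$. Via iteration, each $\CH 0X{\mcF(t)}$ for $t \leq 0$ embeds into the finite-dimensional space $\CH 0X{\mcF}$, producing a descending chain of subspaces that must stabilize to a subspace $L$ consisting of those $\sigma \in \CH 0X{\mcF}$ for which, for every $n \geq 0$, there exists $\tau_n \in \CH 0X{\mcF(-n)}$ with $\sigma = s^n \tau_n$. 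It suffices to show $L = 0$.

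Let $\sigma \in L$. At any closed point $x \in V(s)$, one has $s_x \in \fm_x \mcL_x$, so $\sigma_x \in \fm_x^n \mcF_x$ for every $n$, forcing $\sigma_x = 0$ by Krull's intersection theorem applied to the Noetherian local ring $\mcO_{X,x}$. Consequently, the cyclic subsheaf $\mcM := \sigma(\mcO_X) \subseteq \mcF$ has trivial stalk at every closed point of $V(s)$, and since $X$ is of finite type over $k$ (hence Jacobson), $\supp(\mcM) \cap V(s) = \emptyset$. Ampleness of $\mcL$ then forces $\supp(\mcM)$ to be zero-dimensional: any positive-dimensional closed $Y \subseteq X$ disjoint from $V(s)$ would admit $s|_Y$ as a nowhere-vanishing section of the ample bundle $\mcL|_Y$, trivializing it, which is impossible on a positive-dimensional proper scheme.

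Finally, $\mcM \subseteq \mcF$ inherits the depth-$\geq 1$ hypothesis at closed points, since any $\fm_x$-torsion element of $\mcM_x$ would give $\fm_x$-torsion in $\mcF_x$. But a nonzero coherent sheaf supported on finitely many closed points is $\fm_x$-torsion at each point of its support, hence of depth zero. This contradiction forces $\mcM = 0$, so $\sigma = 0$, and therefore $L = 0$, completing the argument. The key geometric input is the ample-divisor step pinning $\supp(\mcM)$ to dimension zero; everything else is routine.
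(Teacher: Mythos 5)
Your argument is correct, but it is genuinely different from the one in the paper. The paper's proof first pushes $\mcF$ forward along a finite linear projection to reduce to $X=\bbP^m$, then applies Grothendieck--Serre duality to identify $\CH 0X{\mcF(t)}^*$ with $\mathrm{Ext}^m(\mcF,\mcO_X(-t-m-1))$, and kills this group via the local-to-global Ext spectral sequence: the depth hypothesis together with regularity of the local rings of $\bbP^m$ gives $\underline{\mathrm{Ext}}^m(\mcF,\mcO_X)=0$, while Serre vanishing handles the terms $\mathrm{E}^{p,m-p}$ with $p\ge 1$ once $t\ll 0$. You instead argue intrinsically on $X$: prime avoidance (using that $k$ is infinite and $\mcL$ is globally generated) produces a section $s$ missing the finitely many associated points of $\mcF$, hence a non-zero-divisor; the groups $\CH 0X{\mcF(-n)}$ then embed as a descending chain in the finite-dimensional space $\CH 0X{\mcF}$, and you show the stable subspace vanishes by combining Krull's intersection theorem at closed points of $V(s)$, the Jacobson property of $X$, the fact that an ample line bundle cannot be trivial on a positive-dimensional proper subscheme (which pins the support of a stable section to finitely many closed points), and finally the depth hypothesis to exclude such sections. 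Your route is more elementary and self-contained --- it needs neither duality, nor the reduction to $\bbP^m$, nor regularity of any ambient local ring, and it makes explicit where each hypothesis of a Noether pair enters --- at the cost of being longer; the paper's argument is shorter given the standard duality and spectral sequence machinery. Two small points you leave implicit, both harmless: in the ampleness step one should take $Y$ to be a positive-dimensional irreducible component of $\supp(\mcM)$ with its reduced structure, and the passage from $L=0$ to $\CH 0X{\mcF(-n)}=0$ for $n\gg 0$ uses that multiplication by $s^n$ is injective on global sections, which follows from $s$ being a non-zero-divisor on $\mcF$.
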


\begin{proof}
 By pushing forward to $\bbP^m$ we may reduce to the case where $X=\bbP^m$. In this case, Grothendieck-Serre duality gives
\[
\CH 0X{\mcF(t)}^* \cong \mathrm{Ext}^m(\mcF, \mcO_{X}(-t-m-1))
\]
where $(-)^*$ denotes $k$-linear duals and Ext is computed in  the abelian category of coherent sheaves.  We have a spectral sequence
\[
\mathrm{E}^{p,q} \colonequals \CH p{X}{\underline{\mathrm{Ext}}^q(\mcF, \mcO_{X}(-t-m-1))}
\Longrightarrow \mathrm{Ext}^{p+q}(\mcF, \mcO_{X}(-t-m-1))  
\]
where $\underline{\mathrm{Ext}}$ denotes sheafified Ext. Since $\mcF_x$ has  positive depth at all closed points and the local rings $\mcO_{X,x}$ are regular, we have  $\underline{\mathrm{Ext}}^m(\mcF, \mcO_{X}) = 0$; thus $\mathrm{E}^{0,m} = 0$.  By Serre vanishing  $\mathrm{E}^{p,m-p} = 0$ for all $p \ge 1$ for $t \ll 0$. This proves 
\[
\mathrm{Ext}^m(\mcF, \mcO_{X}(-t-m-1))\quad\text{for $t\ll 0$.} \qedhere
\]
\end{proof}

\section{Cones of cohomology tables}
\label{se:cohomology}
Throughout this section $k$ is an infinite field and  $(X,\mcL)$ a Noether pair over $k$, as in~\ref{ch:good-pairs}, and  $\pi\colon  X\to \bbP^m_k$ a finite dominant  map with $\mcL \cong \pi^* \mcO_{\bbP_k^m}(1)$; thus $m=\dim X$. We prove that the closure of the cohomology tables of coherent sheaves on $X$ and $\bbP_k^m$ coincide; see~Theorem~\ref{th:bs-cohomology}.

\begin{chunk}
\label{ch:notation-space}
Extending the notation from \ref{ch:pn},  for any $\mcG$ in $\dbcat(\coh X)$ set
\[
\gamma_{i,j}(\mcG) \colonequals \rank_k \CH{i}X{\mcG(j)}\,\quad \text{and}\quad \gamma(\mcG)\colonequals (\gamma_{i,j})_{i,j}\,.
\]
By the projection formula \eqref{eq:projection}, which applies also to objects in $\dbcat(\coh X)$, any $\gamma(\mcG)$ occurs as the cohomology table of a complex in $\dbcat(\coh \bbP^m)$. The result below means that the converse also holds, up to limits.

\begin{lemma}
\label{le:bs-cohomology}
Let  $(\mcF_n)_{n\geqslant 0}$ be a  lim Ulrich sequence of sheaves on $X$.  For each $\mcG$ in $\dbcat(\coh \bbP^m_k)$ there exists a sequence $(\mcG_n)_{n\geqslant 0}$ in $\dbcat(\coh X)$ such that 
\[
\lim_{n\to \infty} \frac{\gamma_{i,j}(\mcG_n)}{\gamma_{0,0}(\mcF_n)} = \gamma_{i,j}(\mcG)\qquad\text{for all $i,j$.}
\]
When $\mcG$ is a vector bundle on $\bbP^m$, the $\mcG_n$ can be chosen in $\coh X$.
\end{lemma}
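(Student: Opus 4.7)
The natural candidate, essentially forced by the projection formula, is
\[
\mcG_n \colonequals \mcF_n\lotimes L\pi^*\mcG\,.
\]
Since $\bbP^m_k$ is regular, $\mcG$ admits a bounded locally free resolution, so $L\pi^*\mcG$ is modeled by a bounded complex of vector bundles on $X$ and $\mcG_n\in\dbcat(\coh X)$. Because $\pi$ is finite (so $R\pi_*=\pi_*$) and $\mcL\cong \pi^*\mcO_{\bbP^m}(1)$, combining the projection formula with \eqref{eq:projection} identifies
\[
\gamma_{i,j}(\mcG_n) \;=\; \rank_k\CH i{\bbP^m_k}{(\pi_*\mcF_n)\lotimes \mcG(j)}\,,
\]
transferring the entire computation to $\bbP^m_k$. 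By the definition in \ref{ch:limUsheaves-X}, the sequence $(\pi_*\mcF_n)_{n\geqslant 0}$ is itself lim Ulrich on $\bbP^m_k$, with $r_n\colonequals \gamma_{0,0}(\pi_*\mcF_n)=\gamma_{0,0}(\mcF_n)$.

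The next step is to apply the counit triangle from \eqref{eq:cone} to $\pi_*\mcF_n$, namely
\[
\mcO_{\bbP^m}^{r_n}\lra \pi_*\mcF_n\lra \cone(\pi_*\mcF_n)\lra,
\]
tensor it with $\mcG(j)$, and read off the associated long exact sequence of hypercohomology. A standard estimate from this long exact sequence yields
\[
\bigl| r_n\,\gamma_{i,j}(\mcG) - \gamma_{i,j}(\pi_*\mcF_n\lotimes \mcG)\bigr| \;\leq\; \sum_{\ell\in\{i-1,i\}}\rank_k \CH{\ell}{\bbP^m_k}{\cone(\pi_*\mcF_n)\lotimes \mcG(j)}\,.
\]
Dividing by $r_n$ and letting $n\to\infty$, each term on the right tends to $0$ by Theorem~\ref{th:limU} applied with the bounded complex $\mcG(j)\in\dbcat(\coh\bbP^m_k)$. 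This gives the desired equality of limits.

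For the last clause, when $\mcG$ is a vector bundle on $\bbP^m_k$ then $\pi^*\mcG$ is a vector bundle on $X$, so $L\pi^*\mcG=\pi^*\mcG$ and $\mcG_n=\mcF_n\otimes \pi^*\mcG$ is a genuine coherent sheaf on $X$. I do not expect any essential obstacle: the substantive vanishing needed to kill the error term above is already packaged in Theorem~\ref{th:limU}, and the remaining work is bookkeeping with the projection formula, regularity of $\bbP^m_k$, and the long exact sequence. The only place one must be slightly careful is verifying that $L\pi^*\mcG$ lies in $\dbcat(\coh X)$ rather than $\dbbcat(\coh X)$; this is precisely why regularity of $\bbP^m_k$ is invoked at the outset.
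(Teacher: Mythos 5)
Your proposal is correct and follows essentially the same route as the paper: you take $\mcG_n \colonequals \mcF_n \lotimes \leftd\pi^*\mcG$, transfer the computation to $\bbP^m_k$ via the projection formula, and use the counit triangle for $\pi_*\mcF_n$ together with Theorem~\ref{th:limU} to kill the error term. Your explicit long-exact-sequence estimate is just a spelled-out version of the paper's appeal to the triangle \eqref{eq:cone}, and the vector-bundle case is handled identically.
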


\begin{proof}
For each $n\ge 0$ set
\[
\mcG_n\colonequals  \mcF_n  \lotimes \leftd\pi^*\mcG\,.
\]
In particular, when $\mcG$ is a vector bundle, each $\mcG_n$ is a coherent sheaf on $X$. The projection formula yields
\[
\pi_*(\mcG_n) \cong  \pi_*(\mcF_n)  \lotimes \mcG  \,,
\]
so  Theorem~\ref{th:limU} applied to the lim Ulrich sequence $(\pi_*(\mcF_n))_{n\geqslant 0}$ yields 
\[
\lim_{n\to \infty}\frac{\gamma_{i,j}(\mcG_n)}{\gamma_{0,0}(\mcF_n)} 
	= \lim_{n\to \infty}\frac{\gamma_{i,j}(\ve^*\ve_*\pi_*(\mcF_n)  \lotimes \mcG)}{\gamma_{0,0}(\mcF_n)}  \,.
\]
Since one has equalities
\[
\gamma_{i,j}( \ve^*\ve_*\pi_*(\mcF_n)  \lotimes \mcG ) = \gamma_{0,0}(\mcF_n) \gamma_{i,j}(\mcG)
\]
the desired result follows.
\end{proof}

\end{chunk}

\begin{chunk}
\label{ch:cone-cohomology}
Fix an integer $m$ and consider the $\bbQ$-vector space
\[
W \colonequals \bigoplus_{i=0}^m \prod_{j\in\bbZ}\bbQ\,,
\]
endowed with the topology defined by pointwise convergence.

For any coherent sheaf $\mcG$ on $X$ we view $\gamma(\mcG)$, defined in \ref{ch:notation-space}, as an element in $W$. This is the \emph{cohomology table} of $\mcG$.  The \emph{cone} of cohomology tables is the subspace
\[
\ccone(X)\colonequals \sum_{\mcG\in \coh X} \bbQ_{\geqslant 0} \gamma(\mcG)\,,
\]
of $W$. Observe that the definition involves only the coherent sheaves on $X$, and not all of $\dbcat(\coh X)$. We write $\overline{\ccone}(X)$ for the closure of $\ccone(X)$ in $W$.
\end{chunk}

\begin{theorem}
\label{th:bs-cohomology}
Let $k$ be an infinite perfect field of positive characteristic and $(X,\mcL)$ a Noether pair over $k$. There is an equality $\overline{\ccone}(X)=\overline{\ccone}(\bbP^m)$ for $m\colonequals \dim X$.
\end{theorem}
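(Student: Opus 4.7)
The proof splits into two inclusions. The containment $\overline{\ccone}(X) \subseteq \overline{\ccone}(\bbP^m)$ is immediate from the projection formula \eqref{eq:projection}: for any coherent sheaf $\mcG$ on $X$, the pushforward $\pi_*\mcG$ is coherent on $\bbP^m$ and $\gamma(\mcG) = \gamma(\pi_*\mcG)$, so $\ccone(X) \subseteq \ccone(\bbP^m)$ and the same containment passes to the closures.

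For the reverse inclusion, the plan is to use lim Ulrich sheaves to approximate arbitrary cohomology tables on $\bbP^m$ by ones from $X$. By Theorem~\ref{th:limU-sheaves} (applicable since $k$ is infinite, perfect, and of positive characteristic), pick a lim Ulrich sequence $(\mcF_n)$ on $X$ satisfying $\depth(\mcF_n)_x \geq \depth \mcO_{X,x}$ at every point $x$, and set $r_n \colonequals \gamma_{0,0}(\mcF_n)$. Given any coherent sheaf $\mcG$ on $\bbP^m$, take as candidate approximant the coherent sheaf $\mathcal{H}_n \colonequals \mcF_n \otimes_{\mcO_X} \pi^*\mcG$ on $X$, aiming to show $\gamma(\mathcal{H}_n)/r_n \to \gamma(\mcG)$, which places $\gamma(\mcG)$ in $\overline{\ccone}(X)$. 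When $\mcG$ is a vector bundle, this is precisely the content of Lemma~\ref{le:bs-cohomology}.

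For a general coherent $\mcG$, the non-derived projection formula for the finite morphism $\pi$ gives $\pi_*\mathcal{H}_n \cong \pi_*\mcF_n \otimes_{\mcO_{\bbP^m}}\mcG$, so via \eqref{eq:projection} it suffices to show $\gamma(\pi_*\mcF_n \otimes_{\mcO_{\bbP^m}}\mcG)/r_n \to \gamma(\mcG)$. The analogous derived version $\gamma(\pi_*\mcF_n \lotimes \mcG)/r_n \to \gamma(\mcG)$ is Lemma~\ref{le:bs-cohomology} applied to $\mcG$ viewed as an object of $\dbcat(\coh \bbP^m)$, combined with the derived projection formula. The main obstacle is to bridge these two versions, namely to show that the higher Tor sheaves $\Tor_q^{\mcO_{\bbP^m}}(\pi_*\mcF_n, \mcG)$ for $q \geq 1$ contribute only $o(r_n)$ to the cohomology table. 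When $X$ is Cohen-Macaulay, the depth bound combined with the Auslander-Buchsbaum formula implies that $\pi_*\mcF_n$ is locally free on the regular scheme $\bbP^m$, so these Tor sheaves vanish outright and the two versions coincide. In general, since the $\mcO_{\bbP^m}^{r_n}$-piece in the cone triangle $\mcO_{\bbP^m}^{r_n} \to \pi_*\mcF_n \to \cone(\pi_*\mcF_n) \to$ is flat, the desired Tor sheaves embed into the cohomology sheaves of $\cone(\pi_*\mcF_n) \lotimes \mcG$; applying Theorem~\ref{th:limU} to the terms of a finite vector bundle resolution $\mcP_\bullet \to \mcG$ then yields the required $o(r_n)$-bound and completes the proof.
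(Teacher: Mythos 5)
Your first inclusion and your reduction of the problem to comparing the underived sheaf $\pi_*\mcF_n\otimes\mcG$ with the derived object $\pi_*\mcF_n\lotimes\mcG$ are fine, and in the Cohen--Macaulay case your observation that $\pi_*\mcF_n$ is locally free (depth plus Auslander--Buchsbaum) does close the argument. The gap is in the general case, at the final step. You correctly note that for $q\ge 1$ the sheaves $\Tor_q^{\mcO_{\bbP^m}}(\pi_*\mcF_n,\mcG)$ embed into the homology sheaves of $\cone(\pi_*\mcF_n)\lotimes\mcG$, but what you then need is a bound $\rank_k\operatorname{H}^p\bigl(\bbP^m,\Tor_q(\pi_*\mcF_n,\mcG)(j)\bigr)=o(r_n)$ on the \emph{sheaf cohomology of the individual homology sheaves}, whereas Theorem~\ref{th:limU} only controls the \emph{hypercohomology} of $\cone(\pi_*\mcF_n)\lotimes\mcG$ (and of $\cone(\pi_*\mcF_n)\otimes\mcP_i$ for the terms of a vector bundle resolution $\mcP_\bullet\to\mcG$). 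A small abutment in the hypercohomology spectral sequence $\operatorname{H}^p(\mathcal H_q(-)(j))\Rightarrow\mathbb H^{p-q}$ does not bound the $E_2$-terms: large contributions in adjacent degrees can cancel. Applying Theorem~\ref{th:limU} termwise to $\mcP_\bullet$ only reproves the hypercohomology estimate you already have; it gives no handle on $\mathcal H_q(\pi_*\mcF_n\otimes\mcP_\bullet)=\Tor_q(\pi_*\mcF_n,\mcG)$, whose terms $\pi_*\mcF_n\otimes\mcP_i$ have cohomology of size $\sim r_n$. So the asserted $o(r_n)$-bound is a genuinely new claim about the local structure of $\pi_*\mcF_n$ at non-Cohen--Macaulay points (its local Betti numbers), which is not part of the lim Ulrich package; the definition in \ref{ch:limUsheaves-P} and the depth guarantees of Theorem~\ref{th:limU-sheaves} constrain cohomology of twists, not local free resolutions.

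The paper sidesteps exactly this difficulty: it never tensors a lim Ulrich sequence with a general coherent sheaf. Instead it invokes the Eisenbud--Schreyer decomposition \cite[Theorem~0.1]{Eisenbud/Schreyer:2010}, writing $\gamma(\mcF)$ as a convergent positive real combination of tables $\gamma(\iota_*\mcE)$ with $\mcE$ a vector bundle on a linear subspace $\bbP^c\subseteq\bbP^m$; it then forms the fiber product $X'=X\times_{\bbP^m}\bbP^c$, which is again a Noether pair (Lemma~\ref{le:pairs-finitemaps}) carrying its own lim Ulrich sequence (Theorem~\ref{th:limU-sheaves}), so that Lemma~\ref{le:bs-cohomology} applies to the honest vector bundle $\mcE$ and produces genuine coherent sheaves on $X'$; finally a diagonal approximation handles the infinite sum with real coefficients inside $\overline{\ccone}(X)$. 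If you want to pursue your more direct route, you must either prove the $o(r_n)$ Tor-cohomology bound for the specific Frobenius-pushforward sequences of \ref{ch:limU-p} (a Hilbert--Kunz-type estimate, not available from the cited results), or reduce to vector bundles as the paper does.
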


\begin{proof}
Let $\pi\colon (X,\mcL) \to \bbP^m_k$ be a finite dominant linear map of Noether pairs. It is clear from \eqref{eq:projection} that $\ccone(X)\subseteq \ccone(\bbP^m)$. We have only to prove that the cohomology table of a coherent sheaf $\mcF$ on $\bbP^m$ is in  $\overline{\ccone}(X)$. By \cite[Theorem~0.1]{Eisenbud/Schreyer:2010}, one has a convergent sum
\[
\gamma(\mcF) = \sum_{i\geqslant 1}a_i\gamma(\mcE_i) 
\]
where each $\mcE_i$ is a coherent sheaf obtained as a push-forward of a vector bundle on a linear subset of $\bbP^m$ and each $a_i$ is a positive real number. We claim that it suffices to verify that each $\gamma(\mcE_i)$ is in $\overline{\ccone}(X)$.

Indeed, assume this is so and for each $n\ge 1$ consider the sum
\[
\gamma(n) \colonequals a_1\gamma(\mcE_i) + \cdots + a_n\gamma(\mcE_n)\,.
\]
Since each $\gamma(\mcE_i)$ is in $\overline{\ccone}(X)$ approximating the $a_i$ by positive rational numbers we can write 
\[
\gamma(n) = \lim_{s\to \infty} \gamma(n)(s)
\]
where each $\gamma(n)(s)$ is a positive rational combination of $\gamma(\mcE_1),\dots,\gamma(\mcE_n)$ and hence in $\overline{\ccone}(X)$. For each $n$ pick an integer $s_n\ge 1$ such that
\[
|\gamma(n)_{a,b} - \gamma(n)(s_n)_{a,b}| < 1/2^n
\]
for all $a,b$ with $|a|,|b|< n$. Evidently  
\[
\lim_{n\to \infty} \gamma(n)(s_n) = \gamma(\mcF)
\]
and each $\gamma(n)(s_n)$ is in $\overline{\ccone}(X)$, so $\gamma(\mcF)$ is in $\overline{\ccone}(X)$, as claimed.

It thus remains to consider a coherent sheaf of the form $\iota_*(\mcE)$, where $\iota\colon \bbP^c\subseteq \bbP^m$ is a linear subspace and $\mcE$ is a vector bundle on $\bbP^c$, and verify that   $\gamma(\mcE)$ is in the closure of $\ccone(X)$.

Consider the scheme $X'$ obtained by the pull-back of $\pi$ along $\iota$. 
\[
\begin{tikzcd}
X' \ar[d,"\pi'" swap] \ar[r] & X  \ar[d,"\pi"] \\
\bbP^c  \ar[r, "\iota"] & \bbP^m
\end{tikzcd}
\]
With $\mcL'\colonequals (\pi')^*\mcO_{\bbP^c}(1)$, the pair $(X',\mcL')$ is a Noether pair; see Lemma~\ref{le:pairs-finitemaps}. Using the projection formula we identify  the cohomology tables of coherent sheaves on $X'$ and on $\bbP^c$ as elements of $W$,  from \ref{ch:cone-cohomology}. Thus $\overline{\ccone}(X')\subseteq \overline{\ccone}(X)$.

The scheme $X'$ admits a lim Ulrich sequence. This is clear when $c=0$; for $c\ge 1$ it is contained in Theorem~\ref{th:limU-sheaves}.  Thus, keeping in mind that $\mcE$ is a vector bundle, Lemma~\ref{le:bs-cohomology} yields that $\gamma(\mcE)$ is in $\overline{\ccone}(X')$, and hence in $\overline{\ccone}(X)$, as desired.
\end{proof}

\section{Lim Ulrich sequences of graded modules}
\label{se:limU}
As before $k$ is a field. Let $R\colonequals \{R_i\}_{i\geqslant 0}$ be a finitely generated, graded $k$-algebra with $R_0=k$. Set $\fm\colonequals R_{\geqslant 1}$; this is the  unique homogenous  maximal ideal of $R$.  We write $\grmod R$ for the category of finitely generated graded $R$-modules, with morphisms the degree preserving $R$-linear maps. The component in degree $j$ of a graded $R$-module  $M$ is denoted $M_j$. 

The full subcategory of the derived category of $\grmod R$ consisting of complexes $C$ with $\HH iC=0$  for $i\ll 0$ is denoted $\dbbcat(\grmod R)$. It contains   $\dbcat(\grmod R)$ the bounded derived category of finitely generated graded $R$-modules.

\begin{chunk}
\label{ch:uequivalence}
 Let $\bsu \colonequals  (u_n)_{n\geqslant 1}$ be a sequence of positive integers. We say that a sequence of objects $(C^n)_{n\geqslant 1}$ of $\dbbcat(\grmod R)$ is \emph{$\bsu$-trivial} if for all integers $i,j$ one has
\[
\lim_{n \to \infty} \frac{\rank_k \HH i{C^n}_j}{u_n} = 0\,.
\]
A sequence of morphisms $(f^n\colon C^n \to D^n)$ in $\dbbcat(\grmod R)$ is a \emph{$\bsu$-equivalence} if the sequence of their mapping cones,  $\cone(f^n)$, is $\bsu$-trivial. It is easy to verify that a composition of $\bsu$-equivalences is also a $\bsu$-equivalence. This observation will be used often in the sequel.
\end{chunk}

The proof of the result below is straightforward; see also \cite[Lemma~5.3]{Iyengar/Ma/Walker:2022a}.

\begin{proposition}
\label{pr:uiso}
 If a sequence of morphisms $(f^n\colon C^n \to D^n)$ in $\dbbcat(\grmod R)$ is a $\bsu$-equivalence, then for any $i,j$ there is an equality
 \[
\limsup_{n \to \infty} \frac{\rank_k \HH i{C^n}_j}{u_n}  = \limsup_{n \to \infty} \frac{\rank_k \HH i{D^n}_j}{u_n}\,.
\]
The corresponding statements involving $\liminf$ also holds. In particular, if one of the limits exists, so does the other and the two limits coincide. \qed
\end{proposition}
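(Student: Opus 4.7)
The plan is to read off, for each pair of integers $(i,j)$, bounds on the difference of ranks from the long exact sequence in homology induced by the exact triangle
\[
C^n \xrightarrow{f^n} D^n \lra \cone(f^n) \lra \shift C^n
\]
in $\dbbcat(\grmod R)$. Restricting to the graded piece in degree $j$ yields a four-term exact sequence of finite-dimensional $k$-vector spaces
\[
\HH{i+1}{\cone(f^n)}_j \lra \HH{i}{C^n}_j \lra \HH{i}{D^n}_j \lra \HH{i}{\cone(f^n)}_j\,.
\]
From this, standard rank bookkeeping — the image of the middle map is simultaneously a quotient of $\HH{i}{C^n}_j$ and a subspace of $\HH{i}{D^n}_j$ — gives the key estimate
\[
\bigl|\rank_k \HH{i}{C^n}_j - \rank_k \HH{i}{D^n}_j\bigr| \leq \rank_k \HH{i+1}{\cone(f^n)}_j + \rank_k \HH{i}{\cone(f^n)}_j\,.
\]

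Dividing both sides by $u_n$ and letting $n \to \infty$, the right-hand side tends to zero by the $\bsu$-triviality of $(\cone(f^n))_{n\geqslant 1}$. A routine fact about sequences of nonnegative reals states that if $|a_n - b_n| \to 0$ then $\limsup a_n = \limsup b_n$ and $\liminf a_n = \liminf b_n$ in $[0,\infty]$. Applying this with $a_n = \rank_k \HH{i}{C^n}_j / u_n$ and $b_n = \rank_k \HH{i}{D^n}_j / u_n$ proves the first assertion, and the corresponding $\liminf$ statement. The final claim — that if one of the limits exists, so does the other and they agree — is then immediate, since a limit exists precisely when $\limsup$ and $\liminf$ coincide, and both equalities transfer between the two sides.

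There is no substantial obstacle here; the argument is a matter of numerical estimates extracted from a long exact sequence. The minor points to verify are that each $\HH{i}{C^n}_j$ is a finite-dimensional $k$-vector space — which follows because the homology of any object of $\dbbcat(\grmod R)$ is finitely generated graded over $R$ and $R_0 = k$ — and that the displayed rank estimate follows from the four-term exact sequence via an elementary diagram chase.
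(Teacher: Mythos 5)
Your argument is correct: the four-term exact sequence from the mapping-cone triangle gives the rank estimate $\bigl|\rank_k \HH{i}{C^n}_j - \rank_k \HH{i}{D^n}_j\bigr| \leq \rank_k \HH{i+1}{\cone(f^n)}_j + \rank_k \HH{i}{\cone(f^n)}_j$, and $\bsu$-triviality of the cones then forces the $\limsup$ and $\liminf$ equalities. This is precisely the straightforward argument the paper has in mind (it omits the proof, citing \cite[Lemma~5.3]{Iyengar/Ma/Walker:2022a}), so nothing further is needed.
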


We say that a complex of graded $R$-modules is \emph{perfect} if it is quasi-isomorphic to a bounded complex of finitely generated free $R$-modules.

\begin{proposition}
\label{pr:uideal}
Let $(f^n\colon C^n\to D^n)$ be a $\bsu$-equivalence. For  $P$ in $\dbcat(\grmod R)$,  the induced sequence
\[
(\idmap \otimes f_n\colon  P\lotimes_R C^n \to P \lotimes_R D^n)
\]
is also an $\bsu$-equivalence under either of the following conditions: 
\begin{enumerate}[\quad\rm(1)]
\item
The complex $P$ is perfect;
\item
There exists an integer $s$ such that $\HH i{\cone(f_n)}=0$ for all $i<s$ and all $n$.
\end{enumerate}
\end{proposition}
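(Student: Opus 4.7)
The mapping cone of $\idmap \otimes f_n$ is $P \lotimes_R \cone(f_n)$, so the goal is to show that this sequence is $\bsu$-trivial. The main preliminary observation is that the collection of $P \in \dbcat(\grmod R)$ for which $(P \lotimes_R \cone(f_n))_{n}$ is $\bsu$-trivial is closed under shifts and extensions in exact triangles: this is immediate from the three-out-of-two property of $\bsu$-trivial sequences, which in turn follows at once from the long exact sequences in homology together with the definition in~\ref{ch:uequivalence}.

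For case~(1): every perfect complex lies in the triangulated subcategory of $\dbcat(\grmod R)$ generated by the twisted free modules $R(a)$. By the closure property it suffices to treat $P = R(a)$, in which case $R(a) \lotimes_R \cone(f_n) = \cone(f_n)(a)$ is just an internal grading shift of $\cone(f_n)$, and the pointwise $\bsu$-triviality of the sequence is manifestly preserved under such a shift.

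For case~(2): iterating the truncation triangles $\tau_{\le a}P \to P \to \tau_{>a}P \to$ reduces us further, via the closure property, to the case $P = M$ where $M$ is a single finitely generated graded $R$-module. Fix a minimal graded free resolution $F_\bullet \to M$, and consider the hyper-Tor spectral sequence
\[
E^2_{p,q} = \Tor_p^R(M, \HH{q}{\cone(f_n)}) \Longrightarrow \HH{p+q}{M \lotimes_R \cone(f_n)}\,.
\]
The crux of the argument, and the one point where hypothesis~(2) intervenes, is the observation that the $i$-th diagonal of this spectral sequence is \emph{finite}: one has $p \ge 0$ by the definition of Tor, while $q \ge s$ by~(2), so $p + q = i$ forces $q \in [s, i]$.

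Once this finiteness is in hand, the remainder is routine. Writing $F_p = \bigoplus_\alpha R(-c_{\alpha,p})$ as a finite direct sum of twists yields the elementary bound
\[
\rank_k \Tor_p^R(M, \HH{q}{\cone(f_n)})_j \le \sum_\alpha \rank_k \HH{q}{\cone(f_n)}_{j - c_{\alpha,p}}\,,
\]
a finite sum whose individual summands, divided by $u_n$, tend to $0$ as $n\to\infty$ by the $\bsu$-triviality of $(\cone(f_n))_n$. Summing these bounds over the finitely many $(p,q,\alpha)$ contributing to the $i$-th diagonal yields $\rank_k \HH{i}{M \lotimes_R \cone(f_n)}_j / u_n \to 0$, as required.
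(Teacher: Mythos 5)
Your proof is correct. For condition (1) it is essentially the argument in the paper: both reduce, by triangles, to the case where $P$ is a twist of $R$ (the paper phrases this as replacing $P$ by a minimal free complex and inducting on the number of nonzero components). For condition (2) you take a genuinely different route. The paper also replaces $P$ by a free resolution $F$ and, after normalizing $s=0$, observes that since $\HH{i}{\cone(f^n)}=0$ for $i<s$ uniformly in $n$, the inclusion of the brutal truncation $F_{\leqslant i+1}\subseteq F$ induces an isomorphism $\HH{i}{F_{\leqslant i+1}\otimes_R\cone(f^n)}\cong \HH{i}{F\otimes_R\cone(f^n)}$; as $F_{\leqslant i+1}$ is perfect, case (2) is thereby reduced to case (1). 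You instead use canonical truncations of $P$ to reduce to a single module $M$ and then run the hyper-Tor spectral sequence $E^2_{p,q}=\Tor^R_p(M,\HH{q}{\cone(f^n)})$, using the same hypothesis $q\ge s$ (together with $p\ge 0$) to make each diagonal finite and $n$-independent, and the elementary rank bound coming from $F_p\otimes_R-$ to control each $E^2$ term. Both arguments use hypothesis (2) for the same purpose, namely to ensure that only a bounded, $n$-independent portion of the resolution (resp.\ of the spectral sequence) can contribute to a fixed $\HH{i}{(-)}_j$; the paper's truncation trick is shorter and avoids spectral sequences, while yours is self-contained in case (2) and makes the rank estimates explicit. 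The only point worth flagging, which your finiteness remark implicitly covers, is convergence of the spectral sequence: since $\cone(f^n)$ may be represented by a complex concentrated in degrees $\ge s$ and $F$ is a bounded-below complex of finite free modules, the column filtration is bounded below and exhaustive and each diagonal is finite, so convergence is unproblematic.
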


\begin{proof}
Replacing $P$ by its minimal free resolution, we get that $P_i$, the component in homological degree $i$,  is finite free with $P_i=0$ for $i\ll 0$.  The desired conclusion is that the sequence $(P\otimes_R \cone(f_n))$ is $\bsu$-trivial under either of the two conditions.  

When $P$ is perfect, $P_i=0$ also for $i\gg 0$, and  the $\bsu$-triviality can be checked by a straightforward induction on the  number of nonzero components of $P$.

Suppose condition (2) holds, and let $s$ be as given; we may assume $s=0$. Fix an integer $i$. It is easy to see that for each integer $n$, the inclusion $P_{\leqslant i+1}\subseteq P$ of sub-complexes induces an isomorphism
\[
\HH i{P_{\leqslant i+1}\otimes_R\cone(f_n)}  \xrightarrow{\ \cong \ } \HH i{P\otimes_R \cone(f_n)}\,.
\]
Since $P_{\leqslant i+1}$ is perfect, one can invoke the already established part (1).
\end{proof}

In the sequel we need the theory of multiplicities for graded modules. Since the rings we work with are not necessarily standard-graded, we begin by recalling the basic definitions and results in the form we need.

\begin{chunk}
\label{ch:linear-sop}
Let $R$ be a graded $k$-algebra as above. For any finitely generated graded $R$-module $M$ and integer $q\ge \dim_RM$ set
\[
e_q(M)\colonequals q! \lim_{n\to\infty} \frac{\length_R(M/\fm^{n+1} M)}{n^{q}}\,.
\]
This is an integer, and it is equal to $0$ when $q\ge \dim_RM+1$. The \emph{multiplicity} of $M$ is the integer
\[
e(M)\colonequals e_q(M) \quad\text{for $q=\dim_RM$.}
\]
This coincides with the multiplicity of  $M_{\fm}$ as a module over the local ring $R_{\fm}$.

In what follows the focus is on graded $k$-algebras $R$ admitting a \emph{linear} system of parameters; that is to say, a system of parameters in $R_1$. It is convenient to call the $k$-subalgebra generated by a linear system of parameters a \emph{linear Noether normalization} of $R$.  Linear Noether normalizations exist, for instance, when $R$ is standard-graded (homogeneous, in the language of \cite[\S4.1]{Bruns/Herzog:1998}) and $k$ is infinite. The next results allow one to reduce computing multiplicities over rings admitting linear Noether normalizations to the case of standard graded rings. 

\begin{lemma}
\label{le:multiplicity0}
Let $R$ be a finitely generated graded $k$-algebra, and set $\fm\colonequals R_{\geqslant 1}$. For any set of elements $\bsx\subseteq R_1$ satisfying $\sqrt{(\bsx)}=\fm$, the ideal $(\bsx)$ is a reduction of $\fm$; it is a minimal reduction when $\bsx$ is a system of parameters for $R$.
\end{lemma}

\begin{proof}
The claim about minimality follows from \cite[Corollary~8.3.6]{Huneke/Swanson:2006} once we prove that $(\bsx)$ is a reduction of $\fm$, for the analytic spread of $\fm$ equals $\dim R$.

The desired result is clear when $R$ is standard graded because $\fm^n = (\bsx) \fm^{n-1}$ for any $n$ such that $\fm^n \subseteq (\bsx)$, and such an $n$ exists since $\fm$ is the radical of $(\bsx)$.

In what follows given a homogenous ideal $I$ in $R$ such that $\length_R(M/IM)$ is finite, we write $e(I,M)$ for the multiplicity of $M$ with respect to $I$. Namely, the limit of the sequence in \ref{ch:linear-sop} with $\fm$ replaced by $I$; see also \cite[Definition~4.6.1]{Bruns/Herzog:1998}. 

The goal is to prove that $\fm$ is the integral closure of $(\bsx)$; see \cite[Corollary~1.2.5]{Huneke/Swanson:2006}. It suffices to verify that this property holds modulo each minimal prime of $R$; see \cite[Proposition~1.1.5]{Huneke/Swanson:2006}. Thus we may assume $R$ is a domain, and then the desired result is equivalent to: $e(\bsx,R)=e(R)$; this is by Rees' theorem~\cite[Theorem~11.3.1]{Huneke/Swanson:2006}.

There exists an integer $s$ such that the Veronese subring $R^{(s)}$ of $R$ is standard-graded, after rescaling. Thus $\bsx^{s}\colonequals x_1^s,\dots,x_d^s$ is a reduction of the maximal ideal $(R_s)R^{(s)}$ of $R^{(s)}$, and hence one gets that $e(\bsx^s ,R^{(s)})=e(R^{(s)})$. This justifies the second equality below:
\[
e(\bsx^s, R) 
	= s\cdot e(\bsx^s, R^{(s)}) 
	= s\cdot e((R_s), R^{(s)})
	= e((R_s), R)
\]
The first and the third equalities hold as the rank of $R$ over $R^{(s)}$ is $s$; see \cite[Corollary~4.7.9]{Bruns/Herzog:1998}. 
If $f_1,\dots, f_n$ are homogeneous generators of the ideal $\fm$, then  $\fm^s$ is integral over $(f_1^s,\dots,f_n^s)$, which is contained in the ideal $(R_s)$, so we get that
\[
e((R_s), R) \le e((f_1^s,\dots,f_n^s),R) = e(\fm^s,R)\,.
\]
 We conclude that $e(\bsx^s,R)\leq e(\fm^s,R)$ and hence that $e(\bsx, R)\le e(\fm, R)$. The reverse inequality is clear. 
\end{proof}

\begin{lemma}
\label{le:multiplicity}
Let $R$ be a graded $k$-algebra admitting a linear system of parameters $\bsx$. For each $M$  in $\grmod R$ there is an equality
\[
e(M) = (\dim_RM)! \lim_{n\to\infty} \frac{\sum_{i\leqslant n}\rank_k(M_i)}{n^{\dim_RM}}\,.
\]
When $\dim_RM=\dim R$, one has also an equality
\[
e(M) = \sum_{i\geqslant 0} (-1)^i \rank_k \HH i{\bsx; M}\,.
\]
\end{lemma}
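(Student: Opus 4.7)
The plan is to prove the three assertions in order: first that $\bsx$ is a minimal reduction of $\fm$, and then to derive each of the two multiplicity formulas by reducing to the standard-graded polynomial ring $S \colonequals k[\bsx]$, over which $R$ is module-finite and the classical Hilbert-Samuel and Serre theories apply directly.

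For the minimal reduction claim, I would first observe that $(\bsx)R$ is $\fm$-primary because $\bsx$ is a system of parameters, so $R/(\bsx)R$ is graded Artinian and in particular a finite-dimensional $k$-vector space; graded Nakayama then lets us lift a homogeneous $k$-basis to show that $R$ is a finitely generated graded $S$-module. To upgrade ``$\fm$-primary'' to ``reduction of $\fm$'', I would take a homogeneous $y \in \fm$ of degree $e \geq 1$: since $y$ is integral over $S$, it satisfies an equation $y^n + a_1 y^{n-1} + \cdots + a_n = 0$ with $a_i \in S$, and after extracting the degree-$ne$ homogeneous component one may assume $a_i \in S_{ie} = (\bsx)^{ie} \subseteq (\bsx)^i$. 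Thus $y \in \overline{(\bsx)R}$, so $\fm \subseteq \overline{(\bsx)R}$, which is equivalent to $\fm^{n+1} = (\bsx)\fm^n$ for $n \gg 0$. Minimality is immediate since any reduction of $\fm$ must have at least $\dim R$ generators.

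For the first formula, once $(\bsx)R$ is known to be a reduction of $\fm$, the standard invariance of Hilbert-Samuel multiplicity under reductions gives $e(M) = e((\bsx); M)$, the multiplicity of $M$ with respect to the ideal $(\bsx)R$. Viewing $M$ as a graded $S$-module via the module-finite inclusion $S \hookrightarrow R$, both the $R$- and $S$-lengths of the $\fm$-primary module $M/(\bsx)^n M$ coincide with its $k$-dimension (the residue field is $k$ in each case), so the multiplicity computed over $R$ matches the one computed over $S$. Since $S$ is standard graded, classical Hilbert-Serre theory yields that $\sum_{i \leq n}\dim_k M_i$ agrees with a polynomial in $n$ of degree $\dim_R M$ for $n \gg 0$, whose leading coefficient is $e(M)/(\dim_R M)!$; this is the first displayed formula.

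For the Koszul formula, the hypothesis $\dim_R M = \dim R$ ensures that $\bsx$ is a system of parameters for $M$, so each Koszul homology module $\HH i{\bsx; M}$ is annihilated by $(\bsx)$ and is therefore finite-dimensional over $k$. Serre's classical identity $\sum_i (-1)^i \length \HH i{\bsx; M} = e((\bsx); M)$, combined with $e((\bsx); M) = e(M)$ from the previous step, produces the stated equality. The main obstacle I anticipate is the integrality bookkeeping in the reduction step: passing from ``$y$ is integral over the ring $S$'' to ``$y$ is integral over the ideal $(\bsx)R$'' crucially exploits that $\bsx$ lies in degree one, so that $S_{ie} = (\bsx)^{ie}$ automatically places the coefficients in the correct powers of $(\bsx)R$. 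Beyond this point the argument is a routine invocation of well-known facts about multiplicity over standard-graded polynomial rings.
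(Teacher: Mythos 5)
Your proof is correct, but for the crucial first assertion it takes a genuinely different, and more elementary, route than the paper. The paper establishes that $\bsx$ generates a reduction of $\fm$ by passing to a Veronese subring $R^{(s)}$ which is standard graded, comparing $e(\bsx,R)$ and $e(\fm,R)$ through rank considerations over $R^{(s)}$ and the integrality of $\fm^s$ over the $s$-th powers of a generating set, and then deducing the reduction property from the resulting equality of multiplicities. You instead prove the reduction property directly: a homogeneous $y\in\fm$ of degree $w\ge 1$ is integral over $S\colonequals k[\bsx]$ because $R$ is module-finite over $S$, and extracting the degree-$nw$ component of an equation of integral dependence places the coefficients in $S_{iw}\subseteq ((\bsx)R)^{iw}\subseteq ((\bsx)R)^{i}$, so $\fm\subseteq\overline{(\bsx)R}$. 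This exploits exactly the degree-one hypothesis, requires no multiplicity comparison, and bypasses the implication ``equal multiplicities implies reduction''; it is arguably cleaner than the argument in the paper. From that point on the two proofs converge: invariance of multiplicity under reduction and the equality of $R$-, $S$- and $k$-lengths reduce both displayed formulas to the standard graded polynomial ring $S$, where the Koszul identity is Bruns--Herzog, Theorem 4.7.6 (your appeal to Serre's theorem). Two small caveats. First, your minimality remark (any reduction of $\fm$ needs at least $\dim R$ generators) only counts generators; what closes the argument is that, $k$ being infinite, a reduction of $\fm$ generated by $\ell(\fm)=\dim R$ elements is automatically a minimal reduction --- worth stating, though the paper is equally brief here. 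Second, the fact you attribute to ``classical Hilbert--Serre theory'' --- that the leading coefficient of $n\mapsto\sum_{i\leqslant n}\rank_k M_i$ equals $e(\fm_S,M)/q!$ even when $M$ is generated in several degrees --- is precisely the point for which the paper says it could not locate a reference and therefore proves by filtering $M$ so that the subquotients are generated in a single degree; you should either reproduce that filtration or note the squeeze $M_{\geqslant n+b}\subseteq \fm_S^{\,n}M\subseteq M_{\geqslant n+a}$, with $a,b$ the extreme generator degrees, which shows the cumulative Hilbert function and the Hilbert--Samuel function of $M$ over $S$ have the same leading term.
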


\begin{proof}
By Lemma~\ref{le:multiplicity0} the ideal $(\bsx)$ is a minimal reduction of the maximal ideal of $R$, so the multiplicity of $M$ as an $R$-module coincides with its multiplicity when viewed as a module over the $k$-subalgebra of $R$ generated by $\bsx$. Therefore, in the remainder of the proof, we assume that $R$ is a standard graded polynomial ring, over indeterminates $\bsx$. Then the fact the multiplicity is the Euler characteristic of the Koszul homology of $M$ with respect to $\bsx$ is \cite[Theorem~4.7.6]{Bruns/Herzog:1998}.

The first equality is also well-known, but we could not find a suitable reference, so we outline a proof: Set $q\colonequals \dim_RM$ and let 
$e'_q(-)$ be the function defined on the subcategory of finitely generated $R$-modules of dimension at most $q$ by
\[
e'_q(N) \colonequals q!  \lim_{n\to\infty} \frac{\sum_{i\leqslant n}\rank_k(N_i)}{n^q}\,.
\]
The desired result is that $e'_q(-)=e_q(-)$ on this subcategory of $R$-modules. It is clear that equality holds when the module is generated in a single degree. Since any finitely generated module admits a finite filtration by $R$-submodules such that the subquotients in the filtrations are generated in a single degree, it remains to note that both invariants are additive on short exact sequences of modules of dimension at most $q$; this is clear for $e'_q(-)$ and for $e_q(-)$ it is \cite[Corollary~4.7.7]{Bruns/Herzog:1998}. 
\end{proof}

\end{chunk}

\begin{chunk}
\label{ch:Ulrich}
Let $R$ be a graded $k$-algebra admitting a linear Noether normalization and $U$ a finitely generated graded $R$-module. We write $\nu_R(U)$ for its minimal number of generators. The $R$-module $U$ is \emph{Ulrich} if it has the following properties:
\begin{enumerate}[\quad\rm(1)]
\item
$U$ is maximal Cohen-Macaulay and nonzero;
\item
$e(U)=\nu_R(U)$;
\item
$U$ is generated in degree $0$.
\end{enumerate}
When $R$ is a  standard graded polynomial ring, the only Ulrich modules are $R^n$, for some integer $n$; no twists are allowed.
\end{chunk}

Following \cites{Iyengar/Ma/Walker:2022a, Ma:2023}, we introduce a notion lim Ulrich sequences of graded $R$-modules. To that end, we recall some basic properties of finite free complexes.

\begin{chunk}
\label{ch:short-complexes}
Consider a complex 
\[
F\colonequals 0\lra F_n \lra F_{n-1}\lra \cdots \lra F_1 \lra F_0 \lra 0
\]
of finite free $R$-modules  with $\rank_k \HH {}F$ finite and nonzero. The New Intersection Theorem~\cite{Roberts:1987} yields $n\ge \dim R$; if equality holds we say $F$ is a \emph{short} complex. For such an $F$, any maximal Cohen-Macaulay $R$-module $M$ satisfies 
\[
\HH i{F\otimes_RM}=0 \qquad\text{for $i\ge 1$.}
\]
This is by the acyclicity lemma~\cite[Exercise~1.4.24]{Bruns/Herzog:1998}. This property characterizes maximal Cohen-Macaulay modules, by the depth sensitivity of Koszul complexes.
\end{chunk}

\subsection*{Lim Ulrich sequences}
Let $R$ be a graded $k$-algebra admitting a linear system of parameters. Compare  the definition below with that of an Ulrich module ~\ref{ch:Ulrich}, and also with that of lim Ulrich sheaves~\ref{ch:limUsheaves-P}.

\begin{chunk}
\label{ch:lim-Ulrich}
A sequence  $(U^n)_{n\geqslant 1}$ in $\grmod R$  is \emph{lim Ulrich} if each $U^n$ is nonzero and the following properties hold:
\begin{enumerate}[\quad\rm(1)]
\item
For each finite free short complex  $F$ with $\rank_k \HH{}F$ finite one has
\[
\lim_{n\to\infty} \frac{\rank_k \HH i{F\otimes_RU^n}}{\nu_R(U^n)}=0 \qquad\text{for  $i\ge 1$.}
\]
\item
$\lim_{n\to\infty} e_d(U^n)/\nu_R(U^n)=1$ for $d\colonequals \dim R$.
\item
With $\fm\colonequals R_{\geqslant 1}$ the unique homogenous maximal ideal of $R$, one has
\[
\lim_{n\to\infty} \frac{\rank_k(U^n/\fm U^n)_0}{\nu_R(U^n)}=1\,.
\]
\end{enumerate}
 It suffices that condition (1) hold for $F$ the Koszul complex on a \emph{single} system of parameters for $R$; this can be proved along the lines of \cite[Lemma~5.7]{Iyengar/Ma/Walker:2022a}.  Moreover, with $F\colonequals K(\bsx)$  the Koszul complex on a linear system of parameters $\bsx$, parts (1) and (2) above can be expressed more succinctly as follows:
\[
\label{eq:lim-Ulrich}
\lim_{n\to\infty} \frac{\rank_k \HH i{\bsx; U^n}}{\nu_R(U^n)}=
\begin{cases}
1 & \text{when $i=0$}\\
0  & \text{when  $i\ne 0$.}
\end{cases} \tag{1$'$}
\]
Here is an expression of these conditions in the language introduced in \ref{ch:uequivalence}.

\begin{lemma}
\label{le:limU-check}
Let $R$ be a graded $k$-algebra with a linear system of parameters  $\bsx$, and $(U^n)_{n\geqslant 1}$ a lim Ulrich sequence in $\grmod R$. For $\bsu\colonequals (\nu_R(U^n))$, the following sequences of canonical surjections are $\bsu$-equivalences:
\[
 \kos {\bsx}{U^n}  \thra  U^n/\fm U^n\quad\text{and}\quad
  U^n/\fm U^n\thra (U^n/\fm U^n)_0\,.
\]
\end{lemma}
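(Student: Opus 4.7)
The plan is to verify, for each of the two sequences of maps, that the associated sequence of mapping cones is $\bsu$-trivial; equivalently, that $\rank_k\HH i{\cone}_j/\nu_R(U^n)\to 0$ for every $(i,j)$. Since each graded piece contributes nonnegatively to the total $k$-rank of $\HH i{\cone}$, it suffices to control the total rank of each homology module relative to $\nu_R(U^n)$.

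For the first map $f^n\colon \kos{\bsx}{U^n}\thra U^n/\fm U^n$, I would run the long exact sequence in homology attached to the exact triangle defining $\cone(f^n)$. The target is concentrated in homological degree zero and the induced map on $\HH 0$ is the canonical surjection $U^n/\bsx U^n\thra U^n/\fm U^n$ with kernel $\fm U^n/\bsx U^n$, so the homology of the cone is
\[
\HH 0{\cone(f^n)}=0,\qquad \HH 1{\cone(f^n)}\cong \fm U^n/\bsx U^n,\qquad \HH i{\cone(f^n)}\cong \HH{i-1}{\bsx;U^n}\text{ for }i\ge 2.
\]
The cases $i\ge 2$ are handled at once by condition (1$'$) in \ref{ch:lim-Ulrich}. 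For the degree-one term I would combine the additivity of $\rank_k$ along the short exact sequence $0\to \fm U^n/\bsx U^n\to U^n/\bsx U^n\to U^n/\fm U^n\to 0$ with the $i=0$ instance of (1$'$), which says precisely that $\rank_k(U^n/\bsx U^n)/\nu_R(U^n)\to 1$; this forces $\rank_k(\fm U^n/\bsx U^n)/\nu_R(U^n)\to 0$.

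For the second map $g^n\colon U^n/\fm U^n\thra (U^n/\fm U^n)_0$, which is a surjection of graded $k$-vector spaces, the mapping cone is quasi-isomorphic, up to a shift, to the kernel $\bigoplus_{j\ne 0}(U^n/\fm U^n)_j$, whose total $k$-rank equals $\nu_R(U^n)-\rank_k(U^n/\fm U^n)_0$. Condition (3) of \ref{ch:lim-Ulrich} says exactly that this quantity, divided by $\nu_R(U^n)$, tends to zero.

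The proof is essentially an unpacking of the definitions; the only point that requires care is recognizing that the reformulation (1$'$) is doing double duty, simultaneously controlling the higher Koszul homologies (for the terms $\HH i{\cone(f^n)}$ with $i\ge 2$) and forcing $\rank_k(U^n/\bsx U^n)/\nu_R(U^n)\to 1$, which is what accounts in turn for the vanishing, in the limit, of the ``excess'' contribution $\fm U^n/\bsx U^n$ appearing in degree one of the cone.
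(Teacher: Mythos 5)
Your proposal is correct and follows essentially the same route as the paper: identify the homology of the mapping cones (higher Koszul homology in degrees $\geq 2$, the quotient $\fm U^n/\bsx U^n$ in degree $1$, and $(U^n/\fm U^n)_{j\ne 0}$ for the second map) and kill the ratios using condition (1$'$) and condition (3) of \ref{ch:lim-Ulrich}. The only cosmetic difference is that the paper factors the first surjection through $U^n/\bsx U^n$ and composes two $\bsu$-equivalences, whereas you compute the cone of the composite directly; the underlying estimates are identical.
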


\begin{proof}
Consider the canonical morphisms
\[
f^n\colon\kos {\bsx}{U^n} \thra U^n/\bsx U^n \quad\text{and}\quad g^n\colon U^n/\bsx U^n\thra U^n/\fm U^n\,.
\]
We prove that the sequences $(f^n)$ and $(g^n)$ are $\bsu$-equivalences, which implies that the composition $(g^nf^n)$  is also a $\bsu$-equivalence, as claimed.

It is easy to see that the homology of the cone of $f^n$ satisfies
\[
\HH {i+1}{\cone(f^n)} \cong 
\begin{cases}
\HH {i}{\bsx; U^n} & \text{for $i\ge 1$.} \\
0 & \text{otherwise}
\end{cases}
\]
Thus $(f^n)$ is a $\bsu$-equivalence if and only if
\[
\lim_{n\to \infty}\frac{\rank_k\HH {i}{\bsx;U^n}_j}{u_n} = 0 \quad\text{for $i\ge 1$ and $j\in\bbZ$.}
\]
It remains to observe that the lim Ulrich property expressed in condition \eqref{eq:lim-Ulrich}, in \ref{ch:lim-Ulrich}, implies the limit above is $0$, that is to say, $(f^n)$ is a $\bsu$-equivalence, as desired.

For each $n$ one has that
\[
\rank_k (\fm U^n/\bsx U^n) =  \rank_k (U^n/\bsx U^n) - u_n 
\]
so condition (2) defining the lim Ulrich property of $(U^n)_{n\geqslant 1}$ yields
\[
\lim_{n\to \infty} \frac{\rank_k (\fm U^n/\bsx U^n)}{u_n} = 0\,.
\]
The cone of the morphism $g^n\colon U^n/\bsx U^n\to U^n/\fm U^n$ satisfies 
\[
\HH {i}{\cone(g^n)} \cong 
\begin{cases}
(\fm U^n)/(\bsx U^n) & \text{for $i= 1$} \\
0 & \text{otherwise.}
\end{cases}
\]
It follows that the sequence of maps $(g^n)$ is a $\bsu$-equivalence.

This completes the proof that the sequence $(g^nf^n)$ is a $\bsu$-equivalence.

Condition (3) in \ref{ch:lim-Ulrich} is equivalent to the condition that
\[
\lim_{n\to\infty} \frac{\rank_k (U^n/\fm U^n)_{i\ne 0}}{u_n} = 0\,.
\]
Since the homology of the mapping cone of the surjection 
\[
(U^n/\fm U^n) \to (U^n/\fm U^n)_0
\]
is $(U^n/\fm U^n)_{i\ne 0}$ in degree 1 and zero otherwise, it follows that the sequence above is a $\bsu$-equivalence, as desired.
\end{proof}

\begin{chunk}
\label{ch:limU-check}
It is clear from the proof of Lemma~\ref{le:limU-check} that when $(U^n)_{n\geqslant 0}$ is a lim Ulrich sequence, the sequences $(f^n)$ and $(g^n)$ are $\bsu$-equivalent in the stronger sense that 
\[
\lim_{n\to \infty}\frac{\rank_k \HH {i}{\cone(f^n)}}{\nu_R(U^n)} = 0 = \lim_{n\to \infty}\frac{\rank_k \HH {i}{\cone(g^n)}}{\nu_R(U^n)}\,.
\]
That is to say, the ranks of the homology modules, and not just their graded pieces, of the cones are asymptotically zero with respect to $(\nu_R(U^n))$. These conditions are equivalent to the lim Ulrich property of the sequence $(U^n)_{\geqslant 1}$. 

Moreover, it is not hard to verify that if the sequence $(g^nf^n)$ is a $\bsu$-equivalence (in either sense) then so are the sequences 
$(f^n)$ and $(g^n)$; the converse is clear.
\end{chunk}

\end{chunk}

\begin{lemma}
\label{le:perfect}
Let $R$ be a graded $k$-algebra with a linear system of parameters  $\bsx$, and $(U^n)_{n\geqslant 1}$  a lim Ulrich sequence in $\grmod R$. For any $P$ in $\dbbcat(\grmod R)$ and integers $i,j$ there is an equality
\[
\lim_{n \to \infty} \frac{\rank_k \HH i{P  \lotimes_R \kos{\bsx}{ U^n}}_j}{\nu_R(U_n)} = \rank_k \HH i{P \lotimes_R k}_j\,.
\]
\end{lemma}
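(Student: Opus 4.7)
The plan is to reduce, via a chain of $\bsu$-equivalences with $\bsu\colonequals (\nu_R(U^n))$, the Koszul complex $\kos{\bsx}{U^n}$ to the much simpler object $(U^n/\fm U^n)_0$. The point is that, as a graded $R$-module, $(U^n/\fm U^n)_0$ is a direct sum of $s_n\colonequals \rank_k(U^n/\fm U^n)_0$ copies of $k$ concentrated in degree zero; hence $P\lotimes_R (U^n/\fm U^n)_0 \cong (P\lotimes_R k)^{s_n}$.

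First, by Lemma~\ref{le:limU-check}, together with the strengthening recorded in~\ref{ch:limU-check}, the composition of canonical surjections
\[
\psi^n\colon \kos{\bsx}{U^n} \thra U^n/\fm U^n \thra (U^n/\fm U^n)_0
\]
is a $\bsu$-equivalence. I would then transfer this across the functor $P \lotimes_R -$ using Proposition~\ref{pr:uideal}(2). To invoke that result one needs a uniform lower bound on the homological support of $\cone(\psi^n)$. Since both $\kos{\bsx}{U^n}$ and $(U^n/\fm U^n)_0$ are concentrated in non-negative homological degrees, so is $\cone(\psi^n)$; moreover, the map $U^n/\bsx U^n \thra (U^n/\fm U^n)_0$ induced on $\HH 0$ is surjective, which forces $\HH 0{\cone(\psi^n)}=0$ by the long exact sequence. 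Thus condition (2) of Proposition~\ref{pr:uideal} applies with $s=1$, and so
\[
P \lotimes_R \kos{\bsx}{U^n} \lra P \lotimes_R (U^n/\fm U^n)_0
\]
is itself a $\bsu$-equivalence.

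Using the identification $(U^n/\fm U^n)_0 \cong k^{s_n}$, one has
\[
\rank_k \HH i{P\lotimes_R (U^n/\fm U^n)_0}_j \,=\, s_n\cdot \rank_k \HH i{P\lotimes_R k}_j\,,
\]
and condition (3) of~\ref{ch:lim-Ulrich} gives $s_n/\nu_R(U^n)\to 1$. Dividing the displayed quantity by $\nu_R(U^n)$ and letting $n\to\infty$, the ratio converges to $\rank_k \HH i{P\lotimes_R k}_j$. The asserted limit formula then follows from Proposition~\ref{pr:uiso} applied to the $\bsu$-equivalence produced above.

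The main obstacle is verifying the hypothesis of Proposition~\ref{pr:uideal}(2); once that $\bsu$-equivalence is secured, the remainder amounts to identifying $(U^n/\fm U^n)_0$ with a direct sum of copies of $k$ in degree zero and invoking the asymptotic generator-count in~\ref{ch:lim-Ulrich}(3).
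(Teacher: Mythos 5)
Your argument is correct and follows essentially the same route as the paper's proof: the composite surjection $\kos{\bsx}{U^n}\thra (U^n/\fm U^n)_0$ is a $\bsu$-equivalence by Lemma~\ref{le:limU-check}, Proposition~\ref{pr:uideal}(2) transfers it across $P\lotimes_R-$, and the identification of $(U^n/\fm U^n)_0$ with copies of $k$ in degree zero together with condition (3) of \ref{ch:lim-Ulrich} and Proposition~\ref{pr:uiso} finishes the computation. The only cosmetic difference is that you verify the vanishing hypothesis of Proposition~\ref{pr:uideal}(2) on the cone directly (even getting $s=1$), whereas the paper simply notes that both complexes have no homology in negative degrees.
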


\begin{proof}
Set $u_n\colonequals \nu_R(U^n)$ and $\bsu\colonequals (u_n)$.  Set 
\[
F^n\colonequals \kos{\bsx}{U^n} \quad\text{and}\quad  G^n\colonequals (U^n/\fm U^n)_0\,,
\]
and let $h^n\colon F^n\to G^n$ be the composition of maps 
\[
\kos{\bsx}{U^n}\to U^n/\fm U^n \to (U^n/\fm U^n)_0\,.
\]
A composition of $\bsu$-equivalences is a $\bsu$-equivalence, so it follows from  Lemma~\ref{le:limU-check} that the sequence $(h^n)$ is a $\bsu$-equivalence. Evidently for $i<0$ and all $n$ one has
\[
\HH i{\kos{\bsx}{U^n}}=0\qquad\text{and}\qquad \HH i{G^n}=0 \,.
\]
Thus Proposition~\ref{pr:uideal}(2) applies to yield that the sequence $(P\lotimes_R h^n)$ is also a $\bsu$-equivalence.  Since $R$ acts on $G^n$ via the surjection $R\to k$, one gets isomorphisms
\[
\HH i{P\lotimes_R G^n} \cong \HH i{(P\lotimes_Rk) \otimes_k G^n} \cong \HH i{P\lotimes_Rk} \otimes_k G^n\,.
\]
Since $G^n$ lives in internal degree $0$, this justifies the second equality below.
\begin{align*}
\lim_{n \to \infty} \frac{\rank_k \HH i{P\lotimes_R G^n}_j }{u_n} 
	&= \lim_{n \to \infty} \frac{\rank_k \HH i{P\lotimes_R G^n}_j }{\rank_k(G^n)} \\
	&=\rank_k \HH i{P\lotimes_R k}_j \,.
\end{align*}
The first one holds because the surjection $U^n/\fm U^n\to G^n$ is a $\bsu$-equivalence, by Lemma~\ref{le:limU-check},  so 
 Proposition~\ref{pr:uiso} applies.
\end{proof}

\begin{lemma}
\label{le:RtoX}
Let $k$ be a field and $R$ a graded $k$-algebra admitting a linear system of parameters.  Set $X\colonequals \proj R$ and  $\mcL\colonequals \widetilde{R(1)}$. The coherent sheaf $\mcL$ is invertible, ample, globally generated, and satisfies $\mcL^{\otimes t}=\widetilde{R(t)}$ for each $t$.
\end{lemma}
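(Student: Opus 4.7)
The plan is to exploit the linear Noether normalization associated to a linear system of parameters. Let $\bsx = x_1,\dots,x_d$ be a linear system of parameters for $R$, where $d = \dim R$. Since the $x_i$ are $d$ elements forming a system of parameters in a $d$-dimensional ring, they are algebraically independent over $k$, so the $k$-subalgebra $A := k[\bsx] \subseteq R$ is a standard-graded polynomial ring in $d$ variables, and $R$ is a finitely generated graded $A$-module. The inclusion $A \hookrightarrow R$ is a graded homomorphism, and because $\sqrt{(x_1,\dots,x_d)R} = \fm$, the induced morphism of schemes
\[
\pi \colon X = \proj R \lra \proj A = \bbP^{d-1}_k
\]
is defined on all of $\proj R$; it is finite since $R$ is finite over $A$.

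The core computation is the identification $\mcL \cong \pi^* \mcO_{\bbP^{d-1}}(1)$. This rests on the standard pullback formula for sheaves on $\proj$: for any graded $A$-module $M$, there is a natural isomorphism $\pi^* \widetilde{M} \cong \widetilde{M \otimes_A R}$ of sheaves on $\proj R$, valid when $\pi$ is defined everywhere. Applying this with $M = A(t)$ (a free $A$-module of rank one, shifted) yields
\[
\pi^* \mcO_{\bbP^{d-1}}(t) \;=\; \pi^* \widetilde{A(t)} \;\cong\; \widetilde{A(t) \otimes_A R} \;=\; \widetilde{R(t)}.
\]
Specializing to $t=1$ gives $\mcL \cong \pi^* \mcO_{\bbP^{d-1}}(1)$, and the full formula $\mcL^{\otimes t} \cong \widetilde{R(t)}$ follows either by this display or by combining the $t=1$ case with the fact that pullback commutes with tensor powers.

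Once $\mcL$ is expressed as $\pi^* \mcO_{\bbP^{d-1}}(1)$ along a finite morphism, the remaining properties are immediate from classical facts. Invertibility is preserved under pullback, so $\mcL$ is a line bundle. Global generation is preserved under pullback, and $\mcO_{\bbP^{d-1}}(1)$ is globally generated by the coordinate sections $x_1,\dots,x_d$, so $\mcL$ is globally generated (by the images $\pi^*(x_i)$, namely the $x_i$ themselves viewed in $R_1$). Ampleness is preserved under pullback along a \emph{finite} morphism, giving ampleness of $\mcL$; alternatively, this is precisely the content of the implication (2)$\Rightarrow$(1) in Proposition~\ref{pr:ampleness}, which is already in hand.

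There is essentially no obstacle — the lemma is a bookkeeping consequence of the existence of a linear Noether normalization. The only subtle point worth noting is that defining $\pi$ on all of $\proj R$ really uses that $\bsx$ is a system of parameters (so that $(\bsx)R$ is $\fm$-primary), not merely that the $x_i$ lie in $R_1$.
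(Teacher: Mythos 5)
Your proof is correct, and it takes a genuinely different route from the paper's. The paper argues intrinsically on $X=\proj R$: invertibility and the formula $\mcL^{\otimes t}=\widetilde{R(t)}$ come from the Hartshorne II.5.12-style local computation on the cover $\{D_+(x_i)\}$, global generation is read off directly from the sections $x_1,\dots,x_d$ having no common zero, and ampleness is obtained via the Veronese trick (the $t$-th Veronese of $R$ is standard graded for $t\gg 0$, so $\mcL^{\otimes t}$ is very ample). You instead construct the finite morphism $\pi\colon \proj R\to \proj A=\bbP^{d-1}_k$ from the linear Noether normalization at the outset, identify $\mcL\cong\pi^*\mcO_{\bbP^{d-1}}(1)$, and pull back all three properties along $\pi$, invoking Proposition~\ref{pr:ampleness}(2)$\Rightarrow$(1); there is no circularity, since that implication is independent of the present lemma. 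Your route makes the finite linear map to projective space --- which the paper extracts only afterwards from Proposition~\ref{pr:ampleness} --- explicit from the start, and it yields $\mcL^{\otimes t}\cong\widetilde{R(t)}$ cleanly from compatibility of pullback with tensor powers. The cost is that two steps you label as standard deserve justification in this non-standard-graded setting: finiteness of $\pi$ (module-finiteness of $(R_{x_i})_0$ over $(A_{x_i})_0$ on the charts, which uses $\deg x_i=1$), and the pullback formula $\pi^*\widetilde{M}\cong\widetilde{M\otimes_A R}$, which in textbook form is usually stated for rings generated in degree one; its verification here is exactly the local computation on the $D_+(x_i)$ that the paper cites, and it goes through because those charts cover $\proj R$ --- the point you correctly isolate, namely that $\bsx$ is a system of parameters. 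So there is no gap; the arguments mainly differ in whether ampleness is deduced from finiteness of $\pi$ or from the Veronese.
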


\begin{proof}
Let $\bsx\colonequals x_1,\dots,x_d$ be a linear system of parameters for $R$. To see that $\mcL$ is invertible and $\mcL^{\otimes t}=\widetilde{R(t)}$, one can use the same  argument as in the proof of \cite[Proposition 5.12 (a) (b)]{Hartshorne:1977}. The point  is that the affine open sets $\{D_+(x_i)\}_{i=1}^d$ cover $X$ since $\bsx$ is a linear system of parameters for $R$. The sheaf $\mcL$ is globally generated since the $\bsx$ are global sections of  $\mcL$ that have no common zero locus, again since $\bsx$ is a system of parameters. For $t\gg 0$, the $t$'th Veronese  of $R$ is standard graded, so $\mcL^{\otimes t}$ is very ample, and hence $\mcL$ is ample; see \cite[Theorem~II.7.6]{Hartshorne:1977}.
\end{proof}

\begin{chunk}
\label{ch:regularity}
Let $R$ be a graded $k$-algebra admitting a linear Noether normalization, with irrelevant maximal ideal $\fm$, and $M$ a finitely generated $R$-module. We write $\lch i{\fm}M$ for the $i$th local cohomology module of $M$ supported on $\fm$; see \cite[p.~143]{Bruns/Herzog:1998}. These are graded $R$-modules, with ${\lch i{\fm}M}_t=0$ for $t\gg 0$.  The  Castelnuovo-Mumford \emph{regularity} of $M$ is the integer
\[
\reg_R M \colonequals \max\{i+t\mid {\lch i{\fm}M}_{t}\ne 0 \}\,.
\]
If  $P\to R$ is a finite map of graded $k$-algebras, one has an isomorphism
\[
\lch i{P_{\geqslant 1}}M\cong \lch i{\fm}M
\]
so $\reg_RM=\reg_PM$.  Thus the regularity of $M$ can be computed with respect any linear Noether normalization for $R$. 

Set $X\colonequals \proj R$, and let $\mcF$ be a coherent sheaf on $X$ such that $\depth \mcF_x\ge 1$ at each closed point $x\in X$.
Set 
\[
M\colonequals \varGamma_*(\mcF) \colonequals \bigoplus_{t\in\bbZ} \mcF(t)\,.
\]
The condition on depth of $\mcF$ ensures that this is a finitely generated $R$-module; see Lemma~\ref{le:sheaf-depth}. One then has an equality
\begin{equation}
\label{eq:regularity}
\reg_R M = \max\{i+t \mid \CH {i-1}X{\mcF(t)}\ne 0 \text{ and } i\ge 2 \}\,.
\end{equation}
This holds because 
\begin{equation} 
\label{eq:sheaf-lch}
\begin{aligned}
&\lch 0{\fm}M=0=\lch 1{\fm}M, \quad\text{and} \\
&{\lch i{\fm}M}_{t}\cong \CH {i-1}X{\mcF(t)}  \quad \text{for  $i\ge 2$.}
\end{aligned}
\end{equation}

\end{chunk}

Next we apply the Theorem~\ref{th:limU-sheaves} to construct a lim Ulrich sequences in $\grmod R$. 

\begin{theorem}
\label{th:limU-graded-modules}
Let $k$ be an infinite perfect field of positive characteristic. Let $R$ be a graded $k$-algebra with $\dim R\ge 2$ and admitting a linear Noether normalization. There exists a lim Ulrich sequence $(U^n)_{n\geqslant 0}$  in $\grmod R$ satisfying the following conditions:
\begin{enumerate}[\quad\rm(1)]
\item
For each $n$, $\depth_R(U^n)\ge 2$ and  $\depth_{R_\fp}  (U^n_\fp) \ge \depth R_\fp$ for $\fp$ in $\proj R$. 
\item
There exist integers $t_0$ and $t_1$ such that for each $n$ one has
\[
U_j^n=0 \text{ for $j<t_0$ } \quad\text{and}\quad \reg U^n \le t_1\,.
\]
\end{enumerate}
\end{theorem}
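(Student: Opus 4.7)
The plan is to obtain $(U^n)$ as graded section modules of a lim Ulrich sheaf sequence on $X \colonequals \proj R$. By Lemma~\ref{le:RtoX}, $(X,\widetilde{R(1)})$ is a Noether pair with $\dim X = \dim R - 1 \geq 1$, so Theorem~\ref{th:limU-sheaves} produces a lim Ulrich sequence $(\mcF_n)$ of coherent sheaves on $X$ whose stalks have depth $\geq 1$ at closed points and depth $\geq \depth \mcO_{X,x}$ otherwise. Fix a finite dominant linear map $\pi \colon X \to \bbP^m$ and define
\[
U^n \colonequals \bigoplus_{t\in\bbZ} \CH 0 X {\mcF_n(t)}.
\]
By Lemma~\ref{le:sheaf-depth}, each $U^n$ is a finitely generated graded $R$-module, and by the projection formula $(U^n)_t = \gamma_{0,t}(\pi_*\mcF_n)$. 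Condition (2) of \ref{ch:limUsheaves-P} applied to $(\pi_*\mcF_n)$ yields a uniform integer $t_0$ with $(U^n)_t = 0$ for $t \leq t_0$, and combining \eqref{eq:regularity} with condition (3) of \ref{ch:limUsheaves-P} gives $\reg_R U^n \leq t_1 + m$ uniformly in $n$; this verifies condition (2) of the theorem.

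The depth statement (1) is then immediate from \eqref{eq:sheaf-lch}: the vanishings $\lch 0 \fm{U^n} = 0 = \lch 1 \fm{U^n}$ give $\depth_R U^n \geq 2$, and for $\fp \in \proj R$ corresponding to $x \in X$, the graded-to-local dictionary identifies $\depth_{R_\fp}(U^n_\fp)$ with $\depth_{\mcO_{X,x}}(\mcF_n)_x \geq \depth \mcO_{X,x} = \depth R_\fp$.

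The main work is verifying the lim Ulrich property of \ref{ch:lim-Ulrich}. Let $\bsx$ be a linear system of parameters for $R$ and set $r_n \colonequals \gamma_{0,0}(\pi_*\mcF_n)$. Since $\bsx \subset R_1$ has no common zero on $X$, the sheaf-level Koszul complex $\kos{\bsx}{\mcF_n}$ is exact, so the Koszul homology $\HH i{\bsx; U^n}$ is controlled entirely by the cohomology groups $\CH j X {\mcF_n(t)}$ through a standard hypercohomology argument. Applying Theorem~\ref{th:limU} to $(\pi_*\mcF_n)$ with $\mcG$ running over the twists of $\mcO$ appearing in the Koszul complex, one gets $\rank_k \HH i{\bsx; U^n}/r_n \to 0$ for $i \geq 1$ and $\rank_k \HH 0{\bsx; U^n}/r_n \to 1$, which is condition (1$'$). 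Granted this, Lemma~\ref{le:multiplicity} gives $e_d(U^n)/r_n \to 1$. Finally, the uniform bounds $t_0$ and $t_1$ confine $U^n/\fm U^n$ to a fixed band of degrees, and condition (4) of \ref{ch:limUsheaves-P}, again via Theorem~\ref{th:limU}, forces the components of $U^n/\fm U^n$ outside degree $0$ to be asymptotically negligible relative to $r_n$. Combining, one gets $\nu_R(U^n)/r_n \to 1$, which together with the previous estimates yields conditions (2) and (3) of \ref{ch:lim-Ulrich}.

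The principal obstacle is this last step: converting the sheaf-theoretic asymptotic equivalence $\pi_*\mcF_n \sim \mcO_{\bbP^m}^{r_n}$ from Theorem~\ref{th:limU} into the statement that $U^n/\fm U^n$ is asymptotically concentrated in degree $0$. The delicate point is that $(U^n)_t$ for $t > 0$ grows polynomially in $t$, so controlling the minimal-generator contribution in each nonzero degree $t$ requires showing the multiplication $R_t \otimes_k (U^n)_0 \to (U^n)_t$ is asymptotically surjective modulo an error negligible compared to $r_n$; this reduces to applying the cone argument of Theorem~\ref{th:limU} degree by degree within the fixed band $[t_0+1, t_1+m]$ that the uniform lower bound and regularity bound supply.
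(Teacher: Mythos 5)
Your proposal follows the paper's proof: the paper likewise takes the lim Ulrich sheaf sequence $(\mcF_n)$ on the Noether pair $(\proj R,\widetilde{R(1)})$ provided by Theorem~\ref{th:limU-sheaves}, sets $U^n\colonequals\varGamma_*(\mcF_n)$, and deduces conditions (1) and (2) exactly as you do, from \eqref{eq:sheaf-lch} together with conditions (2)--(3) of \ref{ch:limUsheaves-P}. The paper's write-up essentially stops there, leaving the module-level lim Ulrich verification implicit, and your sketch of that step --- negligibility of $\HH i{\bsx;U^n}$ for $i\ge 1$ and of generators in degrees $\ne 0$ relative to $\gamma_{0,0}(\pi_*\mcF_n)$, obtained from Theorem~\ref{th:limU} (equivalently condition (4) of \ref{ch:limUsheaves-P}) applied to the Koszul strands, confined to the uniform degree band supplied by $t_0$ and the regularity bound --- is the intended argument and is sound.
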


\begin{proof}
Set $X\colonequals \proj R$ and $\mcL$ the coherent sheaf on $X$ defined by $R(1)$. Since $R$ admits a linear system of parameters, Lemma~\ref{le:RtoX} yields that ($X,\mcL$) is a Noether pair. Let $(\mcF_n)_{n\geqslant 0}$ be the lim Ulrich sequence of sheaves on $X$ given by Theorem~\ref{th:limU-sheaves}, and set
\[
U^n \colonequals \varGamma_{*}(\mcF_n) \quad\text{for $n\ge 1$.}
\]
This is an $R$-module, and it is finitely generated because $\depth( \mcF_n)_x\ge 1$ at all closed points  $x\in X$, by construction; see Lemma~\ref{le:sheaf-depth}.

(1) From \eqref{eq:sheaf-lch} one gets $\depth_R U^n\ge 2$. For any $x\in X$, one has an isomorphism $((\mcF_n)_x)\otimes_k k(t)\cong (U^n)_\fp$, where $\fp\in \proj R$ corresponds to $x$.  Thus  the claim about depths follows from the corresponding property for the sheaves $\mcF_n$.

(2) Since $U^n_t = \CH 0X{\mcF_n(t)}$, property (2) in \ref{ch:limUsheaves-P} implies $U^n_j=0$ for $j<t_0$. The claim about regularity is immediate from \eqref{eq:sheaf-lch}, given property (3) in \ref{ch:limUsheaves-P}.
\end{proof}

Given the preceding theorem, one can argue as in the proof of \cite[Theorem A]{Ma:2023}, see also \cite[Theorem B]{Ma:2023}, to obtain the following result.

\begin{corollary}
With $R$ as above,  if $(R_\fm,\fm R_\fm) \to S$ is a flat local map of noetherian local rings, then $e(R)\le e(S)$.\qed
\end{corollary}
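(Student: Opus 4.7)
The plan is to invoke the strategy of Ma's proof of Lech's conjecture under the assumption that $R$ admits a lim Ulrich sequence, as carried out in \cite[Theorem~A]{Ma:2022}, now made applicable to our class of rings via Theorem~\ref{th:limU-graded-modules}. Let $(U^n)_{n\ge 1}$ be a lim Ulrich sequence in $\grmod R$ and set $b_n\colonequals\nu_R(U^n)$. Form the base change $V^n\colonequals U^n_\fm\otimes_{R_\fm}S$, a finitely generated $S$-module. By flatness of $R_\fm\to S$, one has $\nu_S(V^n)=b_n$; moreover, since the lim Ulrich conditions force $e_d(U^n)>0$ for $n$ large (so $U^n_\fm$ has full $R_\fm$-support), the associativity formula for multiplicities along flat local maps gives $e_S(V^n)=e_R(U^n)\cdot e(S/\fm S)$, with $\dim_S V^n=\dim S$.

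Next I would transfer the lim Ulrich/asymptotic Cohen-Macaulay property from $(U^n)$ to $(V^n)$. Fix a linear system of parameters $\bsx$ of $R$; by Lemma~\ref{le:multiplicity} this is a minimal reduction of $\fm$. Choose $\bst\subseteq\mathfrak{n}$ lifting a system of parameters for the closed fiber $S/\fm S$, so that $(\bsx,\bst)$ is a system of parameters for $S$. Flatness gives $H_i(\bsx;V^n)\cong H_i(\bsx;U^n)\otimes_{R_\fm}S$, so the vanishing $\ell_R(H_i(\bsx;U^n))/b_n\to 0$ for $i\ge 1$ (from the lim Ulrich property) transfers, via the Koszul spectral sequence $H_p(\bst;H_q(\bsx;V^n))\Rightarrow H_{p+q}(\bsx,\bst;V^n)$, to the analogous vanishing over $S$ with respect to $(\bsx,\bst)$. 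The upshot is that $(V^n)$ satisfies an asymptotic Cohen-Macaulay/Ulrich condition over $S$, which is the crux of the argument.

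Finally I would run the multiplicity comparison. For a Cohen-Macaulay $S$-module $M$ of full support and a minimal reduction $\mathfrak{q}$ of $\mathfrak{n}$, one has $\nu_S(M)\le\ell_S(M/\mathfrak{q}M)=e(\mathfrak{q};M)=e(S)\cdot r_S(M)$, where $r_S(M)$ denotes the generalized rank $e_S(M)/e(S)$. Applying the asymptotic form of this inequality to $V^n$ yields $b_n\le (e(S)+o(1))\cdot r_S(V^n)$. Combined with the lim Ulrich identity $b_n\sim e_R(U^n)=e(R)\cdot r_R(U^n)$ and the flat base-change equality $r_S(V^n)=r_R(U^n)$, passing to the limit yields $e(R)\le e(S)$. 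The hardest part is executing the transfer in the middle step rigorously, with enough quantitative control on the Koszul spectral sequence to convert the asymptotic Cohen-Macaulay property into an asymptotic Ulrich one; handling the case where $R$ or $S$ fails to be a domain is a secondary issue, resolved by working with generalized ranks weighted by multiplicities of minimal primes of maximal dimension, in the formalism of \cite{Ma:2022}.
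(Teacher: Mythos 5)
Your proposal takes essentially the same route as the paper: the paper's proof of this corollary consists precisely of invoking the lim Ulrich sequence supplied by Theorem~\ref{th:limU-graded-modules} and then arguing as in \cite[Theorem~A]{Ma:2022} (see also Theorem~B there), which is exactly what you do. The base-change and Koszul-homology details you sketch are the internals of Ma's argument rather than a new approach, so no further comparison is needed.
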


\section{Boij-S\"oderberg theory}
In this section we recall  basics of Boij-S\"oderberg theory, due mainly to Boij, Eisenbud, Erman,  Schreyer, and S\"oderberg~\cites{Boij/Soderberg:2012, Boij/Soderberg:2008, Eisenbud/Erman:2017, Eisenbud/Schreyer:2010, Eisenbud/Schreyer:2009}.

\subsection*{Codimension sequences and degree sequences}
Throughout we fix a non-negative integer $d$, soon to be the dimension of a graded ring. By a \emph{codimension sequence} (for $d$) we mean a non-decreasing sequence $\bsc\colonequals (c_i)_{i\in \bbZ}$ where each $c_i$  is an element of the set $\{\varnothing,0,\dots,d,\infty\}$, with ordering
\[
\varnothing < 0 < 1 < \cdots < d< \infty\,.
\]
For any non-negative integer $c$ the constant sequence $(c)_{i\in\bbZ}$ is also denoted $c$.  We consider the collection of codimension sequences with the natural partial order: 
\[
(c_i)_{i\in\bbZ} \le (c'_i)_{i\in\bbZ}\qquad\text{if $c_i \le c'_i$ for each $i$.}
\]
A \emph{degree sequence} is an increasing sequence of integers $t_a < t_{a+1} <\cdots < t_{a+l}$, with $0\le l\le d$.  It is convenient to view it as an infinite sequence
\[
\bst\colonequals (\dots, -\infty, -\infty, t_a,\dots, t_{a+l}, \infty,\infty,\dots)
\]
with $t_i$ in position $i$. The integer $l$ is the \emph{codimension} of $\bst$, denoted $\codim(\bst)$; we define $\inf(\bst) \colonequals a$. The sequence $\bst$ is \emph{compatible} with a codimension sequence $\bsc$ if 
\[
0\le c_a \le \codim(\bst) \le c_{a+1} \qquad\text{for $a=\inf(\bst)$.}
\]
In what follows we  consider the $\bbQ$-vector space 
\begin{equation}
\label{eq:Vvec}
V\colonequals \bbQ^{(\bbZ\times \bbZ)} =  \bigoplus_{i,j\in\bbZ}\bbQ\,.
\end{equation}
Each degree sequence $\bst$ determines an element $\beta(\bst)$ of $V$ given by
\begin{equation}
\label{eq:HK}
\beta(\bst)_{i,j}\colonequals  
\begin{cases}
\frac{ \prod_{n\ne a} | t_n - t_a|}{ \prod_{n\ne i} |t_n - t_i|} & \text{for $a\le i\le a+l$ and $j=t_i$,} \\
0 &\text{for all other values of $i,j$.}
\end{cases}
\end{equation}
Here $a\colonequals \inf(\bst)$ and $l=\codim(\bst)$. Given a codimension sequence $\bsc$, we set
\begin{equation}
\label{eq:codim-cone}
\bcone^{\bsc}_{d}\colonequals \left\{
\begin{aligned}
&\text{cone in $V$ spanned by $\beta(\bst)$ as $\bst$ ranges}\\
&\text{over degree sequences compatible with $\bsc$}
\end{aligned}
\right\}\,.
\end{equation}
This is the smallest subset of $V$ containing the $\beta(\bst)$, for the permitted $\bst$, and closed under addition and multiplication by non-negative rational numbers.

\subsection*{The finite topology on $V$}
Given a finite subset $Y \subset \bbZ\times \bbZ$ we write $V_Y$ for the subspace of $V$ spanned by the corresponding coordinate vectors. Thus a vector $v\in V$ is in $V_Y$ if and only if $v_{i,j}=0$ for $(i,j)\notin Y$.  We topologize $V$ by giving it the finite topology: a subset $U$ of $V$ is open if and only if its intersection with any finite dimensional subspace of $V$ is open in the Euclidean topology. This is equivalent to the condition that $U \cap V_Y$ is open in $V_Y$, equipped with its Euclidean topology, for all finite subsets $Y$ of $\bbZ\times \bbZ$. 

Observe that the cone spanned by vectors $v^1,\dots,v^n$ in the positive orthant of $V$ is contained in $V_Y$ if and only if each $v^i$ is in $V_Y$. This has the following consequence.

\begin{lemma}
\label{le:local-finiteness}
With $Y$ as above, for any codimension sequence $\bsc$ the cone $\bcone^{\bsc}_{d}\cap V_Y$ is spanned by vectors $\beta(\bst)$ where $\bst$ is any degree sequence compatible with $\bsc$ and satisfying the condition
\[
(n,t_n) \in Y \qquad\text{for $\inf(\bst) \le n \le \inf(\bst)+\codim(\bst)$.}
\]
Thus $\bcone^{\bsc}_{d}\cap V_Y$ is the cone spanned by a finite collection of vectors, and hence the subset $\bcone^{\bsc}_{d}\subset V$ is  closed in the finite topology. \qed
\end{lemma}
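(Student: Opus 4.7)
The plan is to reduce both assertions to a finiteness statement about degree sequences, and then invoke a standard fact about finitely generated cones in Euclidean space. The first step is to observe, directly from formula \eqref{eq:HK}, that $\beta(\bst)$ has nonnegative entries, supported on the finite set $\{(n,t_n) : \inf(\bst) \le n \le \inf(\bst)+\codim(\bst)\}$. Combining this with the positivity observation recorded immediately before the lemma, a strictly positive rational combination $\sum_i a_i \beta(\bst^i)$ lies in $V_Y$ if and only if every $\beta(\bst^i)$ individually lies in $V_Y$, which is in turn equivalent to the support condition $(n,t^i_n)\in Y$ for all $n$ in the relevant range for $\bst^i$. Since any element of $\bcone^{\bsc}_d\cap V_Y$ is such a positive combination of generators, this identification gives the first half of the lemma.

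Next I would verify that only finitely many degree sequences $\bst$ satisfy the support condition for a fixed finite $Y$. Each pair $(n,t_n)$ is forced to lie in $Y$, which simultaneously bounds the position $n$ and the value $t_n$; hence $\inf(\bst)$ and $\codim(\bst)$ range over finite sets, and once those are fixed each entry $t_n$ has only finitely many possibilities. Consequently $\bcone^{\bsc}_d\cap V_Y$ is the $\bbQ_{\geqslant 0}$-span of a finite collection of vectors in the finite-dimensional space $V_Y$, giving the second assertion.

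For the closedness claim I would unwind the definition of the finite topology: a subset of $V$ is closed if and only if its intersection with $V_Y$ is closed in the Euclidean topology on $V_Y$ for every finite $Y$. By the previous paragraph, each such intersection is a finitely generated rational cone inside a finite-dimensional real vector space, and every such cone is closed in the Euclidean topology (this is the Minkowski--Weyl theorem, or equivalently an elementary consequence of Carath\'eodory's theorem for cones). I do not anticipate a genuine obstacle; the only step requiring care is the opening positivity observation, which prevents cancellation among generators and is what licenses the passage from ``the positive combination lies in $V_Y$'' to ``each summand lies in $V_Y$.''
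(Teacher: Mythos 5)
Your proposal is correct and follows essentially the same route the paper intends: the no-cancellation observation for nonnegative vectors recorded just before the lemma, the resulting support condition $(n,t_n)\in Y$ which leaves only finitely many admissible degree sequences, and the standard closedness of finitely generated rational cones in each $V_Y$. The paper leaves exactly these details to the reader (the lemma is stated with a \qed), so your write-up simply supplies the intended argument.
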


The result above means that if a sequence of elements $(\beta_n)_{n\geqslant 0}$ in $\bcone^{\bsc}_{d}$ converges in the finite topology to an element $\beta$ of $V$, then $\beta$ must also belong to $\bcone^{\bsc}_{d}$.

\subsection*{Graded modules over graded rings}
As in Section~\ref{se:limU}, let $k$ be a field and $R=\{R_i\}_{i\geqslant 0}$ a finitely generated graded $k$-algebra with $R_0=k$. Set $\fm\colonequals R_{i\geqslant 1}$.

\begin{chunk}
Let $M$ be an $R$-complex in $\dbbcat(\grmod R)$. For any pair of integers $(i,j)$, the \emph{Betti number} of $M$ in degree $(i,j)$ is 
\[
\beta^R_{i,j}(M) \colonequals \rank_k \Tor^R_i(k,M)_j\,.
\]
The \emph{Betti table} of $M$ is the array  $\beta^R(M)\colonequals (\beta^R_{i,j}(M))_{i,j}$. 
\end{chunk}

\subsection*{Finite free complexes}
With $R$ as above,  we write  $\perf(R)$ for the class of finite free complexes of graded $R$-modules, that is to say, complexes of the form
\[
0\lra F_b \lra \cdots \lra F_a \lra 0
\]
where each $F_i$ is a finitely generated graded free $R$-module, and the differential is homogenous of degree zero.  
Each finite free complex is quasi-isomorphic to a \emph{minimal} one: a finite free complex $F$  whose differential $d$ satisfies $d(F)\subseteq \fm F$. When $F$ is minimal, one has 
\[
F_i \cong \bigoplus_j R(-j)^{\beta^R_{i,j}(F)}\,.
\]
Since $F$ is finite free, $\beta^R_{i,j}(F)$ is nonzero only for finitely many pairs $(i,j)$, so $\beta^R(F)$, the Betti table of $F$, 
is an element in the $\bbQ$-vector space $V$ from \eqref{eq:Vvec}. 

\subsection*{Codimension}
Let $\codim_RM$ be the \emph{codimension} of a finitely generated graded $R$-module $M$, namely, the height of its annihilator ideal. Thus $\codim_RM=\infty$ if and only if $M=0$.  Let $\bsc$ be a codimension sequence, as defined above.  Set
\begin{equation}
\label{eq:perfR}
\perf^{\bsc}(R)\colonequals \left\{F\in \perf(R)\left| 
	\begin{aligned}
	\text{ $\beta^R_{i,j}(F)=0$ when $c_i=\varnothing$, and}\\
	\text{$\codim \HH iF\ge c_i$ when $c_i>\varnothing$}
	\end{aligned}\right.\right\}\,.
\end{equation}
For instance, for the constant sequence $\dim R$ the objects in $\perf^{\dim R}(R)$ are precisely the finite free complexes with homology of finite length. Moreover, $\perf^{0}(R)=\perf(R)$. Clearly, if $\bsc'$ is another codimension sequence with $\bsc\le \bsc'$, then $\perf^{\bsc}(R)\supseteq \perf^{\bsc'}(R)$.

Consider the cone in the $\bbQ$-vector space $V$ from \eqref{eq:Vvec}, spanned by the Betti tables of the finite free complexes $F$ in $\perf^{\bsc}(R)$:
\begin{equation}
\label{eq:betti-cone}
\bcone^{\bsc}(R)\colonequals \sum_{F\in \perf^{\bsc}(R)} \bbQ_{\geqslant 0} \beta^R(F)\,.
\end{equation}

\subsection*{Pure complexes}
A finite free complex $F$ is \emph{pure} if there exists a degree sequence $\bst$ as  above such that for $a\colonequals \inf(\bst)$ and $l\colonequals \codim \bst$, the following conditions hold:
\begin{enumerate}[\quad\rm(1)]
\item
$\beta_{i,j}(F)\ne 0$ if and only if $j=\bst_{i}$, that is to say,  the complex $F$ is quasi-isomorphic to minimal complex of the form
\[
0\lra R(-t_{a+l})^{b_{a+l}} \lra \cdots \lra  R(-t_{a})^{b_{a}} \lra 0\,;
\]
\item
The $R$-module $\HH aF$ is Cohen-Macaulay with $\codim \HH aF\ge l$;
\item
$\HH nF =0$ for $n \ne a$.
\end{enumerate}
In particular, $\shift^{-a}F$ is the free resolution of a finitely generated $R$-module, namely, $\HH aF$.  It follows from the result of Herzog and K\"uhl~\cite{Herzog/Kuhl:1984} that the Betti table of such an $F$ has the form
\[
\beta^R(F) = b \cdot \beta(\bst)\,,
\]
where $b\colonequals \beta_{a}^R(F)$ and $\beta(\bst)$ is as in \eqref{eq:HK}.

\subsection*{Boij-Soderberg theory}
Let $S\colonequals k[x_1,\dots,x_d]$ be a standard graded polynomial ring; thus the degree of each $x_i$ is $1$.  The following result is due to Eisenbud and Erman, and builds on work of Eisenbud and Schreyer, and Boij and S\"oderberg. The cone $\bcone^{\bsc}_{d}$ is defined in  \eqref{eq:codim-cone}.

\begin{theorem}{\cite[Theorem~3.1]{Eisenbud/Erman:2017}}
\label{th:EE}
Each degree sequence has an associated pure object in $\perf(S)$.  For any codimension sequence $\bsc$ for $S$ one has $\bcone^{\bsc}(S)=\bcone^{\bsc}_{d}$.\qed
\end{theorem}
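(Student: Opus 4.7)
The plan is to prove the two containments $\bcone^{\bsc}_d \subseteq \bcone^{\bsc}(S)$ and $\bcone^{\bsc}(S) \subseteq \bcone^{\bsc}_d$ separately, following the two-step strategy of Boij-S\"oderberg, Eisenbud-Schreyer, and Eisenbud-Erman: first produce explicit pure complexes realizing each degree sequence, then decompose an arbitrary Betti table as a positive rational combination of pure ones.

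For $\bcone^{\bsc}_d \subseteq \bcone^{\bsc}(S)$, I would construct, for each degree sequence $\bst=(t_a,\dots,t_{a+l})$ compatible with $\bsc$, a pure $F\in \perf^{\bsc}(S)$ with $\beta^S(F)\in \bbQ_{>0}\cdot \beta(\bst)$. The classical Eisenbud-Fl\o ystad-Weyman construction (via Schur functors, or equivariantly via pushforwards of Koszul-type complexes from a flag variety) produces a Cohen-Macaulay $S$-module $M$ of codimension exactly $l$ whose minimal free resolution is pure with degree sequence $(t_a,\dots,t_{a+l})$. Placing this resolution so that its unique nonzero homology sits in homological degree $a$ gives an object of $\perf^{\bsc}(S)$: the inequality $c_a\le l$ ensures $\codim H_a(F)\ge c_a$, and the inequality $l\le c_{a+1}$ together with purity ensures $\beta^S_{i,j}(F)=0$ whenever $c_i=\varnothing$. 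The Herzog-K\"uhl formulas then force $\beta^S(F)$ to be a positive rational multiple of $\beta(\bst)$.

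For the reverse containment, I would run the greedy Boij-S\"oderberg decomposition algorithm. Partially order degree sequences componentwise and, given a nonzero $F\in \perf^{\bsc}(S)$, extract the ``top strand'' degree sequence $\bst^{\mathrm{top}}$ defined by letting $t_i$ be the smallest integer $j$ with $\beta^S_{i,j}(F)\ne 0$ (taking $t_i=\pm\infty$ for homological indices outside the support); I would verify that $\bst^{\mathrm{top}}$ is a strictly increasing finite subsequence and is compatible with $\bsc$. Then subtract the largest $\lambda\in\bbQ_{\ge 0}$ so that $\beta^S(F)-\lambda\,\beta(\bst^{\mathrm{top}})$ remains entrywise non-negative. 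Since this kills at least one entry along the top strand, iterating terminates and expresses $\beta^S(F)$ as a non-negative rational combination of $\beta(\bst)$'s with $\bst$ compatible with $\bsc$.

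The main obstacle is ensuring that at each step the residual array is still the Betti table of something in $\perf^{\bsc}(S)$, or at least decomposable as a positive combination of $\beta(\bst')$ with $\bst'$ compatible with $\bsc$. This is the facet problem, and it is the heart of Eisenbud-Schreyer's contribution: for each suitable coherent sheaf $\mcE$ on $\bbP^{d-1}$ one defines a bilinear pairing $\langle \beta^S(F),\gamma(\mcE)\rangle$ (an alternating sum over a ``rook strip'' of the two tables) and shows it is non-negative for $F\in\perf(S)$; these pairings cut out precisely the facets of $\bcone^{\bsc}_d$ and are what forces the greedy algorithm to stay inside the cone. Proving the positivity of the pairing is the delicate derived-category computation, identifying it up to controlled signs with an Euler characteristic of hypercohomology of $\widetilde{F}\lotimes \mcE$; once this is in hand the two containments close up and give $\bcone^{\bsc}(S)=\bcone^{\bsc}_d$.
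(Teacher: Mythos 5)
First, a point of comparison: the paper does not prove Theorem~\ref{th:EE} at all. It is imported verbatim from Eisenbud--Erman (hence the qed in the statement) and used as a black box in the sequel, so your proposal is an attempt to reprove the cited theorem rather than an alternative to an argument appearing in the paper.

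As such an attempt, it has a genuine gap at the decomposition step. The greedy ``top strand'' algorithm you describe is the Boij--S\"oderberg/Eisenbud--Schreyer argument for minimal free resolutions of \emph{modules}, and it does not transfer to $\perf^{\bsc}(S)$ as stated. For a minimal free complex, $t_i\colonequals \min\{j\mid \beta^S_{i,j}(F)\ne 0\}$ need not be strictly increasing (take $F=S\oplus\shift S$ with zero differential), need not be supported on a consecutive interval (direct sums with disjoint homological supports), and, most importantly, need not be compatible with $\bsc$: for $d\ge 1$ the Koszul complex on $x_1,\dots,x_d$ lies in $\perf^{\bsc}(S)$ for the constant codimension sequence $\bsc=0$, yet its top strand is the full degree sequence $(0,1,\dots,d)$, whose codimension $d$ violates the compatibility constraint $\codim(\bst)\le c_{a+1}=0$; your algorithm would terminate after one step with exactly this incompatible pure table. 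So the loop produces pure tables, but not $\bsc$-compatible ones, and the substance of the theorem is precisely that the decomposition can be arranged to respect the upper bounds $c_{i+1}$ as well as the lower bounds $c_i$, while correctly handling overlapping strands coming from different homology modules. That is where Eisenbud--Erman go beyond the module case, via their duality/pairing machinery for free complexes; the module-level facet positivity of Eisenbud--Schreyer that you invoke does not by itself supply it, so citing it at this point comes close to assuming what is to be proved. (The first half of your sketch --- existence of pure resolutions via the Eisenbud--Fl{\o}ystad--Weyman or equivariant pushforward constructions, shifted to homological degree $a$, giving $\bcone^{\bsc}_{d}\subseteq\bcone^{\bsc}(S)$ --- is fine, though the inequality $l\le c_{a+1}$ plays no role there: the vanishing $\beta^S_{i,j}(F)=0$ for $c_i=\varnothing$ follows from $c_a\ge 0$ and the monotonicity of $\bsc$.)
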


By a codimension sequence $\bsc$ \emph{for} a graded ring $R$ we mean a codimension sequence for the integer $\dim R$. Combining the preceding theorem with Lemma~\ref{le:local-finiteness} gives the following result.

\begin{corollary}
\label{co:bst}
For a standard graded polynomial ring $S$, and any codimension sequence $\bsc$ for $S$, the subset $\bcone^{\bsc}(S)\subset V$ is closed in  the finite topology. \qed
\end{corollary}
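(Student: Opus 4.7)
The statement is essentially a direct consequence of the two results cited immediately before it, so the plan is to combine them. By Theorem~\ref{th:EE} we have the equality $\bcone^{\bsc}(S) = \bcone^{\bsc}_{d}$, so it suffices to prove that $\bcone^{\bsc}_{d} \subseteq V$ is closed in the finite topology.

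By definition of the finite topology on $V$, a subset is closed precisely when its intersection with $V_Y$ is closed in the Euclidean topology on $V_Y$ for every finite $Y \subseteq \bbZ \times \bbZ$. The plan is therefore to fix such a $Y$ and apply Lemma~\ref{le:local-finiteness}, which tells us that $\bcone^{\bsc}_{d} \cap V_Y$ is the cone spanned by the finitely many vectors $\beta(\bst)$ as $\bst$ ranges over degree sequences compatible with $\bsc$ satisfying $(n,t_n)\in Y$ for $\inf(\bst)\le n\le \inf(\bst)+\codim(\bst)$.

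It then remains to invoke the standard fact from convex geometry that a finitely generated convex cone in a finite-dimensional Euclidean vector space is closed (this is part of the Minkowski--Weyl theorem, and can also be seen directly: any point in the closure of such a cone can be written, via Carath\'eodory's theorem, as a limit of nonnegative combinations of a fixed linearly independent subset of the generators, and taking the limit of the coefficients shows the limit point lies in the cone).

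There is no serious obstacle here; the only small point to be careful about is that the generators of $\bcone^{\bsc}_{d} \cap V_Y$ given by Lemma~\ref{le:local-finiteness} are not the generators of all of $\bcone^{\bsc}_{d}$ intersected with $V_Y$, but rather a genuinely finite subset of them, so the finite-dimensional closedness statement applies. Given that the lemma already does the bookkeeping, the proof reduces to citing these two results in sequence.
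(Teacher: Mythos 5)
Your proposal is correct and follows exactly the paper's route: the paper likewise deduces the corollary by combining Theorem~\ref{th:EE} (giving $\bcone^{\bsc}(S)=\bcone^{\bsc}_{d}$) with Lemma~\ref{le:local-finiteness}, which already records that $\bcone^{\bsc}_{d}\cap V_Y$ is a finitely generated cone and hence that $\bcone^{\bsc}_{d}$ is closed in the finite topology. Your added justification that a finitely generated cone in a finite-dimensional space is closed simply makes explicit a step the paper absorbs into the lemma.
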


\section{Cones of Betti tables}

\begin{chunk}
\label{ch:notation}
Throughout this section $k$ is an infinite perfect field (possibly of characteristic zero) and $R$ a finitely generated graded $k$-algebra admitting a linear Noether normalization, say $A$.
\end{chunk}

\begin{theorem}
\label{th:main-bs}
Let $R$ be as in \ref{ch:notation}. If $\bsc$ is a codimension sequence for $R$, then  $\bcone^{\bsc}(R)\subseteq \bcone^{\bsc}_{d}$. Equality holds if $c_i\ne\infty$ for all $i$, or $R$ is Cohen-Macaulay.
\end{theorem}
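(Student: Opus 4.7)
The plan is to use the lim Ulrich sequence over $R$ from Theorem~\ref{th:limU-graded-modules} to transfer Betti table data between $R$ and its polynomial subring $A$, then combine this with the Eisenbud-Erman theorem (Theorem~\ref{th:EE}) for $A$ and the closedness of $\bcone^\bsc_d$ in the finite topology (Lemma~\ref{le:local-finiteness}).

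For the containment $\bcone^\bsc(R) \subseteq \bcone^\bsc_d$, I will first handle the case of positive characteristic and then reduce characteristic zero to it by a standard spreading-out argument over a finitely generated $\bbZ$-algebra and reducing modulo a sufficiently large prime. In positive characteristic, fix $F \in \perf^\bsc(R)$ and pick a lim Ulrich sequence $(U^n)_{n \geqslant 1}$ as in Theorem~\ref{th:limU-graded-modules}. Viewing $F \otimes_R U^n$ as a bounded complex of graded $A$-modules, the regularity of $A$ yields a finite free $A$-resolution $G^n$. Applying Lemma~\ref{le:perfect} with $P = F$, and identifying $F \otimes_R \kos{\bsx}{U^n}$ with $(F \otimes_R U^n)\lotimes_A k$ (since $\kos{\bsx}{-}$ computes $-\lotimes_A k$), will give
\[
\lim_{n \to \infty} \frac{\beta^A_{i,j}(G^n)}{\nu_R(U^n)} = \beta^R_{i,j}(F) \quad \text{for all } i,j.
\]
I will then argue that $G^n$ belongs to $\perf^\bsc(A)$: the support condition is inherited from $F$ (whose homological support lies in positions $i$ with $c_i \ne \varnothing$), and the codim condition on $\HH i{G^n} = \HH i{F \otimes_R U^n}$ follows by combining the spectral sequence $\Tor^R_p(\HH qF, U^n) \Rightarrow \HH{p+q}{F \otimes_R U^n}$ with the equality $\codim_A = \codim_R$ for $R$-modules (a consequence of module-finiteness of $A \hookrightarrow R$). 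Theorem~\ref{th:EE} will then place each $\beta^A(G^n)$ in $\bcone^\bsc(A) = \bcone^\bsc_d$, and Lemma~\ref{le:local-finiteness} will pass this membership to the limit $\beta^R(F)$.

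For the reverse inclusion when $R$ is Cohen-Macaulay, I will start with a degree sequence $\bst$ compatible with $\bsc$, apply Theorem~\ref{th:EE} to obtain a pure $G \in \perf^\bsc(A)$ with $\beta^A(G)$ proportional to $\beta(\bst)$, and form the base change $F \colonequals G \otimes_A R$. Since $R$ is free over $A$ under the Cohen-Macaulay hypothesis, $F$ will be a finite free $R$-complex with $\HH iF \cong \HH iG \otimes_A R$, so $\beta^R(F) = \beta^A(G)$ (using $(G \otimes_A R)\lotimes_R k = G \lotimes_A k$) and $\codim_R \HH iF = \codim_A \HH iG \ge c_i$, placing $F$ in $\perf^\bsc(R)$. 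For the case $c_i \ne \infty$ for all $i$ without the Cohen-Macaulay hypothesis, I will replace the naive base change by an asymptotic construction using the lim Ulrich sequence, parallel to the containment direction; the absence of $\infty$ in $\bsc$ ensures that the extra homology from $\Tor^A_p(\HH qG, R)$ introduced by non-flatness does not violate any exactness constraint of $\perf^\bsc(R)$.

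The hardest step will be verifying $G^n \in \perf^\bsc(A)$ in the containment: for a general (non-pure) $F$ the spectral sequence only yields $\codim \HH i{G^n} \ge \min_{q \le i,\, c_q \ne \varnothing} c_q$, which can be strictly smaller than $c_i$ when $\bsc$ is not constant. Resolving this will require either suitable truncation of $G^n$ killing offending homology, or a refined analysis showing that the limit of the normalized Betti tables automatically lies inside the smaller cone $\bcone^\bsc_d$ (rather than a coarser $\bcone^{\bsc'}_d$) via the Boij-Söderberg decomposition of $\beta^R(F)$.
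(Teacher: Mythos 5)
Your outline of the limit mechanism (Lemma~\ref{le:perfect} applied to $F\otimes_R\kos{\bsx}{U^n}$, then closedness of the target cone) matches the paper, but the step you yourself flag as hardest is a genuine gap, and it is exactly where the paper's argument has real content. A support/spectral-sequence argument cannot give $\codim_A\HH i{F\otimes_RU^n}\ge c_i$ for non-constant $\bsc$, and neither of your proposed remedies works as stated: truncating the $A$-resolution changes the Betti table you are trying to approximate, and the ``refined analysis'' of the limit is precisely what must be proved. The paper instead proves Lemma~\ref{le:AB-lemma}: localize at any prime $\fq$ with $\height\fq<c_i$, note $\sup\HH*{F_\fq}<i$ since $F\in\perf^{\bsc}(R)$, and use the Auslander--Buchsbaum formula for complexes \eqref{eq:AB-formula} together with the depth hypotheses $\depth_{R_\fp}(U^n)_\fp\ge\depth R_\fp$ built into Theorem~\ref{th:limU-graded-modules}(1) to conclude $\sup\HH*{(F\otimes_RU^n)_\fq}\le\sup\HH*{F_\fq}<i$. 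This is a depth statement about the lim Ulrich modules, not a support statement, and your proposal never invokes those depth properties. Two further points in the containment are missing: (i) when $c_i=\infty$ membership in $\perf^{\bsc}(A)$ forces $\HH i{F\otimes_RU^n}=0$, which generally fails; the paper first reduces, via Lemma~\ref{le:trick} (a projective-dimension argument on the pure pieces of a Boij--S\"oderberg decomposition), to sequences with all $c_i\le\dim R$; (ii) pointwise convergence of the normalized Betti tables does not by itself allow you to invoke closedness in the finite topology -- you need the uniform vanishing of $\beta^A_{i,j}(F\otimes_RU^n)$ outside a single finite set of $(i,j)$, which the paper gets from the uniform bounds $t_0,t_1$ on generation degrees and regularity in Theorem~\ref{th:limU-graded-modules}(2) plus a change-of-rings spectral sequence.

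For the reverse inclusion your Cohen--Macaulay argument (freeness of $R$ over $A$, base change of a pure $A$-resolution) is fine and essentially the paper's. But your plan for the case where all $c_i\ne\infty$ and $R$ is not Cohen--Macaulay does not work: an ``asymptotic construction using the lim Ulrich sequence'' can only place $\beta(\bst)$ in a closure, while $\bcone^{\bsc}(R)$ is a cone of actual Betti tables and is not known to be closed; moreover the lim Ulrich modules transfer information from $R$ to $A$, not in the direction you need here. The issue is also not ``exactness constraints'': for a naive base change $R\otimes_AF$ of a resolution $F$ of an $A$-module $M$, the codimension of $\HH 0(R\otimes_AF)=R\otimes_AM$ can genuinely drop below $\codim_AM$ (Example~\ref{ex:codim-failure}). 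The paper handles this by a Miller--Speyer genericity argument making $\Tor^A_{\ge1}(R,M)$ of finite length (so those homologies have codimension $\dim R\ge c_i$, using $c_i\ne\infty$) and then proving Lemma~\ref{le:codim} to show that under this Tor condition $\codim_R(R\otimes_AM)=\codim_AM$. Without these ingredients this half of the statement is unproved in your proposal.
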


The proof of the theorem when $k$ has positive characteristic is given in \ref{ch:main-bs-p}. The characteristic zero case is handled by reduction to positive characteristic; see \ref{ch:main-bs-0}. We  record  a couple of expected consequences. 

When $R$ is a standard graded polynomial ring, the bounds given below on the multiplicities (whose definition is recalled in \ref{ch:linear-sop}) were conjectured by Huneke and Srinivasan, and proved by Eisenbud and Schreyer~\cite{Eisenbud/Schreyer:2009} as a consequence of their proof of the conjectures of Boij and S\"oderberg~\cite{Boij/Soderberg:2008}. 

\begin{corollary}
\label{co:main-bs}
Let $R$ be as in \ref{ch:notation}, and let  $M$ be a nonzero finitely generated, graded, $R$-module generated in degree zero, and of finite projective dimension. Then 
\[
 e(M) \le \frac{e(R)\beta_0(M)}{c!} \prod_{i=1}^c\max\{j\mid \beta^R_{i,j}(M)\ne 0\}\,,
\]
for $c\colonequals \codim M$. When $M$ is perfect, one has also the lower bound:
\[
  e(M) \ge \frac{e(R)\beta_0(M)}{c!} \prod_{i= 1}^c\min\{j\mid \beta^R_{i,j}(M)\ne 0\}\,.
\]
Each of the inequalities above is strict, unless $M$ has a pure resolution.
\end{corollary}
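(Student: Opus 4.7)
The plan is to follow the classical Huneke-Srinivasan template: decompose $\beta^R(M)$ into pure Betti tables via Theorem~\ref{th:main-bs}, recognize the multiplicity $e(M)$ as a specific linear functional applied to $\beta^R(M)$, evaluate that functional on pure tables via the Herzog-Kühl relations, and then read off the bounds from the support of $\beta^R(M)$. Concretely, I would apply Theorem~\ref{th:main-bs} to the codimension sequence $\bsc$ given by $c_i = \varnothing$ for $i < 0$, $c_0 = c$, and $c_i = \infty$ for $i \geq 1$ (for the perfect case one can instead take $c_i = c$ for every $0 \le i \le c$, using $\pdim M = c$). This yields a finite positive rational decomposition
\[
\beta^R(M) = \sum_{s=1}^N a_s \beta(\bst^{(s)}), \qquad a_s > 0,
\]
with each $\bst^{(s)}$ compatible with $\bsc$, so in particular $\inf(\bst^{(s)}) = 0$ and $l^{(s)} \colonequals \codim(\bst^{(s)}) \geq c$. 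Since $M$ is generated in degree $0$, the vanishing of $\beta^R_{0,j}(M)$ for $j \neq 0$, combined with non-negativity of the $\beta(\bst^{(s)})$, forces $t^{(s)}_0 = 0$ for every $s$. When $M$ is perfect, the additional vanishing $\beta^R_{i,j}(M) = 0$ for $i > c$ further forces $l^{(s)} = c$ for every $s$.

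Next, I would realize the multiplicity as a linear functional. The Hilbert series identity $H_M(t) = H_R(t) K_M(t)$, where $K_M(t) \colonequals \sum_{i,j}(-1)^i \beta^R_{i,j}(M) t^j$, combined with $H_R(t) = Q_R(t)/(1-t)^d$ satisfying $Q_R(1) = e(R)$ (which holds because $R$ admits a linear Noether normalization), shows that $K_M$ vanishes to order $c$ at $t = 1$ and
\[
e(M) = e(R) \cdot \mu_c(\beta^R(M)), \qquad \mu_c(\beta) \colonequals \frac{(-1)^c}{c!} \sum_{i,j}(-1)^i \beta_{i,j}\, j(j-1)\cdots(j-c+1).
\]
A Vandermonde-type computation, using the Herzog-Kühl relations $\sum_i (-1)^i b_i t_i^r = 0$ for $0 \leq r < l$, then evaluates $\mu_c$ on pure tables:
\[
\mu_c(\beta(\bst)) =
\begin{cases}
\prod_{i=1}^c (t_i - t_0)/c! & \text{if } \codim(\bst) = c, \\
0 & \text{if } \codim(\bst) > c.
\end{cases}
\]

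Substituting into the decomposition and using $t^{(s)}_0 = 0$ gives
\[
e(M) = \frac{e(R)}{c!} \sum_{s :\, l^{(s)} = c} a_s \prod_{i=1}^c t^{(s)}_i.
\]
Positivity of the entries of $\beta(\bst^{(s)})$ at $(i, t^{(s)}_i)$, coupled with the support constraints on $\beta^R(M)$, forces $t_i \leq t^{(s)}_i \leq T_i$ for each $s$ with $a_s > 0$; meanwhile comparison at position $(0,0)$ gives $\sum_s a_s = \beta^R_{0,0}(M) = \beta_0(M)$. The upper bound then follows from $t^{(s)}_i \leq T_i$ together with $\sum_{l^{(s)}=c} a_s \leq \sum_s a_s = \beta_0(M)$; in the perfect case all $l^{(s)} = c$ so this last inequality becomes equality, and the lower bound follows from $t^{(s)}_i \geq t_i$. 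Strict inequality unless $M$ has a pure resolution is automatic: equality in either bound forces the decomposition to consist of a single $\bst^{(s)}$ whose shifts attain the extremal values, so $\beta^R(M)$ is a scalar multiple of a single pure Betti table. The main obstacle I anticipate is the precise evaluation of $\mu_c$ on pure tables and verification of the Hilbert series identity $e(M) = e(R)\mu_c(\beta^R(M))$ in the generality allowed by the statement, which crucially uses the linear Noether normalization to guarantee $H_R(t) = Q_R(t)/(1-t)^d$ with $Q_R(1) = e(R)$.
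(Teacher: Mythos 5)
Your proposal is correct and follows essentially the same route as the paper: decompose $\beta^R(M)$ via Theorem~\ref{th:main-bs} for the codimension sequence $(\dots,\varnothing,c,\infty,\dots)$, express $e(M)$ as $e(R)$ times a linear functional on Betti tables using $\hilb_M(t)=\hilb_R(t)g_\beta(t)$ and the linear Noether normalization (Lemma~\ref{le:multiplicity}), evaluate on pure tables via Herzog--K\"uhl, and compare supports. The only difference is cosmetic: the paper stops after establishing the functional identity and cites \cite[Theorem~4.6]{Boij/Soderberg:2008} for the remaining combinatorial step, which you carry out explicitly (correctly, including the strictness analysis).
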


\begin{proof}
We deduce these bounds from Theorem~\ref{th:main-bs} following the argument in \cite{Boij/Soderberg:2008}. 
 
There is nothing to check if $\dim R=0$ so we can assume $d\colonequals \dim R\ge 1$. Let $A$ be a Noether normalization of $R$ as in \ref{ch:notation}. 
It follows from Lemma~\ref{le:multiplicity} that the multiplicity of a graded $R$-module coincides with its multiplicity as an $A$-module.

Fix an integer $c$ with $0 \leq c \leq d$. We  look at Betti tables of $R$-modules of finite projective dimension and codimension at least $c$. So let $\bsc \colonequals (\dots, \varnothing, c, \infty, \dots)$ with $c$ in position $0$ and $\bcone^{\bsc}_{d}$ the corresponding rational cone; see \eqref{eq:codim-cone}. It is thus the positive rational cone spanned by $\beta(\bst)$ where $\bst$ is any degree sequence of the form
\[
(\dots-\infty, t_0,\dots, t_l, \infty,\dots)
\]
with $c \leq l \leq \dim R$. We know that  $\bcone^{\bsc}(R)\subseteq \bcone^{\bsc}_{d}$, by Theorem~\ref{th:main-bs}.

Let $\hilb_R(t)$ be the Hilbert series of $R$, and for $\beta \in \bcone^{\bsc}_{d}$, set
\[
\hilb_\beta(t) \colonequals \hilb_R(t) g_\beta(t),
\quad
\text{where}
\quad
g_\beta(t) \colonequals \sum_{i,j} (-1)^i \beta_{i,j} t^j \in \bbZ[t, t^{-1}]\,.
\]
Since $R$ admits a linear system of parameters, $\hilb_R(t) = f_R(t)/(1-t)^{d}$ for some polynomial $f_R(t)$ and $e(R) = f_R(1)$. If $M\in\grmod R$ has finite projective dimension, then $\hilb_\beta(t) = \hilb_M(t)$, where $\beta=\beta^R(M)$; see \cite[Lemma~4.1.13]{Bruns/Herzog:1998}.

For $\beta \colonequals \beta(\bst)$ with $\bst$ a degree sequence as above we have that $g_\beta(t)$ is divisible by $(1-t)^{l}$, with $l=\codim(\bst)$, and no higher power. In particular, we have that $g_\beta(t)$ is divisible by $(1-t)^c$ for all $\beta \in \bcone^{\bsc}_{d}$. It thus makes sense to define
\[
e(\beta) \colonequals e(R) \left(\frac{g_\beta(t)}{(1-t)^c}\right)_{t=1} \quad\text{for $\beta \in \bcone^{\bsc}_{d}$.}
\]
The function $e(-)$ depends on the chosen $c$ and $e(\beta(\bst)) = 0$ if $\codim(\bst) \ge c+1$.

Recall that one has 
\[
\codim_RM = \dim R - \dim_R M=d-\dim_R M
\]
for any $M\in\grmod R$ of finite projective dimension; see \cite[Th\'eor\`eme 2]{Peskine/Szpiro:1974}.

\begin{claim} 
If $M\in\grmod R$ has finite projective dimension, and codimension at least $c$, then $e(\beta^R(M)) = e_{d-c}(M)$; see \ref{ch:linear-sop}.
\end{claim}

Indeed, since $M$ has dimension at most $d-c$,  one has that
\[
\hilb_M(t) = \frac{f_M(t)}{(1-t)^{d-c}}
\]
for some polynomial $f_M(t)$ and then  $e_{d-c}(M) = f_M(1)$. On the other hand, for $\beta=\beta^R(M)$,  the discussion above yields
\begin{align*}
\hilb_M(t) 
	&= \hilb_\beta(t) \\
	& = \hilb_R(t) g_\beta(t) \\
	& = \frac{f_R(t)}{(1-t)^{d}} g_\beta(t)\\
	& = \frac{(f_R(t) g_\beta(t)/(1-t)^c}{(1-t)^{d-c}}\,.
\end{align*}
Thus $f_M(t) = f_R(t) g_\beta(t)/(1-t)^c$; evaluating this equality at $t = 1$ yields the claim.

Once we know that the multiplicity of a module can be extracted from its Hilbert series, the rest of the argument is as in \cite[Theorem 4.6]{Boij/Soderberg:2008}. 
\end{proof}

We write $\bcone^{\mathrm{short}}(R)$ for the positive cone in $V$, defined in \eqref{eq:Vvec}, spanned by $\beta^R(F)$ as $F$ ranges over the short complexes over $R$; see \ref{ch:short-complexes}.

\begin{corollary}
With $R$ and $A$ as in \ref{ch:notation}, there is an equality $\bcone^{\mathrm{short}}(R) = \bcone^{\mathrm{short}}(A)$.
\end{corollary}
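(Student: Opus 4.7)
The plan is to prove the two containments separately: the easy direction $\bcone^{\mathrm{short}}(A)\subseteq\bcone^{\mathrm{short}}(R)$ via base change along $A\hookrightarrow R$, and the reverse containment via Theorem~\ref{th:main-bs} combined with a positivity/support argument on pure diagrams.

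For base change, given a minimal short complex $F$ over $A$, I would form $F\otimes_A R$, a bounded complex of finitely generated free graded $R$-modules of length $d$. Because $\fm_A R\subseteq\fm_R$, the tensor product remains minimal, so $\beta^R(F\otimes_A R)=\beta^A(F)$. What remains is to check that $H_\ast(F\otimes_AR)$ is finite-dimensional over $k$ and nonzero. Nonzeroness is immediate because $F\otimes_A R$ is minimal and nonzero. For finite length, any graded prime $\fq\ne\fm_R$ of $R$ contracts to a graded prime $\fp\ne\fm_A$ of $A$---otherwise $\fq$ would contain $\fm_A R$ and, since $R/\fm_AR$ is graded Artinian with unique graded prime, would coincide with $\fm_R$. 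Then $F_{\fp}$ is a bounded acyclic complex of free modules over the regular graded-local ring $A_{\fp}$, hence split exact, and that splitting persists under $-\otimes_{A_{\fp}} R_{\fq}$.

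For the reverse containment, let $F$ be a short complex over $R$ and take the codimension sequence $\bsc$ with $c_i=\varnothing$ for $i<0$, $c_i=d$ for $0\le i\le d$, and $c_i=\infty$ for $i>d$. This is non-decreasing and $F\in\perf^{\bsc}(R)$, so Theorem~\ref{th:main-bs} yields a decomposition $\beta^R(F)=\sum_i q_i\,\beta(\bst_i)$ with $q_i>0$ and each $\bst_i$ compatible with $\bsc$. Compatibility forces $\codim(\bst_i)=d$ and $\inf(\bst_i)\in\{0,1,\dots,d\}$, so the homological support of $\beta(\bst_i)$ is $\{\inf(\bst_i),\dots,\inf(\bst_i)+d\}$. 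Since $\beta^R(F)$ is supported in $[0,d]$ and the $\beta(\bst_i)$ have non-negative entries, no cancellation is possible, and this forces $\inf(\bst_i)=0$ for every $i$ with $q_i>0$. Each pure diagram $\beta(\bst_i)$ with $\inf=0$ and $\codim=d$ lies in $\bcone^{\mathrm{short}}(A)$: by Theorem~\ref{th:EE} applied to the codimension sequence $\bsc'$ with $c_0=d$, $c_i=\infty$ for $i\ge 1$, $c_i=\varnothing$ for $i<0$, such pure diagrams are rational combinations of Betti tables of free resolutions of finite-length $A$-modules, which have length $d$ by Auslander-Buchsbaum and are therefore short.

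The main obstacle will be the final positivity/support step. The codimension-sequence framework does not directly characterize ``short'': the vanishing $\beta^R_{i,j}(F)=0$ for $i>d$ is a property of the minimal model of $F$, not a codimension condition on its homology, so $\bcone^{\bsc}_d$ contains pure diagrams supported in positions $\{a,\dots,a+d\}$ with $a>0$, which are not realizable by short complexes over $A$. The key observation is that the non-negativity of every term in a Boij--Söderberg decomposition of $\beta^R(F)$, together with the fact that this Betti table is supported in $[0,d]$, forces precisely those unwanted pure diagrams to drop out of the sum.
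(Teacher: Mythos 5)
Your argument is correct, and for the hard containment it is essentially the paper's proof made explicit: the paper takes the constant codimension sequence $(\dots,\varnothing,d,d,\dots)$, applies Theorem~\ref{th:main-bs} to get $\bcone^{\bsc}(R)=\bcone^{\bsc}(A)$, and then intersects with the subspace of tables supported in homological degrees $[0,d]$; the intersection step works precisely because of the positivity/support observation you spell out (each pure diagram $\beta(\bst)$ has a strictly positive entry in homological position $\inf(\bst)+d$, so diagrams with $\inf(\bst)>0$ cannot occur in a non-negative decomposition of a table supported in $[0,d]$), so your pure-diagram decomposition is the same mechanism. Where you genuinely diverge is the containment $\bcone^{\mathrm{short}}(A)\subseteq\bcone^{\mathrm{short}}(R)$: the paper obtains it from the equality case of Theorem~\ref{th:main-bs}, i.e.\ from Proposition~\ref{pr:main-bs}, whose proof in general invokes the Miller--Speyer general-position argument and Lemma~\ref{le:codim}, whereas your direct base-change argument ($F\otimes_AR$ remains minimal, and its homology vanishes at every graded prime $\fq\ne\fm_R$ because $F_{\fq\cap A}$ is a bounded exact, hence split exact, complex of projectives) exploits the fact that for short complexes the homology is already of finite length, so no genericity is needed; note that regularity of $A_{\fp}$ plays no role in the splitting. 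This gives a more self-contained, elementary proof of that inclusion, at the modest cost of redoing a special case of a result the paper simply quotes.
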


\begin{proof}
Let $\bsc$ be the codimension sequence $(\dots,\varnothing,\dim R,\dim R,\dots)$ with the first occurrence of $\dim R$ is in degree $0$.
Then $\bcone^{\bsc}(R)= \bcone^{\bsc}(A)$, by Theorem~\ref{th:main-bs}. Intersecting these cones with the cone $V^{[0,d]}$ yields the stated equality.
\end{proof}

In proving Theorem~\ref{th:main-bs} it will be expedient to use the notion of depth for complexes, and some results from \cite{Iyengar:1999}. In what follows we state the necessary definitions and results for local rings, with the implicit understanding that the corresponding statements in the graded context also hold.

\begin{chunk}
\label{ch:depth}
Let $R$ be a local ring with residue field $k$. By the \emph{depth} of an $R$-complex $M$  of  $R$-modules we mean the least integer $i$, possibly $\pm \infty$, such that $\mathrm{Ext}_R^i(k,M)\ne 0$; see \cite{Foxby/Iyengar:2001} for alternative descriptions.  Given a bounded complex $M$ let $\sup\HH *M$ denote the supremum of integers $i$ such that $\HH iM\ne 0$. For such an $R$-complex $M$ there is an inequality
\begin{equation}
\label{eq:depth-estimate}
\depth_RM \ge - \sup\HH *M\,.
\end{equation}
Equality holds if and only if $\depth_R \HH sM=0$ for $s\colonequals \sup \HH *M$. These claims are contained in \cite[Proposition~2.7(2) and Theorem~2.3]{Iyengar:1999}.

One version of the Auslander-Buchsbaum formula for complexes reads: If $F$ is finite free $R$-complex with $\HH{} F\ne 0$, then for any $R$-complex $M$ one has
\begin{equation}
\label{eq:AB-formula}
\begin{aligned}
\depth_R(F\otimes_RM) 
	&= \depth_RM - \pdim_RF \\
	& = \depth_RM - \depth R + \depth_RF\,.
\end{aligned}
\end{equation}
The first equality is \cite[Corollary~2.2]{Iyengar:1999}, and the second follows from the first applied with $M=R$.
\end{chunk}

\begin{lemma}
\label{le:AB-lemma}
Fix a codimension sequence  $\bsc$ for $R$,  an $R$-complex $F$ in $\perf^{\bsc}(R)$ and a finitely generated $R$-module $U$ satisfying $\depth_{R_\fp}U_{\fp}\ge \depth R_\fp$ for all $\fp$ in $\spec R\setminus \{\fm\}$. For any integer $i$, if
$c_i<\infty$, or $\depth_RU \ge \depth R$, then
\[
\codim_R \HH i{F\otimes_RU}\ge c_i\,.
\]
When $R$ is a finitely generated graded $k$-algebra, and $F$ and $U$ are also graded, then it suffices that the condition on depth holds for $\fp$ in $\proj R$.
\end{lemma}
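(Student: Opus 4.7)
The plan is to localize and reduce the claim to a single $\Tor$-vanishing statement at a local ring. Assume $c_i\ge 1$, since the conclusion is vacuous when $c_i\in\{\varnothing,0\}$. Fix a prime $\fp\in\spec R$ with $\height\fp<c_i$; the goal is to show $\HH{i}{F\otimes_R U}_\fp=0$. When $c_i<\infty$ one has $c_i\le\dim R$ and hence $\fp\ne\fm$ automatically, so the hypothesis $\depth_{R_\fp}U_\fp\ge\depth R_\fp$ available at non-maximal primes is all that is needed; when $c_i=\infty$ one must additionally handle $\fp=\fm$, and this is precisely where the extra hypothesis $\depth_R U\ge\depth R$ enters. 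Since $\bsc$ is non-decreasing, $c_j\ge c_i>\height\fp$ for every $j\ge i$, so the defining conditions of $\perf^{\bsc}(R)$ force $\HH{j}{F}_\fp=0$ for all $j\ge i$. The brutal truncation $(F_{\ge i-1})_\fp$ is then a finite free resolution of $N_\fp\colonequals(\operatorname{coker} d_i)_\fp$, so $N_\fp$ has finite projective dimension over $R_\fp$. Applying the long exact sequence of Tor to $0\to F_{\le i-1}\to F\to F_{\ge i}\to 0$, tensored with $U$ and localized at $\fp$, I would identify
\[
\HH{i}{F\otimes_R U}_\fp \ \cong\ \Tor_1^{R_\fp}(N_\fp,U_\fp).
\]

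The remaining task is a local statement, which I would prove as a separate lemma: if $(S,\fn)$ is noetherian local, $V$ is a finitely generated $S$-module satisfying $\depth_{S_\fq}V_\fq\ge\depth S_\fq$ at every $\fq\in\spec S$, and $N$ is finitely generated of finite projective dimension, then $\Tor_i^S(N,V)=0$ for all $i\ge 1$. I would argue by induction on $\dim S$, with an inner induction on $\pdim_S N$. The inner step uses a syzygy sequence $0\to K\to F_0\to N\to 0$ with $F_0$ free and $\pdim K=\pdim N-1$: the inner hypothesis applied to $K$ yields $\Tor_j^S(K,V)=0$ for $j\ge 1$, and the long exact sequence of Tor then forces $\Tor_i^S(N,V)=0$ for $i\ge 2$. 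The crucial case is $\Tor_1^S(N,V)=0$: one identifies this group with the kernel of $K\otimes_SV\to F_0\otimes_SV$, embedding it inside $K\otimes_SV$. If it were nonzero with an associated prime $\fp$, then either $\fp\ne\fn$---in which case localization gives a nonzero $\Tor_1^{S_\fp}(N_\fp,V_\fp)$, contradicting the outer induction at the smaller-dimensional ring $S_\fp$---or $\fp=\fn$. In the latter case $\operatorname{Ass}_S\Tor_1^S(N,V)\subseteq\operatorname{Ass}_S(K\otimes_SV)$, and the Auslander-Buchsbaum formula \eqref{eq:AB-formula} (applicable here because $\Tor_j(K,V)=0$ for $j\ge 1$) gives
\[
\depth_S(K\otimes_SV) \ =\ \depth_S K+\depth_S V-\depth S \ \ge\ \depth_S K \ \ge\ 1,
\]
the first inequality using $\depth_SV\ge\depth S$ and the second coming from $\pdim_S K=\pdim_S N-1\le\depth S-1$ by Auslander-Buchsbaum for $N$. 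Hence $\fn\notin\operatorname{Ass}(K\otimes_SV)$, a contradiction.

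The main obstacle is precisely this $\fn$-step in the local lemma: the positivity $\depth(K\otimes V)\ge 1$ hinges on having $\depth V\ge\depth S$ at the closed point, which is exactly what the hypothesis $\depth_R U\ge\depth R$ supplies in the $c_i=\infty$ case of the stated lemma. The graded variant is handled identically, with $\spec R$ replaced by $\proj R$ throughout and the irrelevant graded maximal ideal playing the role of $\fn$.
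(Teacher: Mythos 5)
Your argument is correct and arrives at the same depth count as the paper, but it is packaged differently. The paper isolates a single Claim --- if in addition $\depth_RU\ge\depth R$, then $\sup \HH{*}{F\otimes_RU}\le \sup\HH{*}{F}$ --- and proves it in a few lines by localizing at a prime minimal in the support of the top homology and invoking the equality case of \eqref{eq:depth-estimate} together with the derived Auslander--Buchsbaum formula \eqref{eq:AB-formula}; the lemma then follows by localizing at primes $\fq$ with $\height\fq<c_i$ (necessarily $\fq\ne\fm$ when $c_i<\infty$), exactly the reduction you perform. You instead use the brutal truncation to convert the problem into a module-level statement --- a module of finite projective dimension is Tor-independent of any $V$ with $\depth_{S_\fq}V_\fq\ge\depth S_\fq$ at every prime --- and prove that by a double induction (on $\dim S$, then on $\pdim N$) with an associated-primes argument, using \eqref{eq:AB-formula} only in the Tor-independent situation, where it reduces to the classical depth formula for modules. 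What your route buys is that it avoids the equality case of \eqref{eq:depth-estimate} and the formalism of depth for complexes; what it costs is the truncation bookkeeping and the induction, which the paper's three-line Claim makes unnecessary. The underlying mechanism (localize at an offending prime, contradict a depth bound coming from Auslander--Buchsbaum and the hypothesis $\depth V\ge\depth S$) is the same.

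Two small repairs. First, the short exact sequence you tensor should be $0\to F_{\le i-2}\to F\to F_{\ge i-1}\to 0$, consistent with your own observation that $(F_{\ge i-1})_\fp$ resolves $N_\fp=(\operatorname{coker} d_i)_\fp$; this immediately gives $\HH{i}{F\otimes_RU}_\fp\cong\Tor_1^{R_\fp}(N_\fp,U_\fp)$, whereas with $0\to F_{\le i-1}\to F\to F_{\ge i}\to 0$ one only obtains an embedding of $\HH{i}{F\otimes_RU}_\fp$ into $(\operatorname{coker}(F_{i+1}\to F_i)\otimes_RU)_\fp$ and an extra identification is needed. Second, the graded case is not literally ``replace $\spec R$ by $\proj R$ throughout'': once your outer induction localizes at a homogeneous prime, the resulting local ring is no longer graded and the later primes (e.g.\ associated primes over $S_\fp$) need not come from $\proj R$, so you need the depth hypothesis at non-homogeneous primes as well. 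The clean fix is the one the paper uses: the hypothesis at primes of $\proj R$ implies it at all primes of $\spec R\setminus\{\fm\}$ by \cite[Theorem~1.5.9]{Bruns/Herzog:1998}, after which your ungraded argument applies verbatim.
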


\begin{proof}
The key observation is the following:

\begin{claim}
Assume $\depth_RU\ge \depth R$ as well. Then $\sup \HH *{F\otimes_RU} \le \sup \HH *F$.
\end{claim}

Indeed, since $\sup\HH *F$ does not increase on localization, we can further localize at prime minimal in the support of $\HH s{F\otimes_RU}$, for $s\colonequals \sup \HH*{F\otimes_RU}$, and assume it has nonzero finite length. Then the claim about equality in \eqref{eq:depth-estimate} yields the first equality below:
\begin{align*}
-  \sup \HH *{F\otimes_RU}	
	&=  \depth_R (F\otimes_RU) \\
	& = \depth_RU - \depth R + \depth_R F \\
	&\ge \depth_RF\\
	& \ge - \sup \HH *F\,.
\end{align*}
The second equality is~\eqref{eq:AB-formula}.  The first  inequality holds by our hypotheses on $U$, and another application of \eqref{eq:depth-estimate} yields the last one. This settles the claim.

Fix an $i$. The case $c_i=\infty$ and $\depth_RU\ge \depth R$ follows from the claim.

Suppose $c_i<\infty$ and choose $\fq\in\proj R$ such that  $\height\fq < c_i$.  Since $c_i\le\dim R$ it follows that $\fq\ne \fm$.  The choice of $\fq$ means that $\sup \HH *{F_\fq} < i$, because $F$ is in $\perf^{\bsc}(R)$, and we have to verify that 
\[
\sup \HH *{(F\otimes_RU)_\fq} < i\,.
\]
This is covered by the claim, applied to the local ring $R_\fq$, the $R_\fq$-complex $F_\fq$, and $U_\fq$. The applies since $\fq$ is a non-maximal prime.

In the graded context, it suffices to observe that the conditions on depth for $\fp$ in $\proj R$ imply the condition for all primes in $\spec R\setminus\{\fm\}$ by \cite[Theorem~1.5.9]{Bruns/Herzog:1998}.
\end{proof}

\begin{chunk}
Let $\bsc$ be a codimension sequence of the form
\[
\bsc \colonequals (\dots,\varnothing, \varnothing, c_a,\dots,c_{b},\infty,\infty,\dots)
\]
with $0\le c_a\le c_b\le \dim R$. Consider the sequence 
\[
\bsc' \colonequals (\dots,\varnothing, \varnothing, c_a,\dots,c_{b},d,d,\dots)
\]
where $d=\dim R$. Evidently, $\bsc'\le \bsc$. 

\begin{lemma}
\label{le:trick} 
With $\bsc$ and $\bsc'$ as above, if $\bcone^{\bsc'}(R) \subseteq \bcone^{\bsc'}_{d}$, then $\bcone^{\bsc}(R) \subseteq \bcone^{\bsc}_{d}$.
\end{lemma}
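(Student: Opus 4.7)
The plan is a three-stage argument: reduce to the hypothesis via an inclusion of complex classes, classify the ``extra'' degree sequences that distinguish $\bcone^{\bsc'}_d$ from $\bcone^{\bsc}_d$, and eliminate those extras from any $\bcone^{\bsc'}_d$-decomposition of $\beta^R(F)$ for $F \in \perf^{\bsc}(R)$.

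First, $\bsc' \le \bsc$ yields $\perf^{\bsc}(R) \subseteq \perf^{\bsc'}(R)$: the $\bsc$-condition $c_i = \infty$ for $i > b$ forces $\HH iF = 0$, which in particular gives the weaker $\bsc'$-condition $\codim \HH iF \ge d$. By the hypothesis of the lemma this immediately gives the chain
\[
\bcone^{\bsc}(R) \subseteq \bcone^{\bsc'}(R) \subseteq \bcone^{\bsc'}_d.
\]

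Second, I compare the two cones of pure Betti diagrams. Since $\codim(\bst) \le d$ for every degree sequence, every $\bst$ compatible with $\bsc$ is also compatible with $\bsc'$. The ``extras''---sequences compatible with $\bsc'$ but not with $\bsc$---are precisely those with $\inf(\bst) > b$ and $\codim(\bst) = d$; each such extra pure table $\beta(\bst)$ is supported strictly in homological rows $> b$ (in fact on the staircase $(\inf(\bst), t_{\inf(\bst)}), \dots, (\inf(\bst)+d, t_{\inf(\bst)+d})$).

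Third, and most substantively, for $F \in \perf^{\bsc}(R)$ I must show some $\bcone^{\bsc'}_d$-decomposition of $\beta^R(F)$ avoids extras. The key structural fact is that the vanishing $\HH iF = 0$ for $i > b$ makes the stupid truncation $F_{\ge b+1}$, augmented with the image $N := \mathrm{im}(d_{b+1}) \subseteq F_b$, into a minimal finite free resolution of $N$; in particular $N$ has finite projective dimension over $R$. The strategy is induction on the top homological degree $n$ of $F$. The base case $n \le b$ is immediate: the Betti table is supported in rows $\le b$, so no decomposition can involve extras, since extras have strictly positive entries in rows $> b$ and pure diagrams cannot cancel in the positive cone. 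For the inductive step $n > b$ the plan is to modify $F$ by splicing the resolution of $N$ back into $F_{\le b}$, producing a complex still in $\perf^{\bsc}(R)$ with strictly smaller top, while tracking the change to $\beta^R(F)$ as a nonnegative rational combination of pure Betti tables in $\bcone^{\bsc}_d$.

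The principal obstacle is the splicing step in the inductive reduction: one has to realize the modification in such a way that each stage keeps the complex in $\perf^{\bsc}(R)$ and that the Betti-table difference at that stage is genuinely a nonnegative combination of $\bsc$-compatible pure diagrams. I expect this to require a recursive appeal to the lemma itself, applied to a shorter codimension sequence associated with $N$, together with a careful analysis of which Herzog--K\"uhl rays the intermediate Betti diagrams occupy.
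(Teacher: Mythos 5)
Your first two stages are fine and agree with the paper's setup: the chain $\bcone^{\bsc}(R)\subseteq\bcone^{\bsc'}(R)\subseteq\bcone^{\bsc'}_{d}$, and the identification of the ``extra'' pure diagrams as those $\beta(\bst)$ with $\inf(\bst)>b$ and $\codim(\bst)=d$. The gap is in stage three, which is the heart of the matter: the splicing induction is never carried out, and you yourself flag that it may require a recursive appeal to the very lemma being proved. Note also why the induction seems forced on you: your positivity argument, as stated, only rules out extras when $\beta^R(F)$ is supported in rows $\le b$, whereas an extra diagram can begin as low as row $b+1$, and $\beta^R(F)$ is in general nonzero in rows $b+1,\dots$ (the minimal resolution extends well past the top of the homology), so ``extras live in rows $>b$'' is not by itself enough.

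The missing ingredient, which makes any induction or splicing unnecessary, is a projective dimension bound on $F$ itself. For $F\in\perf^{\bsc}(R)$ the conditions $c_i=\infty$ for $i>b$ force $\HH iF=0$ for $i>b$, hence $\depth_RF\ge -b$ by \eqref{eq:depth-estimate}, and the Auslander--Buchsbaum formula \eqref{eq:AB-formula} gives $\pdim_RF=\depth R-\depth_RF\le d+b$; thus $\beta^R_{i,j}(F)=0$ for all $i\ge d+b+1$. On the other hand, every extra pure diagram has, by the Herzog--K\"uhl formula \eqref{eq:HK}, a strictly positive entry in its \emph{top} row $\inf(\bst)+d\ge d+b+1$. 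Since all entries of all pure diagrams are nonnegative, any expression of $\beta^R(F)$ as a positive rational combination of $\bsc'$-compatible pure diagrams can involve no extra at all (an extra would force $\beta^R_{i,j}(F)\ne 0$ for some $i\ge d+b+1$), so the combination already lies in $\bcone^{\bsc}_{d}$. This is exactly the paper's argument, phrased there via the decomposition of Theorem~\ref{th:EE} into shifted resolutions $G^n$ over $A$ with homology concentrated in a single degree $i_n$: if $i_n>b$ the homology has finite length, its resolution has length $d$, and $\beta_{i_n+d,j}(G^n)\ne 0$ for some $j$ with $i_n+d\ge d+b+1$, a contradiction. So your base-case positivity mechanism is the right one; what is missing is only the cutoff $d+b$ (rather than $b$) on the rows of $\beta^R(F)$, after which the proof closes in one stroke.
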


\begin{proof}
Pick $F \in \perf^{\bsc}(R)$. Then one has 
\[
\pdim_RF = \depth R - \depth_RF \le d + b\,.
\]
Consequently, one has
\[
 \beta_{i, j}^R(F) = 0\quad\text{for $i\ge d + b + 1$ and all $j$.}
 \]
 This fact will be used further below.

Since $\bsc' \leq \bsc$, we have  $\bcone^{\bsc}(R) \subseteq \bcone^{\bsc'}(R)$, and also for $A$. Thus, by hypothesis,  $\beta^R(F)$ is in $\bcone^{\bsc'}(A)$. Theorem~\ref{th:EE} gives a decomposition
\[
\beta^R(F) = \sum_nr_n \beta^A(G^n) 
\]
 as a finite sum, with $r_n>0$ and  $G^n \in \perf^{\bsc'}(A)$, with $G^n$ the shifted resolution of an $A$-module. That is, $\HH i{G^n}= 0$ for all but one value, say $i_n$, of $i$.  Moreover, by the definition of $\bsc'$, we have
 \[
 \codim_A \HH{i_n}{G^n} =
 \begin{cases}
 &c_{i_n} \text{ or more if $a\le i_n\le b$}\\
 &\dim A \text{ if $i_n>b$.}
 \end{cases}
 \]
In particular, when $i_n > b$ the Auslander-Buchsbaum formula yields 
\[
\beta_{d+b+1, j}(G^n) \ne 0 \quad \text{for some $j$.}
\]
But then $\beta^R_{d +b+1, j}(F) \ne 0$ as well, a contradiction. Thus $a\le i_n\le b$ for each $n$, that is to say,  $G^n$ is in $\perf^{\bsc}(A)$.
\end{proof}
\end{chunk}

The last part of Theorem~\ref{th:main-bs} holds over any field $k$, so we record it separately.

\begin{proposition}
\label{pr:main-bs}
Let $R$ be as in \ref{ch:notation} and  $\bsc$ a codimension sequence for $R$.
If $c_i\ne\infty$ for all $i$, or $R$ is Cohen-Macaulay, then  $\bcone^{\bsc}(R)\supseteq \bcone^{\bsc}_{d}$.
\end{proposition}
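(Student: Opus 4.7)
My plan is to show, for each degree sequence $\bst$ compatible with $\bsc$, that the pure table $\beta(\bst)$ lies in $\bcone^{\bsc}(R)$; since $\bcone^{\bsc}_d$ is by definition the rational cone spanned by these, this yields the desired containment. I would build, for each such $\bst$, a finite free $R$-complex $F\in\perf^{\bsc}(R)$ with $\beta^R(F)=b\cdot\beta(\bst)$ for some $b>0$, by ascent from the linear Noether normalization.

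Concretely, I would apply the Eisenbud--Erman theorem (Theorem~\ref{th:EE}) to the polynomial ring $A$ to obtain a pure $A$-complex $G$ realizing $b\cdot\beta(\bst)$; this is the shift by $a=\inf(\bst)$ of a minimal $A$-free resolution of a Cohen--Macaulay $A$-module $M$ of codimension $l=\codim(\bst)$. Setting $F:=G\otimes_A R$, the differential of $G$ has entries in $A_{\ge 1}\subseteq\fm_R$, so $F$ is a minimal finite free $R$-complex with $\beta^R(F)=\beta^A(G)=b\cdot\beta(\bst)$. The positional Betti constraints for $\perf^{\bsc}(R)$ hold automatically because the Betti support lies in columns $[a,a+l]$, where compatibility forces $c_i\ge c_a\ge 0$. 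It remains to verify $\codim_R \HH{i}{F}\ge c_i$ for $i\in[a,a+l]$, with $\HH{i}{F}\cong\Tor^A_{i-a}(M,R)$. In the Cohen--Macaulay case, $R$ is maximal Cohen--Macaulay over the regular ring $A$, hence $A$-free by Auslander--Buchsbaum; all higher Tor's then vanish, leaving $\HH{a}{F}=M\otimes_A R$ with codimension $l\ge c_a$, which closes that case.

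The case $c_i\ne\infty$ for all $i$ is the main obstacle. Here I would invoke Lemma~\ref{le:AB-lemma} in the setting of the polynomial ring $A$, taking $G\in\perf^{\bsc}(A)$ as the complex and the $A$-module $R$ as $U$; the hypothesis ``$c_i<\infty$'' activates the first clause of the lemma and yields $\codim_A \HH{i}{G\otimes_A R}\ge c_i$, which transfers to $R$ via the equality of $R$- and $A$-codimensions under the finite extension $A\subseteq R$. The delicate point, and the step I would expect to be technically hardest, is verifying the depth hypothesis $\depth_{A_\fp} R_\fp\ge\depth A_\fp$ for $\fp\in\proj A$: by invariance of depth under finite extensions, this amounts to $\depth R_\fq\ge\height\fq$ at each graded prime $\fq$ of $R$ above $\fp$ --- a Cohen--Macaulay-on-the-punctured-spectrum property. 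To handle the general case, I would replace the pure module $M$ by a generic linear translate under the $\mathrm{GL}_d(k)$-action on $A$, which preserves Betti tables: for a generic $M$, $\supp_A M$ meets the non-Cohen--Macaulay locus of $R$ only at the maximal ideal, forcing the higher Tor's to have finite length and hence $R$-codimension $d\ge c_{a+j}$.
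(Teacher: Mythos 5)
Your Cohen--Macaulay case is essentially sound and agrees with the paper's (once $R$ is free over $A$, all higher homology vanishes, and since $R$ is Cohen--Macaulay the identity $\codim_R N=\dim R-\dim_R N$ gives $\codim_R(M\otimes_AR)=\codim_AM=l\ge c_a$). The case where all $c_i$ are finite, however, has a genuine gap, and it sits exactly where you lean on ``the equality of $R$- and $A$-codimensions under the finite extension $A\subseteq R$''. No such equality holds in general: one always has $\codim_R(R\otimes_AM)\le\codim_AM$, and the inequality can be strict --- see Example~\ref{ex:codim-failure} --- so a lower bound on codimension over $A$ does not transfer to one over $R$. In particular, even after your generic translate you still owe the bound $\codim_R\bigl((gM)\otimes_AR\bigr)\ge c_a$ for the homology in position $a$, and your argument says nothing about it. The paper fills precisely this hole with Lemma~\ref{le:codim}: because $\pdim_A(gM)$ is finite and the modules $\Tor^A_i(R,gM)$ have finite length for $i\ge1$, one gets $\codim_R((gM)\otimes_AR)=\codim_A(gM)$; the authors stress that this very point is glossed over in \cite{Eisenbud/Erman:2017}, so it cannot be waved through.

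A second problem is the mechanism you propose for the generic translate. Arranging that $\supp_A(gM)$ meets the locus where $R$ fails to be a free (equivalently, maximal Cohen--Macaulay) $A$-module only at $\fm$ is impossible in general: if that locus has codimension $1$ and $\codim_AM\le d-2$, every translate meets it in positive dimension, so the higher Tor modules need not have finite length for the reason you give. What is actually needed --- and what the paper invokes --- is the homological Kleiman--Bertini theorem of Miller and Speyer \cite{Miller/Speyer:2008}, which yields, for generic $g$, vanishing of the higher sheaf Tor's, hence finite length of $\Tor^A_i(R,gM)$ for $i\ge1$, even when the supports meet in positive dimension; it also works in positive characteristic, where naive Kleiman transversality can fail. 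Your instinct to abandon Lemma~\ref{le:AB-lemma} here was right --- its punctured-spectrum depth hypothesis amounts to $R$ being Cohen--Macaulay away from $\fm$, which is not available --- but the replacement argument needs Miller--Speyer together with Lemma~\ref{le:codim}, which is exactly the paper's route.
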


\begin{proof}
With $A$ as in \ref{ch:notation}, one has $\bcone^{\bsc}(A)=\bcone^{\bsc}_{d}$, by Theorem~\ref{th:EE}, so the desired inclusion is that $\bcone^{\bsc}(R)\supseteq \bcone^{\bsc}(A)$.   When $c_i<\infty$, this is verified in the course of the proof of \cite[Theorem~9.1]{Eisenbud/Erman:2017}; see in particular, \cite[Lemma~9.5, 9.6]{Eisenbud/Erman:2017} and \cite{Miller/Speyer:2008}. 
We give the argument, for one key equality in the proof was not justified in \emph{op.~cit.}

One can assume $c_i=\varnothing$ for $i<0$ and $c_0$ is finite.  Since $\bcone^{\bsc}(A)$ is spanned by (shifts) of the Betti tables of finitely generated graded $A$-modules, it suffices to prove that if $M$ is a finitely generated with $\codim_AM\ge c_0$, then $\beta^A(M)$ is in $\bcone^{\bsc}(R)$. By \cite{Miller/Speyer:2008}, pulling back $M$ along a $k$-algebra automorphism of $A$, one can suppose that the $R$-modules $\Tor^A_i(R,M)$ have finite length for each $i\ge 1$. Let $F$ be the minimal free resolution of $M$ over $A$, so the finite free complex of $R$-modules $R\otimes_AF$ satisfies 
\[
\beta^R(R\otimes_AF) = \beta^A(F) = \beta^A(M)\,.
\]
Since $\HH i{R\otimes_AF}\cong \Tor^A_i(R,M)$ has finite length for $i\ge 1$, and $c_i<\infty$, one gets
\[
\codim_R\HH i{R\otimes_AF} \ge \dim R \ge c_i \quad\text{for $i\ge 1$.}
\]
It remains to observe that
\[
\codim_R \HH 0{R\otimes_AF} = \codim_R(R\otimes_AM) = \codim_A(M)\,.
\]
The equality on the right is by Lemma~\ref{le:codim}; this is glossed over in the proof of \cite[Theorem~9.1]{Eisenbud/Erman:2017}. This completes the proof in the case when each $c_i$ is finite. 

Suppose that $R$ is Cohen-Macaulay. Fix an $A$-complex $G$ in $\bcone^{\bsc}(A)$ and consider the $R$-complex $F\colonequals R\otimes_AG$.
Evidently $\beta^R(F) = \beta^A(G)$, so for the desired inclusion one has only to verify that $F$ is in $\bcone^{\bsc}(R)$. Since $R$ is Cohen-Macaulay and finite as an $A$-module for any finitely generated $R$-module $M$ one has
\[
\codim_RM =\dim R - \dim_RM = \dim A - \dim_AM= \codim_AM\,.
\]
Thus to verify that $F$ is in $\bcone^{\bsc}(R)$ it suffices to verify that 
\[
\codim_A \HH i{R\otimes_AG} \ge c_i \quad\text{for each $i$.}
\]
This follows from Lemma~\ref{le:AB-lemma}, applied to the $A$-complex $G$ and for $U=R$.
\end{proof}

\begin{chunk}
\label{ch:regularity-betti-table} 
With $A$ as in \ref{ch:notation}, the regularity of a finitely generated graded $R$-module $M$, see \ref{ch:regularity}, can be read from the Betti table of $M$ over $A$, as follows: 
\[
\reg_RM  = \max\{j \mid \beta^A_{i,i+j}(M) \ne 0 \}\,.
\]
See \cite[Theorem~4.3.1]{Bruns/Herzog:1998}.  
\end{chunk}

\begin{chunk}[Proof of Theorem~\ref{th:main-bs} when the characteristic of $k$ is nonzero]
\label{ch:main-bs-p}
Let $A$ be as in \ref{ch:notation}. Given Theorem~\ref{th:EE}, the desired inclusion is  $\bcone^{\bsc}(R)\subseteq \bcone^{\bsc}(A)$.  By Lemma~\ref{le:trick} we can assume the codimension sequence $\bsc$ satisfies $c_i\le \dim R$ for each $i$.

Let $(U^n)_{n\geqslant 0}$ be a lim Ulrich sequence of graded $R$-modules provided by Theorem~\ref{th:limU-graded-modules}. This is where we need the hypothesis that the characteristic of $k$ is positive. Fix an  $R$-complex $F$ in $\perf^{\bsc}(R)$ and consider the $R$-complexes $F\otimes_R U^n$. 

For any $M $ in $\grmod R$ one has $\codim_A M\ge \codim_RM$, so Lemma~\ref{le:AB-lemma} yields that, viewed as an $A$-complex by restriction of scalars (the finite free resolution of) the complex $F\otimes_R U^n$ is in $\bcone^{\bsc}(A)$.  Set $u_n \colonequals \nu_R (U^n)$. Since $A$ is regular, the canonical surjection $\kos{\bsx}A\xrightarrow{\sim} k$ is a free resolution over $A$, so for all integers $i,j$ one has equalities
\begin{align*}
\beta^A_{i,j}(F \otimes_R U^n)  
	&= \rank_k \HH i{F \otimes_R U^n \otimes_A \kos{\bsx}A}_j \\
	&= \rank_k \HH i{F \otimes_R U^n \otimes_R \kos{\bsx}R}_j \\
	&=\rank_k \HH i{F \otimes_R \kos{\bsx}{U^n}}_j
\end{align*}
These equalities and Lemma~\ref{le:perfect} yield
\[
\lim_{n \to \infty} \frac{\beta_{i,j}^A(F\otimes_R U^n)}{u_n} = \beta_{i,j}^R(F)\,.
\]
This proves that $\beta^R(F)$ is in the closure, in the point-wise topology on $V$, of the sub-cone of $\bcone^{\bsc}(A)$ spanned by all the $\beta^A(F\otimes_RU^n)$, for $n\ge 0$. We claim that that the same is true in the finite topology on $V$, and to that end it suffices to verify:

\begin{claim}
For all but finitely many pairs $(i,j)$ one has $\beta^A_{i,j}(F\otimes_RU^n)=0$ for all $n$. 
\end{claim}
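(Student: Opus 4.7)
The plan is to show that $\beta^A_{i,j}(F\otimes_R U^n)$ vanishes outside a fixed finite rectangle in $(i,j)$-space, uniformly in $n$, by separately bounding $i$ and bounding $j$ from above and below. The uniform inputs will be the integers $t_0, t_1$ supplied by Theorem~\ref{th:limU-graded-modules}(2) together with the fact that $F$ is a finite free complex over $R$.

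For $i$, I would invoke the identity $\beta^A_{i,j}(F\otimes_R U^n)=\rank_k \HH i{F\otimes_R \kos{\bsx}{U^n}}_j$ displayed just above the Claim. Since $F$ has a fixed homological amplitude, say $[a,b]$, and the Koszul complex $\kos{\bsx}{R}$ is concentrated in homological degrees $[0,d]$, their tensor product lies in homological degrees $[a, b+d]$, so $\beta^A_{i,j}(F\otimes_R U^n)=0$ for $i\notin [a, b+d]$, a range independent of $n$.

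For the lower bound on $j$, Theorem~\ref{th:limU-graded-modules}(2) supplies an integer $t_0$ with $U^n_j=0$ for $j<t_0$ and every $n$. Combined with the fact that the generators of the finite free complex $F$ occupy a bounded range of internal degrees, say bounded below by $p$, this forces the internal degrees of $F\otimes_R U^n$, and hence the pairs $(i,j)$ with $\beta^A_{i,j}(F\otimes_R U^n)\ne 0$, to satisfy $j\ge p+t_0$.

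The upper bound on $j$ is the delicate step, and is what I expect to be the main obstacle. Here I would appeal to regularity. By \ref{ch:regularity} and Theorem~\ref{th:limU-graded-modules}(2) one has $\reg_A(U^n)=\reg_R(U^n)\le t_1$ for every $n$, hence $\reg_A(U^n(-j))\le t_1+j$. For each $i\in [a,b]$, the finite free module $F_i=\bigoplus_j R(-j)^{\beta^R_{i,j}(F)}$ involves only finitely many $j$, bounded above by some constant $q$; hence $F_i\otimes_R U^n$ is a finite direct sum of twists of $U^n$ with $A$-regularity at most $t_1+q$, uniformly in $n$. Propagating this bound through the exact triangles arising from successive brutal truncations of $F$, and using the standard subadditivity of $\reg_A$ along triangles, yields a constant $r$, independent of $n$, with $\reg_A(F\otimes_R U^n)\le r$. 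Thus $\beta^A_{i,j}(F\otimes_R U^n)\ne 0$ forces $j\le i+r\le (b+d)+r$, completing the proof of the Claim.
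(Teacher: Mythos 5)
Your argument is correct, but it runs along a different track from the paper's. The paper first records that the Betti tables $\beta^A(U^n)$ lie in a single finite rectangle independent of $n$: the range $i\in[0,\dim A]$ is automatic over the polynomial ring $A$, the bound $j\ge t_0$ comes from $U^n_j=0$ for $j<t_0$, and the bound $j\le i+t_1$ comes from $\reg U^n\le t_1$ together with \ref{ch:regularity} and \ref{ch:regularity-betti-table}; it then feeds this into the change-of-rings spectral sequence
\[
\mathrm{E}^2_{p,q}=\Tor^R_p(F,k)\otimes_k\Tor^A_q(U^n,k)\Longrightarrow \Tor^A_{p+q}(F\otimes_RU^n,k),
\]
so that the finiteness of the support of $\beta^R(F)$ forces the $\mathrm{E}^2$-page, and hence the abutment, to be supported in a finite set of bidegrees independent of $n$. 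You instead bound the three ranges separately: the homological range via the displayed identity $\beta^A_{i,j}(F\otimes_RU^n)=\rank_k\HH{i}{F\otimes_R\kos{\bsx}{U^n}}_j$ and the amplitude of $F\otimes_R\kos{\bsx}{R}$; the lower bound on $j$ from $t_0$ and the lowest twist occurring in $F$; and the upper bound on $j$ by pushing $\reg_A(U^n)\le t_1$ through the brutal truncation triangles of $F$ (these survive $-\otimes_RU^n$ because $F$ is degreewise free, so the truncation sequences are degreewise split). This is in effect a hand-made version of the filtration underlying the paper's spectral sequence: what it buys is the avoidance of the spectral sequence, at the cost of invoking Castelnuovo--Mumford regularity for complexes, its behaviour under shifts, and its subadditivity along exact triangles, none of which the paper sets up beyond the module case. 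That cost is small: if you take the Betti-table description $\reg_A(C)=\sup\{\,j-i\mid \Tor^A_i(C,k)_j\ne0\,\}$ as the definition throughout, then the subadditivity is immediate from the long exact sequence in $\Tor^A(-,k)$ and the implication $\beta^A_{i,j}\ne 0\Rightarrow j\le i+\reg_A$ is tautological, so no comparison with the local-cohomology definition is needed. With that clarification your proof is complete and the bounds are uniform in $n$, as required.
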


Let $t_0$ and $t_1$ be as in Theorem~\ref{th:limU-graded-modules}. Given that $\reg_R(U^n)$ can be computed in terms of $\beta^A_{i,j}(U^n)$---see  \ref{ch:regularity-betti-table}---we get that
\[
\beta^A_{i,j}(U^n) = 0\quad \text{unless $i\in [0,\dim A]$ and  $j\in [t_0,t_1+\dim A]$.}
\]
The stated claim follows from the usual change of rings spectral sequence:
\[
\mathrm{E}_{p,q}^2 \colonequals \Tor^R_p(F,k)\otimes_k \Tor^A_q(U^n,k) \Longrightarrow \Tor^A_{p+q}(F\otimes_RU^n,k)\,.
\]

Since $\bcone^{\bsc}(A)$ is closed in the finite topology on $V$, by Corollary~\ref{co:bst}, we deduce that $\bcone^{\bsc}(R)\subseteq \bcone^{\bsc}(A)$, as desired.

The last part of the statement follows from Proposition~\ref{pr:main-bs}.
\end{chunk}

\subsection*{Characteristic zero}
Next we explain how to deduce the characteristic zero case of Theorem~\ref{th:main-bs} from that of positive characteristic. The key is the following result, whose proof is a standard argument.

\begin{lemma}
\label{le:reduction-char-p}
Let $k$ be a field of characteristic zero,  $R$ a finitely generated graded $k$-algebra, and  fix $F$ in $\perf(R)$. There exists a field $k_p$ of positive characteristic $p$, a finitely generated graded $k_p$-algebra $R_p$, and an $R_p$-complex $F_p$ in $\perf(R_p)$ such that \begin{gather*}
\beta^{R_p}(F_p) = \beta^R(F)\quad\text{and}\\
\codim_{R_p} \HH i{F_p} = \codim_{R} \HH i{F}\quad\text{for each $i$.} 
\end{gather*}
Moreover, if the $k$-algebra $R$ admits a linear normalization, then the $k_p$-algebra $R_p$ can be chosen to admit one as well.
\end{lemma}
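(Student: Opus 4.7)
The strategy is a standard spreading-out / reduction-mod-$p$ argument. First, I would choose a finitely generated $\bbZ$-subalgebra $\Lambda\subset k$ over which everything of interest is defined: pick a finite set of homogeneous $k$-algebra generators of $R$, a finite set of homogeneous relations among them, and the (finitely many) coefficients appearing in the differentials of $F$. After collecting these coefficients into $\Lambda$, I obtain a finitely generated graded $\Lambda$-algebra $R_\Lambda$ (with $\Lambda$ in degree zero) together with a bounded complex $F_\Lambda$ of finitely generated graded free $R_\Lambda$-modules satisfying $R_\Lambda\otimes_\Lambda k \cong R$ and $F_\Lambda\otimes_\Lambda k \cong F$. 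By replacing $F$ with a quasi-isomorphic minimal complex before descending, I may arrange that the differentials of $F_\Lambda$ have entries in $(R_\Lambda)_{\geqslant 1}$, so that every specialization remains minimal.

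Next I would apply generic freeness: there exists a nonzero $s\in \Lambda$ such that, after inverting $s$, the graded algebra $R_\Lambda$, each boundary module $B_i(F_\Lambda)$, each cycle module $Z_i(F_\Lambda)$, and hence each homology module $\HH i{F_\Lambda}$ are flat (in fact free) $\Lambda[1/s]$-modules in every fixed degree. Replacing $\Lambda$ by $\Lambda[1/s]$, base change commutes with homology, so for any residue field $k_\fn\colonequals \Lambda/\fn$ of a maximal ideal $\fn\subset \Lambda$ I get $\HH i{F_\Lambda\otimes_\Lambda k_\fn}\cong \HH i{F_\Lambda}\otimes_\Lambda k_\fn$, and Hilbert functions are preserved under the specialization. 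If, in addition, $\bsx=x_1,\dots,x_d$ is a linear system of parameters for $R$, I enlarge $\Lambda$ so that $\bsx\in (R_\Lambda)_1$ and invert one further element to make $R_\Lambda$ a finite free $\Lambda[\bsx]$-module; this finiteness then descends to every fiber.

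By the Nullstellensatz for finitely generated $\bbZ$-algebras, the maximal spectrum of $\Lambda$ has dense image in $\spec(\bbZ)$, so I may pick a maximal ideal $\fn\subset \Lambda$ whose residue field $k_p\colonequals \Lambda/\fn$ has positive characteristic $p$. Define $R_p\colonequals R_\Lambda\otimes_\Lambda k_p$ and $F_p\colonequals F_\Lambda\otimes_\Lambda k_p$. Minimality of $F_\Lambda$ is preserved because $\fn\subset \Lambda$ sits in degree zero, so the entries of the differential of $F_p$ still lie in the irrelevant maximal ideal $\fm_p$ of $R_p$; consequently $\beta^{R_p}_{i,j}(F_p)=\rank_{k_p}(F_p)_{i,j}=\rank_k F_{i,j}=\beta^R_{i,j}(F)$. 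Flatness over $\Lambda$ preserves graded Hilbert functions, hence Krull dimensions, so $\dim R_p=\dim R$ and $\dim_{R_p}\HH i{F_p}=\dim_R\HH iF$ for every $i$, and therefore the codimensions agree. If $\bsx$ is a linear system of parameters for $R$, then its image in $R_p$ is still a linear system of parameters, since $R_p$ remains finite over $k_p[\bsx]$; so $R_p$ admits a linear Noether normalization as required.

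The main obstacle is the bookkeeping of generic freeness: one must invert only finitely many elements of $\Lambda$ to simultaneously force $R_\Lambda$, each $\HH i{F_\Lambda}$, and (if needed) the module structure $\Lambda[\bsx]\hookrightarrow R_\Lambda$ to be $\Lambda$-flat, so that Betti numbers, codimensions of homology, and the linear Noether normalization all specialize correctly. Once this finite localization is carried out, the existence of a characteristic-$p$ point of $\spec\Lambda$ delivers the data $(k_p,R_p,F_p)$ with the stated properties.
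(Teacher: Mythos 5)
Your spreading-out framework is the same as the paper's, and most of it (descending $R$ and $F$ to a finitely generated $\bbZ$-algebra $\Lambda$, generic freeness so that homology commutes with base change, preserving minimality and hence Betti numbers, and preserving a linear Noether normalization) is fine. But there is a genuine gap at the final step, where you conclude that the codimensions agree because Hilbert functions, and hence Krull dimensions, are preserved. In this paper $\codim_R M$ is by definition the \emph{height of the annihilator} of $M$, not $\dim R-\dim_R M$; these can differ when $R$ is not equidimensional (the paper's own Example~\ref{ex:codim-failure}, $R=k[x,y,z]/(x,y)\cap(z)$, gives a module with $\dim R-\dim M=1$ but codimension $0$), and the lemma makes no equidimensionality or Cohen--Macaulay hypothesis. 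So knowing $\dim R_p=\dim R$ and $\dim_{R_p}\HH i{F_p}=\dim_R\HH iF$ does not yield $\codim_{R_p}\HH i{F_p}=\codim_R\HH iF$. What is actually needed, and what the paper supplies via the Hochster--Huneke ``generic behaviour'' results, is (i) that after inverting finitely many elements of $\Lambda$ the annihilators of the homology modules specialize correctly (equivalently, $\supp \HH i{F_p}=V(\mathrm{ann}(\HH i{F_\Lambda})R_p)$, which your generic freeness does give), and (ii) that the \emph{heights} of these ideals are preserved under reduction modulo the chosen maximal ideal of $\Lambda$ -- a statement about preserving the component structure of the support, not just dimensions, and one that again requires inverting finitely many elements. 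Your proof omits step (ii) entirely and replaces it with the invalid dimension-count inference, so as written it does not establish the codimension equality.

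A smaller point: the lifted data $F_\Lambda$ is a priori only a diagram of free modules, since $R_\Lambda\to R$ need not be injective and the relations $d^2=0$ may fail in $R_\Lambda$; one must invert an element of $\Lambda$ (the entries of $d^2$ vanish already over the fraction field of $\Lambda$) before speaking of a complex. This is routine and easily repaired, unlike the codimension issue above.
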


\begin{proof}
Write $R\colonequals k[x_1,\dots,x_n]/J$, where $J=(f_1,\dots,f_t)$ is a homogenous ideal. Pick a finitely generated $\bbZ$-subalgebra $W$ of $k$ such that the coefficients of the $f_i$ and the coefficients in the entries in the differentials of $F$ are in $W$.
Thus we have a  graded ring $R_W\colonequals W[x_1,\dots,x_n]/(f_1,\dots, f_t)$ over $W$ viewed as a ring concentrated in degree $0$, and such that $R= k\otimes_W R_W$.

When $R$ admits a linear Noether normalization, then by \cite[(2,1,8)(h)]{Hochster/Huneke:2100}, one can invert elements in $W$ to ensure that the inclusion $W[x_1,\dots,x_d]\subseteq R_W$ is module-finite, with cokernel $W$-free. It follows that for all maximal ideal $\fm$ of $W$ base-change along $W\to W/\fm$ induces a map
\[
(W/\fm)[x_1,...,x_d]\to R_{W/\fm}\,,
\]
that is a linear Noether normalization of $R_{W/\fm}$.

Write $\beta^R(F) = (\beta_{i,j})$. Evidently there exists a diagram (not necessarily a complex) of finite free $R_W$-modules
\[
F_W\colonequals \cdots \lra \bigoplus R_W(-j)^{\beta_{i+1,j}} \lra  \bigoplus R_W(-j)^{\beta_{i,j}} \lra  \bigoplus R_W(-j)^{\beta_{i-1,j}}\lra\cdots
\]
such that $F=k\otimes_W F_W$. With $Q$ the field of fractions of $W$ one has an isomorphism
\[
k\otimes_W F_W\cong k\otimes_Q(Q\otimes_WF_W)
\]
In particular, $Q\otimes_WF_W$ is a complex of $R_Q$-modules and the codimension of the $R_Q$-module $\HH i{Q\otimes_WF_W}$ equals that of the $R$-module $\HH iF$, for each $i$. 

Inverting further elements in $W$, one can ensure that $F_W$ is complex and for each maximal ideal $\fm$ of $W$ and integer $i$ one has
\[
\codim_{R_{W/\fm}} \HH i{F_{W/\fm}}=  \codim_{R_W} \HH i{F_{W}} = \codim_R \HH iF\,.
\]
Indeed, this follows from \cite[(2.1.14)(g) last sentence, and (2.3.9)(c)]{Hochster/Huneke:2100}; the first reference tells us we can  localize at elements in $W$ to preserve the annihilators of the homology modules (note that $k\otimes_W\HH i{F_W}= \HH iF$), and the second one says that by 
inverting more elements in $W$ the height of an ideal is preserved; see  (2.3.3) and (2.3.4) in \emph{op.~cit.} for notation.

Finally, pick a maximal ideal $\fm$ of $W$ such that $\HH {}{F_W}\otimes_{R_W}W/\fm W$ is nonzero and set $F_p\colonequals W/\fm \otimes_W F_W$. It is clear the $R_p$-complex $F_p$ has the same Betti numbers as $F$; the codimensions of their homology modules is also the same, by construction. It remains to note that $W/\fm$ has positive characteristic $p$.
\end{proof}

\begin{chunk}[Proof of Theorem~\ref{th:main-bs} when the characteristic of $k$ is zero]
\label{ch:main-bs-0}
This follows from  Lemma~\ref{le:reduction-char-p} and the case of positive characteristic, settled in \ref{ch:main-bs-p}. 
\end{chunk}

We do not know  whether Theorem~\ref{th:bs-cohomology} also holds over fields of characteristic zero.  The  method of reduction to positive characteristics exploited above does not appear to work in the context of cohomology tables.

\appendix

\section{Codimension}
In this section $A\hookrightarrow R$ is an integral extension of noetherian rings. For any finitely generated $A$-module $M$, it is easy to see that there is an inequality
\[
\codim_R(R\otimes_AM) \le \codim_AM\,.
\]
Equality holds when, for instance, the going-down theorem holds for the extension $A\to R$, but not always; see Example~\ref{ex:codim-failure}. Here is one other situation in which equality holds; this comes up in the proof of Proposition~\ref{pr:main-bs}.

\begin{lemma}
\label{le:codim}
Let $A\to R$ as above and $M$ a finitely generated $A$-module with $\pdim_AM$ finite and such that
\[
\length_R \Tor^A_i(R,M)  < \infty \quad\text{for all $i\ge 1$.}
\]
There is an equality $\codim_R(R\otimes_AM) = \codim_AM$.
\end{lemma}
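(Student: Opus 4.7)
Plan. Since the easy inequality $\codim_R(R\otimes_A M) \le \codim_A M$ is noted in the paragraph preceding the lemma, it suffices to prove the reverse inequality $\codim_R(R\otimes_A M) \ge c$, with $c := \codim_A M$. Set $p := \pdim_A M$ and let $F$ be a minimal $A$-free resolution of $M$ of length $p$. The finite free $R$-complex $G := R\otimes_A F$ then satisfies $\HH{0}{G} = R\otimes_A M$, while $\HH{i}{G} = \Tor_i^A(R, M)$ is of finite length over $R$ for $i\ge 1$ by hypothesis.

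I would choose a prime $\mathfrak{P}$ of $R$ minimal over $\operatorname{ann}_R(R\otimes_A M)$ realizing $\height_R \mathfrak{P} = \codim_R(R\otimes_A M)$, and set $\fp := \mathfrak{P}\cap A$. A key preliminary step is to verify that $\fp$ is itself a minimal prime of $\operatorname{ann}_A M$: any strict predecessor $\fp' \subsetneq \fp$ with $\fp' \supseteq \operatorname{ann}_A M$ would lift, by going-up for the integral extension $A\hookrightarrow R$, to a prime $\mathfrak{P}' \subsetneq \mathfrak{P}$ with $\mathfrak{P}'\cap A = \fp'$; then $M_{\fp'}\ne 0$ forces $(R\otimes_A M)_{\mathfrak{P}'}\ne 0$, contradicting the minimality of $\mathfrak{P}$ in the support of $R\otimes_A M$. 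Consequently $M_\fp$ is a nonzero finite-length $A_\fp$-module of finite projective dimension, and Auslander--Buchsbaum together with the Peskine--Szpiro intersection inequality force $A_\fp$ to be Cohen--Macaulay with $\height \fp = \dim A_\fp = \pdim_{A_\fp} M_\fp \ge c$.

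The central case is when $\mathfrak{P}$ is not maximal in $R$: then the finite-length higher Tors $\Tor_i^A(R,M)_\mathfrak{P}$ vanish, so $G_\mathfrak{P}$ is a finite $R_\mathfrak{P}$-free resolution of the finite-length module $(R\otimes_A M)_\mathfrak{P}$. The change-of-rings spectral sequence for the local map $A_\fp \to R_\mathfrak{P}$ degenerates because $\Tor_j^{A_\fp}(R_\mathfrak{P}, M_\fp) = 0$ for $j\ge 1$, yielding the equality $\pdim_{R_\mathfrak{P}}(R\otimes_A M)_\mathfrak{P} = \pdim_{A_\fp} M_\fp$. A second application of Auslander--Buchsbaum/Peskine--Szpiro then forces $R_\mathfrak{P}$ to be Cohen--Macaulay with $\dim R_\mathfrak{P} = \pdim_{R_\mathfrak{P}}(R\otimes_A M)_\mathfrak{P} \ge c$, giving the desired $\height_R \mathfrak{P} \ge c$.

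The main obstacle is the remaining case where $\mathfrak{P}$ is maximal in $R$: the higher Tors are then only finite-length (not zero), and the change-of-rings collapse fails. I expect to handle this case by a depth-sensitivity analysis on $G_\mathfrak{P}$---whose homology is entirely finite-length---using the New Intersection Theorem to bound $p \ge \dim R_\mathfrak{P}$ in tandem with an Euler--Poincar\'e characteristic computation in the residue field of $R_\mathfrak{P}$, combined with the already established equality $\pdim_{A_\fp} M_\fp = \dim A_\fp \ge c$, to extract the matching lower bound $\dim R_\mathfrak{P} \ge c$.
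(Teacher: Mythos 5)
Your non-maximal case follows the paper's own route (Tor-vanishing at a non-maximal prime that is minimal in the support, the change-of-rings identification $\pdim_{R_{\mathfrak P}}(R\otimes_AM)_{\mathfrak P}=\pdim_{A_\fp}M_\fp$ via a minimal resolution, then Auslander--Buchsbaum plus the intersection theorem), but your ``key preliminary step'' is not valid as stated. Producing a prime $\mathfrak P'\subsetneq\mathfrak P$ of $R$ lying over a given $\fp'\subsetneq\fp$ is the going-\emph{down} property, not going-up (going-up lifts larger primes above a given one), and going-down fails for general integral extensions --- this failure is exactly the phenomenon of Example~\ref{ex:codim-failure} and the reason the lemma needs its Tor hypothesis at all. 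So the claim that $\fp=\mathfrak P\cap A$ is minimal over $\operatorname{ann}_A M$ is unjustified, and with it your conclusion $\pdim_{A_\fp}M_\fp=\dim A_\fp\ge c$. The paper sidesteps this: it never asserts $\fp$ is minimal, but chooses a minimal prime $\fp'\subseteq\fp$ of $\supp_AM$ and uses
\[
\pdim_{A_\fp}M_\fp\ \ge\ \pdim_{A_{\fp'}}M_{\fp'}\ =\ \dim A_{\fp'}\ \ge\ \codim_AM,
\]
since localization does not increase projective dimension and $M_{\fp'}$ has finite length. Substituting this step repairs your argument in the non-maximal case.

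The second gap is the case where $\mathfrak P$ is the maximal ideal, which you leave as a sketch, and the tools you propose point the wrong way: the New Intersection Theorem applied to $G_{\mathfrak P}$ gives the \emph{upper} bound $\dim R_{\mathfrak P}\le\pdim_AM$, not the lower bound $\dim R_{\mathfrak P}\ge c$ that is required, and it is not clear how an Euler-characteristic computation would supply it, since the change-of-rings collapse is unavailable there. In the paper this case is a one-line triviality: for the maximal ideal one has $\height\fq=\dim R=\dim A\ge\codim_AM$, using only that $A\hookrightarrow R$ is integral. So as written the proposal has genuine gaps in both the preliminary reduction and the maximal-ideal case, though both are fixable and the remainder coincides with the paper's proof.
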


\begin{proof}
One has to only to verify that $\codim_R(R\otimes_AM)\ge \codim_AM$, that is to say  
\[
\height \fq \ge \codim_AM \quad\text{for all $\fq$ in $\supp_R(R\otimes_AM)$.}
\]

Let $f\colon \spec R \to \spec A$ be the map induced by $A\to R$. Then
\[
\supp_R(R\otimes_AM) = f^{-1}(\supp_AM)\,.
\]
Fix $\fq$ minimal in $f^{-1}(\supp_AM)$. If $\fq$ is the maximal ideal of $R$, then 
\[
\height \fq = \dim R=\dim A\ge \codim_AM\,,
\]
as desired. We can thus assume that $\fq$ is not maximal in $R$. 

Set $\fp\colonequals \fq\cap A$; this is in $\supp_AM$ and so contains a minimal element, say $\fp'$, in $\supp_AM$. It suffices to verify
\[
\height \fq \ge  \height \fp'\,.
\]
Since $\fq$ is not maximal in $\spec R$, the hypotheses yield
\[
\Tor^A_i(R,M)_\fq \cong \Tor^{A_\fp}_i(R_\fq,M_\fp) = 0 \quad\text{for $i\ge 1$.}
\]
Thus the $R_\fq$-module $(R\otimes_AM)_\fq$ has finite projective dimension; it is also of nonzero finite length, because $\fq$ is minimal in $\supp_R(R\otimes_AM)$. This gives the first and the second equalities below:
\begin{align*}
\dim R_\fq 
	&= \depth R_\fq \\
	&= \pdim_{R_\fq}(R\otimes_AM)_\fq \\
	&=\pdim_{R_\fq}(R_\fq\otimes_{A_\fp} M_\fp)\\
	&= \pdim_{A_\fp}M_\fp \\
	&\ge \pdim_{A_{\fp'}}M_{\fp'}\\
	&= \dim A_{\fp'}\,.
\end{align*}
The fourth one holds as $A_\fp \to R_\fq$ is local and, as $A_\fp$-modules, $R_\fq$ and $M_\fp$ are Tor-independent. The inequality is clear. The last equality holds hold because $M_{\fp'}$ has finite length, since $\fp'$ is minimal in $\supp_AM$. Thus $\height \fq \ge \height \fp'$.
\end{proof}

\begin{example}
\label{ex:codim-failure}
Let $R\colonequals k[x,y,z]/(x,y)\cap (z)$ and $A$ the subring $k[x+z,y]$. Then for $M\colonequals A/yA$ one has 
\[
\codim_AM = 1 \quad\text{whereas}\quad \codim_A(R\otimes_AM) = \codim_R(R/yR)=0\,.
\]
In geometric language, $\spec(R)$ is the union of a line and a plane, both passing through the origin, $\spec(A)$ is a plane, and $\spec(R) \to \spec(A)$ any linear map that is finite (any generic such map would do).  Then for one ``bad" line in $\spec(A)$, its inverse image in $\spec(R)$ is a union of the exceptional line in $\spec(R)$ and another line passing through the origin. This gives an example where the codimension in $A$ is 1 but the codimension in $R$ drops to $0$.
\end{example}

\begin{bibdiv}
\begin{biblist}

\bib{Boij/Soderberg:2008}{article}{
   author={Boij, Mats},
   author={S\"{o}derberg, Jonas},
   title={Graded Betti numbers of Cohen-Macaulay modules and the
   multiplicity conjecture},
   journal={J. Lond. Math. Soc. (2)},
   volume={78},
   date={2008},
   number={1},
   pages={85--106},
   issn={0024-6107},
   review={\MR{2427053}},
   doi={10.1112/jlms/jdn013},
}

\bib{Boij/Soderberg:2012}{article}{
   author={Boij, Mats},
   author={S\"{o}derberg, Jonas},
   title={Betti numbers of graded modules and the multiplicity conjecture in
   the non-Cohen-Macaulay case},
   journal={Algebra Number Theory},
   volume={6},
   date={2012},
   number={3},
   pages={437--454},
   issn={1937-0652},
   review={\MR{2966705}},
   doi={10.2140/ant.2012.6.437},
}

\bib{Bruns/Herzog:1998}{book}{
   author={Bruns, Winfried},
   author={Herzog, J{\"u}rgen},
   title={Cohen-Macaulay rings},
   series={Cambridge Studies in Advanced Mathematics},
   volume={39},
   edition={2},
   publisher={Cambridge University Press, Cambridge},
   date={1998},
   pages={xii+403},
   isbn={0-521-41068-1},
   review={\MR{1251956}},
}

\bib{Eisenbud/Erman:2017}{article}{
   author={Eisenbud, David},
   author={Erman, Daniel},
   title={Categorified duality in Boij-S\"{o}derberg theory and invariants of
   free complexes},
   journal={J. Eur. Math. Soc. (JEMS)},
   volume={19},
   date={2017},
   number={9},
   pages={2657--2695},
   issn={1435-9855},
   review={\MR{3692884}},
   doi={10.4171/JEMS/725},
}

\bib{Eisenbud/Schreyer:2010}{article}{
   author={Eisenbud, David},
   author={Schreyer, Frank-Olaf},
   title={Cohomology of coherent sheaves and series of supernatural bundles},
   journal={J. Eur. Math. Soc. (JEMS)},
   volume={12},
   date={2010},
   number={3},
   pages={703--722},
   issn={1435-9855},
   review={\MR{2639316}},
   doi={10.4171/JEMS/212},
}

\bib{Eisenbud/Schreyer:2009}{article}{
   author={Eisenbud, David},
   author={Schreyer, Frank-Olaf},
   title={Betti numbers of graded modules and cohomology of vector bundles},
   journal={J. Amer. Math. Soc.},
   volume={22},
   date={2009},
   number={3},
   pages={859--888},
   issn={0894-0347},
   review={\MR{2505303}},
   doi={10.1090/S0894-0347-08-00620-6},
}

\bib{Eisenbud/Schreyer:2003}{article}{
   author={Eisenbud, David},
   author={Schreyer, Frank-Olaf},
   title={Resultants and Chow forms via exterior syzygies},
   note={With an appendix by Jerzy Weyman},
   journal={J. Amer. Math. Soc.},
   volume={16},
   date={2003},
   number={3},
   pages={537--579},
   issn={0894-0347},
   review={\MR{1969204}},
   doi={10.1090/S0894-0347-03-00423-5},
}

\bib{Foxby/Iyengar:2001}{article}{
   author={Foxby, Hans-Bj\o rn},
   author={Iyengar, Srikanth},
   title={Depth and amplitude for unbounded complexes},
   conference={
      title={Commutative algebra},
      address={Grenoble/Lyon},
      date={2001},
   },
   book={
      series={Contemp. Math.},
      volume={331},
      publisher={Amer. Math. Soc., Providence, RI},
   },
   date={2003},
   pages={119--137},
   review={\MR{2013162}},
   doi={10.1090/conm/331/05906},
}

\bib{Hartshorne:1970}{book}{
   author={Hartshorne, Robin},
   title={Ample subvarieties of algebraic varieties},
   series={Lecture Notes in Mathematics, Vol. 156},
   note={Notes written in collaboration with C. Musili},
   publisher={Springer-Verlag, Berlin-New York},
   date={1970},
   pages={xiv+256},
   review={\MR{0282977}},
}

\bib{Hartshorne:1977}{book}{
   author={Hartshorne, Robin},
   title={Algebraic geometry},
   series={Graduate Texts in Mathematics, No. 52},
   publisher={Springer-Verlag, New York-Heidelberg},
   date={1977},
   pages={xvi+496},
   isbn={0-387-90244-9},
   review={\MR{0463157}},
}

\bib{Herzog/Kuhl:1984}{article}{
   author={Herzog, J.},
   author={K\"{u}hl, M.},
   title={On the Betti numbers of finite pure and linear resolutions},
   journal={Comm. Algebra},
   volume={12},
   date={1984},
   number={13-14},
   pages={1627--1646},
   issn={0092-7872},
   review={\MR{743307}},
   doi={10.1080/00927878408823070},
}

\bib{Hochster/Huneke:2100}{book}{
author={Hochster, Melvin},
author={Huneke, Craig},
title={Tight closure in equal characteristic zero},
status={in preparation},
}

\bib{Huneke/Swanson:2006}{book}{
   author={Huneke, Craig},
   author={Swanson, Irena},
   title={Integral closure of ideals, rings, and modules},
   series={London Mathematical Society Lecture Note Series},
   volume={336},
   publisher={Cambridge University Press, Cambridge},
   date={2006},
   pages={xiv+431},
   isbn={978-0-521-68860-4},
   isbn={0-521-68860-4},
   review={\MR{2266432}},
}

\bib{Iyengar:1999}{article}{
   author={Iyengar, S.},
   title={Depth for complexes, and intersection theorems},
   journal={Math. Z.},
   volume={230},
   date={1999},
   number={3},
   pages={545--567},
   issn={0025-5874},
   review={\MR{1680036}},
   doi={10.1007/PL00004705},
}

\bib{Iyengar/Ma/Walker:2022a}{article}{
   author={Iyengar, Srikanth B.},
   author={Ma, Linquan},
   author={Walker, Mark E.},
   title={Multiplicities and Betti numbers in local algebra via lim Ulrich
   points},
   journal={Algebra Number Theory},
   volume={16},
   date={2022},
   number={5},
   pages={1213--1257},
   issn={1937-0652},
   review={\MR{4471041}},
   doi={10.2140/ant.2022.16.1213},
}

\bib{Ma:2023}{article}{
   author={Ma, Linquan},
   title={Lim Ulrich sequences and Lech's conjecture},
   journal={Invent. Math.},
   volume={231},
   date={2023},
   number={1},
   pages={407--429},
   issn={0020-9910},
   review={\MR{4526825}},
   doi={10.1007/s00222-022-01149-2},
}

\bib{Miller/Speyer:2008}{article}{
   author={Miller, Ezra},
   author={Speyer, David E.},
   title={A Kleiman-Bertini theorem for sheaf tensor products},
   journal={J. Algebraic Geom.},
   volume={17},
   date={2008},
   number={2},
   pages={335--340},
   issn={1056-3911},
   review={\MR{2369089}},
   doi={10.1090/S1056-3911-07-00479-1},
}

\bib{Peskine/Szpiro:1974}{article}{
   author={Peskine, Christian},
   author={Szpiro, Lucien},
   title={Syzygies et multiplicit\'{e}s},
   language={French},
   journal={C. R. Acad. Sci. Paris S\'{e}r. A},
   volume={278},
   date={1974},
   pages={1421--1424},
   issn={0302-8429},
   review={\MR{349659}},
}

\bib{Roberts:1987}{article}{
   author={Roberts, Paul},
   title={Le th\'{e}or\`eme d'intersection},
   language={French, with English summary},
   journal={C. R. Acad. Sci. Paris S\'{e}r. I Math.},
   volume={304},
   date={1987},
   number={7},
   pages={177--180},
   issn={0249-6291},
   review={\MR{880574}},
}

\end{biblist}
\end{bibdiv}

\end{document}